\newcommand{\vecx}{{\vec X}}
\newcommand{\veca}{{\vec a}}
\newcommand{\vecb}{{\vec b}}
\newcommand{\veci}{{\vec i}}
\newcommand{\mult}{{\mathrm {mult}}}
\newcommand{\A}{{\mathsf {A}}}
\newcommand{\B}{{\mathsf {B}}}
\newcommand{\C}{{\mathsf {C}}}
\newcommand{\X}{{\mathsf {X}}}
\newcommand{\Y}{{\mathsf {Y}}}
\newcommand{\ZZ}{{\mathsf {Z}}}
\newcommand{\U}{{\mathsf {U}}}
\newcommand{\DW}{{\mathrm {DW}}}
\newcommand{\GUV}{{\mathrm {GUV}}}
\newcommand{\TS}{{\mathrm {TS}}}
\newcommand{\RSW}{{\mathrm {RSW}}}
\newcommand{\HIGH}{{\mathrm {HIGH}}}
\begin{document}
%--------------
%% preamble.tex
%% this should be included with a command like
%% \input{preamble.tex}
%% \lecture{1}{September 4, 1996 }{Daniel A. Spielman}{name
%%  of poor scribe}

\hbadness=10000
\vbadness=10000

\newtheorem{theorem}{Theorem}
\newtheorem{corollary}[theorem]{Corollary}
\newtheorem{lemma}[theorem]{Lemma}
\newtheorem{observation}[theorem]{Observation}
\newtheorem{proposition}[theorem]{Proposition}
\newtheorem{definition}[theorem]{Definition}
\newtheorem{claim}[theorem]{Claim}
\newtheorem{fact}[theorem]{Fact}
\newtheorem{assumption}[theorem]{Assumption}

\newcommand{\qed}{\rule{7pt}{7pt}}
\newcommand{\dis}{\mathop{\mbox{\rm d}}\nolimits}
\newcommand{\per}{\mathop{\mbox{\rm per}}\nolimits}
\newcommand{\area}{\mathop{\mbox{\rm area}}\nolimits}
\newcommand{\cw}{\mathop{\rm cw}\nolimits}
\newcommand{\ccw}{\mathop{\rm ccw}\nolimits}
\newcommand{\DIST}{\mathop{\mbox{\rm DIST}}\nolimits}
\newcommand{\OP}{\mathop{\mbox{\it OP}}\nolimits}
\newcommand{\OPprime}{\mathop{\mbox{\it OP}^{\,\prime}}\nolimits}
\newcommand{\ihat}{\hat{\imath}}
\newcommand{\jhat}{\hat{\jmath}}
\newcommand{\abs}[1]{\mathify{\left| #1 \right|}}

\newenvironment{proof}{\noindent{\bf Proof}\hspace*{1em}}{\qed\bigskip}
\newenvironment{proof-sketch}{\noindent{\bf Sketch of Proof}\hspace*{1em}}{\qed\bigskip}
\newenvironment{proof-idea}{\noindent{\bf Proof Idea}\hspace*{1em}}{\qed\bigskip}
\newenvironment{proof-of-lemma}[1]{\noindent{\bf Proof of Lemma #1}\hspace*{1em}}{\qed\bigskip}
\newenvironment{proof-attempt}{\noindent{\bf Proof Attempt}\hspace*{1em}}{\qed\bigskip}
\newenvironment{proofof}[1]{\noindent{\bf Proof
of #1:}}{\qed\bigskip}
\newenvironment{remark}{\noindent{\bf Remark}\hspace*{1em}}{\bigskip}

% \makeatletter
% \@addtoreset{figure}{section}
% \@addtoreset{table}{section}
% \@addtoreset{equation}{section}
% \makeatother

\newcommand{\FOR}{{\bf for}}
\newcommand{\TO}{{\bf to}}
\newcommand{\DO}{{\bf do}}
\newcommand{\WHILE}{{\bf while}}
\newcommand{\AND}{{\bf and}}
\newcommand{\IF}{{\bf if}}
\newcommand{\THEN}{{\bf then}}
\newcommand{\ELSE}{{\bf else}}

\makeatletter
\def\fnum@figure{{\bf Figure \thefigure}}
\def\fnum@table{{\bf Table \thetable}}
\long\def\@mycaption#1[#2]#3{\addcontentsline{\csname
  ext@#1\endcsname}{#1}{\protect\numberline{\csname
  the#1\endcsname}{\ignorespaces #2}}\par
  \begingroup
    \@parboxrestore
    \small
    \@makecaption{\csname fnum@#1\endcsname}{\ignorespaces #3}\par
  \endgroup}
\def\mycaption{\refstepcounter\@captype \@dblarg{\@mycaption\@captype}}
\makeatother

\newcommand{\figcaption}[1]{\mycaption[]{#1}}
\newcommand{\tabcaption}[1]{\mycaption[]{#1}}
\newcommand{\head}[1]{\chapter[Lecture \##1]{}}
\newcommand{\mathify}[1]{\ifmmode{#1}\else\mbox{$#1$}\fi}
\newcommand{\bigO}O
\newcommand{\set}[1]{\mathify{\left\{ #1 \right\}}}
\def\half{\frac{1}{2}}
\def\polylog{{\mathrm{polylog}}}

% Coding theory addenda

\newcommand{\enc}{{\sf Enc}}
\newcommand{\dec}{{\sf Dec}}
\newcommand{\E}{{\rm Exp}}
\newcommand{\Var}{{\rm Var}}
\newcommand{\Z}{{\mathbb Z}}
\newcommand{\F}{{\mathbb F}}
\newcommand{\K}{{\mathbb K}}
\newcommand{\integers}{{\mathbb Z}^{\geq 0}}
\newcommand{\R}{{\mathbb R}}
\newcommand{\Q}{{\cal Q}}
\newcommand{\eqdef}{{\stackrel{\rm def}{=}}}
\newcommand{\from}{{\leftarrow}}
\newcommand{\vol}{{\rm Vol}}
\newcommand{\poly}{{\rm {poly}}}
\newcommand{\ip}[1]{{\langle #1 \rangle}}
\newcommand{\wt}{{\rm {wt}}}
\renewcommand{\vec}[1]{{\mathbf #1}}
\newcommand{\mspan}{{\rm span}}
\newcommand{\rs}{{\rm RS}}
\newcommand{\RM}{{\rm RM}}
\newcommand{\Had}{{\rm Had}}
\newcommand{\calc}{{\cal C}}
\def\prob{{\mathbf{Pr}}}

\def\omm{ \{0,1\} }
\def\eps{ \epsilon }

\newcommand{\fig}[4]{
        \begin{figure}
        \setlength{\epsfysize}{#2}
        \vspace{3mm}
        \centerline{\epsfbox{#4}}
        \caption{#3} \label{#1}
        \end{figure}
        }

\newcommand{\ord}{{\rm ord}}
\def\blfootnote{\xdef\@thefnmark{}\@footnotetext}
\providecommand{\norm}[1]{\lVert #1 \rVert}
\newcommand{\embed}{{\rm Embed}}
\newcommand{\qembed}{\mbox{$q$-Embed}}
\newcommand{\calh}{{\cal H}}
\newcommand{\lp}{{\rm LP}}

\setlength{\parindent}{0in}
\setlength{\parskip}{\medskipamount}

\title{Extensions to the Method of Multiplicities, with applications
to Kakeya Sets and Mergers}
%Improved Mergers, Near-optimal Kakeya bounds and the Method of Multiplicities}

\author{%
Zeev Dvir\thanks{IAS. {\tt zeev.dvir@gmail.com}. Research
partially supported by NSF Grant CCF-0832797 (Expeditions in
computing grant) and by NSF Grant DMS-0835373 (pseudorandomness
grant).} \and Swastik Kopparty
\thanks{MIT CSAIL. {\tt swastik@mit.edu}.
Research supported in part by NSF Award CCF 0829672.}
\and Shubhangi Saraf \thanks{MIT
CSAIL. {\tt shibs@mit.edu}.
Research supported in part by NSF Award CCF 0829672.}
\and Madhu Sudan\thanks{MIT CSAIL. {\tt
madhu@mit.edu}.
Research supported in part by NSF Award CCF 0829672.}%
}

\maketitle

%%%% body goes in here %%%%

%\footnotetext{\emph{Key words and phrases.} Kakeya, Finite Fields, Polynomial Method}

\begin{abstract}
We extend the ``method of multiplicities''
to get the following
results, of interest in combinatorics and randomness extraction.
\begin{enumerate}
\item
We show that every Kakeya set (a set of points that
contains a line in every direction) in $\F_q^n$
must be of size at least $q^n/2^n$.
This bound is tight to within a $2 + o(1)$ factor for
every $n$ as $q \to \infty$, compared to previous
bounds that were off by exponential factors in $n$.
\item
We give improved randomness extractors and ``randomness mergers''.
Mergers are seeded functions that take as input $\Lambda$ (possibly
correlated) random variables in $\{0,1\}^N$ and a short random
seed and output a single random variable in $\{0,1\}^N$ that is
statistically close to having entropy $(1-\delta) \cdot N$ when
one of the $\Lambda$ input variables is distributed uniformly. The
seed we require is only $(1/\delta)\cdot \log \Lambda$-bits long,
which significantly improves upon previous construction of
mergers. \item Using our new mergers, we show how to construct
randomness extractors that use logarithmic length seeds while
extracting $1 - o(1)$ fraction of the min-entropy of the source.
Previous results could extract only a constant fraction of the
entropy while maintaining logarithmic seed length.
\end{enumerate}
The ``method of multiplicities", as used in prior work,
analyzed subsets of vector spaces over
finite fields by constructing somewhat low degree
interpolating
polynomials that vanish on every point in the subset {\em
with high multiplicity}.
The typical use of this method involved showing that the
interpolating polynomial also vanished on some points outside
the subset, and then used simple bounds on the number
of zeroes to complete the analysis. Our augmentation to this technique
is that we prove, under appropriate conditions,
that the interpolating polynomial vanishes {\em with high multiplicity}
outside the set. This novelty leads to significantly tighter analyses.

To develop the extended method of multiplicities we provide a number
of basic technical results
about multiplicity of zeroes of polynomials that may be of general
use. For instance, we strengthen the
Schwartz-Zippel lemma to show that the expected multiplicity
of zeroes of a non-zero degree $d$ polynomial at a random point in
$S^n$, for any finite subset $S$ of the underlying field, is at most
$d/|S|$ (a fact that does not seem to have been noticed in the
CS literature before).
\end{abstract}

\newpage

\section{Introduction}
\label{sec:intro}

The goal of this paper is to improve on an
algebraic method that has lately been applied, quite
effectively, to analyze
combinatorial parameters of subsets of vector spaces that
satisfy some given algebraic/geometric conditions.
This technique, which we refer to as
as the {\em polynomial method} (of combinatorics),
proceeds in three steps: Given the subset $K$ satisfying
the algebraic conditions, one first constructs a non-zero low-degree
polynomial that
vanishes on $K$. Next, one uses the algebraic
conditions on $K$ to show that the polynomial vanishes at other
points outside $K$ as well. Finally, one uses the fact that the
polynomial
is zero too often to derive bounds on the combinatorial
parameters of interest.
The polynomial method has seen utility in
the computer science literature in works on ``list-decoding''
starting with Sudan \cite{Sudan} and subsequent works.
Recently the method has been applied to analyze ``extractors''
by Guruswami, Umans, and Vadhan~\cite{GUV}.
Most relevant to this current paper are its applications to
lower bound the cardinality of
``Kakeya sets'' by Dvir~\cite{Dvir08}, and the subsequent constructions
of ``mergers'' and ``extractors'' by Dvir and Wigderson~\cite{DW08}.
(We will elaborate on some of these results shortly.)

The {\em method of multiplicities}, as we term it, may be
considered an extension of this method. In this extension
one constructs polynomials that vanish with {\em high multiplicity}
on the subset $K$. This requirement often forces one to use
polynomials of higher degree than in the polynomial method, but
it gains in the second step by using the high multiplicity of
zeroes to conclude ``more easily'' that the polynomial is zero
at other points. This typically leads to a tighter analysis
of the combinatorial parameters of interest. This method has
been applied widely in list-decoding starting with the work of
Guruswami and Sudan~\cite{GuSu} and continuing through many subsequent
works, most significantly in the works of
Parvaresh and Vardy~\cite{PV} and
Guruswami and Rudra~\cite{GuRu} leading to rate-optimal list-decodable
codes.
Very recently this method was also applied to improve the lower bounds
on the size of ``Kakeya sets'' by Saraf and Sudan~\cite{SaSu}.

The main contribution of this paper is an extension to this
method, that we call the {\em extended method of multiplicities},
which develops this method (hopefully) fully to derive even
tighter bounds on the combinatorial parameters.
In our extension, we start as in the method of multiplicities
to construct a polynomial that vanishes with high multiplicity
on every point of $K$. But then we extend the second step where
we exploit the algebraic conditions to show that the polynomial
vanishes with {\em high multiplicity} on some points outside
$K$ as well. Finally we extend the third step to show that this
gives better bounds on the combinatorial parameters of interest.

By these extensions we derive nearly optimal lower bounds on the
size of Kakeya sets and qualitatively improved analysis of mergers
leading to new extractor constructions.
We also rederive algebraically a known bound on the list-size
in the list-decoding of Reed-Solomon codes. We describe these
contributions in detail next, before going on to describe
some of the technical observations used to derive the extended
method of multiplicities (which we believe are of independent
interest).

\subsection{Kakeya Sets over Finite Fields}

Let $\F_q$ denote the finite field of cardinality $q$.
A set $K \subseteq \F_q^n$ is said to be a {\em Kakeya
set} if it ``contains a line in every direction''.
In other words, for every ``direction'' $\vec b \in \F_q^n$ there
should exist an ``offset'' $\vec a \in \F_q^n$ such that
the ``line'' through $\vec a$ in direction $\vec b$, i.e.,
the set $\{\vec a + t \vec b | t \in \F_q\}$, is contained
in $K$. A question of interest in combinatorics/algebra/geometry,
posed originally by Wolff~\cite{Wolff},
is: ``What is the size of the smallest Kakeya set, for a
given choice of $q$ and $n$?''

The trivial upper bound on the size of a Kakeya set is $q^n$ and
this can be improved to roughly $\frac{1}{2^{n-1}}q^n$ (precisely
the bound is $\frac{1}{2^{n-1}}q^n + O(q^{n-1})$, see \cite{SaSu}
for a proof of this bound due to Dvir). An almost trivial lower
bound is $q^{n/2}$ (every Kakeya set ``contains'' at least $q^n$
lines, but there are at most $|K|^2$ lines that intersect $K$ at
least twice). Till recently even the exponent of $q$ was not known
precisely (see \cite{Dvir08} for details of work prior to 2008).
This changed with the result of \cite{Dvir08} (combined with an
observation of Alon and Tao) who showed that for every $n$, $|K|
\geq c_n q^n$, for some constant $c_n$ depending only on $n$.

Subsequently the work~\cite{SaSu} explored the growth of the constant
$c_n$ as a function of $n$. The result of \cite{Dvir08} shows that
$c_n \geq 1/n!$, and \cite{SaSu} improve this bound to show
that $c_n \geq 1/(2.6)^n$. This still leaves a gap between the
upper bound and the lower bound and we effectively close this gap.

\begin{theorem}
\label{thm:kakeya-crude}
If $K$ is a Kakeya set in $\F_q^n$ then
$|K| \geq \frac{1}{2^n}q^n.$
\end{theorem}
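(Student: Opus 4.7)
I plan to apply the extended method of multiplicities in three steps. Fix a large integer $m$ (which will be sent to infinity at the end) and a degree parameter $d$ to be chosen. By a standard dimension count, since vanishing with multiplicity $m$ at a point imposes $\binom{m+n-1}{n}$ linear conditions on the $\binom{d+n}{n}$-dimensional space of polynomials of degree at most $d$, a non-zero polynomial $P\in \F_q[X_1,\dots,X_n]$ of degree at most $d$ vanishing on all of $K$ with multiplicity $m$ exists whenever $\binom{d+n}{n}>|K|\binom{m+n-1}{n}$. Let $d':=\deg P$ and let $P_{d'}$ denote its (non-zero) top homogeneous part; note $d'\geq m$, since otherwise $P$ could not vanish with multiplicity $m$ at any point.

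The key step is to promote the classical ``$P_{d'}$ vanishes outside $K$'' conclusion to ``$P_{d'}$ vanishes with high multiplicity outside $K$'' by running the argument in parallel on every Hasse derivative of $P$. For each direction $\vec b\in \F_q^n\setminus\{\vec 0\}$ the Kakeya property furnishes an offset $\vec a_{\vec b}$ with $L_{\vec b}:=\{\vec a_{\vec b}+t\vec b:t\in \F_q\}\subseteq K$. For every multi-index $\vec j$ with $|\vec j|<m$, the Hasse derivative $P^{(\vec j)}$ vanishes on each point of $L_{\vec b}$ with multiplicity at least $m-|\vec j|$, so the univariate restriction $t\mapsto P^{(\vec j)}(\vec a_{\vec b}+t\vec b)$ has degree at most $d'-|\vec j|$ yet at least $q(m-|\vec j|)$ zeros counted with multiplicity. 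Whenever $|\vec j|<(qm-d')/(q-1)$ this restriction is therefore identically zero, and in particular its coefficient of $t^{d'-|\vec j|}$---which equals $P_{d'}^{(\vec j)}(\vec b)$---must vanish. Setting $M:=\lceil (qm-d')/(q-1)\rceil$, I conclude that $P_{d'}$ vanishes with multiplicity at least $M$ at \emph{every} $\vec b\in \F_q^n$ (for $\vec b=\vec 0$ this is automatic because $P_{d'}$ is homogeneous of degree $d'\geq M$).

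Finally I would invoke the strengthened Schwartz--Zippel inequality flagged in the abstract, which bounds the total multiplicity of zeros of a non-zero polynomial $Q$ by $\deg(Q)\cdot q^{n-1}$. Applied to $P_{d'}$ this gives $M\cdot q^n\leq d'\cdot q^{n-1}$, i.e.\ $Mq\leq d'$, and unwinding the definition of $M$ yields $d'(2q-1)\geq q^2m$, hence $d'\geq q^2m/(2q-1)$. Contrapositively, for any integer $d<q^2m/(2q-1)$ no such non-zero $P$ can exist, so $\binom{d+n}{n}\leq |K|\binom{m+n-1}{n}$. Choosing $d=\lfloor q^2m/(2q-1)\rfloor$ (adjusted downward by one if needed) and letting $m\to\infty$, the ratio $\binom{d+n}{n}/\binom{m+n-1}{n}$ tends to $(q^2/(2q-1))^n$, which is strictly greater than $q^n/2^n$, yielding the stated bound. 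The main obstacle is the second step: the classical polynomial method only yields that $P_{d'}(\vec b)=0$ (multiplicity one) outside $K$, whereas matching the $1/2^n$ constant requires multiplicity close to $m$, which is exactly what running the argument on every Hasse derivative buys; this extra multiplicity is useful only in conjunction with the sharper multiplicity-counting Schwartz--Zippel bound, and together these two ingredients form the ``extended method of multiplicities'' promised in the introduction.
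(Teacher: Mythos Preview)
Your proposal is correct and follows essentially the same argument as the paper's proof of the stronger Theorem~\ref{thm:kakeya} (which immediately yields Theorem~\ref{thm:kakeya-crude}): interpolate a polynomial vanishing on $K$ with multiplicity $m$, run the line-restriction argument on every Hasse derivative to show the top homogeneous part vanishes with high multiplicity at every point of $\F_q^n$, and conclude via the multiplicity Schwartz--Zippel lemma. The only difference is cosmetic: the paper fixes the target multiplicity $\ell$ on $\F_q^n$ in advance and chooses $m=2\ell-\ell/q$, $d=\ell q-1$ accordingly, whereas you fix $m$, derive the resulting multiplicity $M=\lceil(qm-d')/(q-1)\rceil$, and then optimize $d$; both parametrizations yield the same limiting bound $(q/(2-1/q))^n$.
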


Note that our bound is tight to within a $2+o(1)$ multiplicative factor as
long as $q = \omega(2^n)$ and in particular when $n = O(1)$ and $q \to
\infty$.

\subsection{Randomness Mergers and Extractors}

A general quest in the computational study of randomness is
the search for simple primitives that manipulate random variables
to convert their randomness into more useful forms. The exact notion of
utility varies with applications.
The most common notion is that of
``extractors'' that produce an output variable
that is distributed statistically close to uniformly on the range.
Other notions of interest include ``condensers'', ``dispersers'' etc.
One such object of study (partly because it is useful to construct
extractors) is a ``randomness merger''. A randomness merger
takes as input $\Lambda$, possibly correlated, random variables
$\mathsf A_1,\ldots, \mathsf A_\Lambda$, along
with a short uniformly random seed $\mathsf B$, which is independent of
$\mathsf A_1,\ldots, \mathsf A_\Lambda$, and ``merges'' the randomness of $\mathsf A_1,\ldots, \mathsf A_\Lambda$.
Specifically the output of the merger should be statistically close
to a high-entropy-rate source of randomness provided at least
one of the input variables $\mathsf A_1,\ldots, \mathsf A_\Lambda$ is uniform.

Mergers were first introduced by Ta-Shma \cite{TaShma96} in the
context of explicit constructions of extractors. A general
framework was given in \cite{TaShma96} that reduces the problem of
constructing good extractors into that of constructing good
mergers. Subsequently, in \cite{LRVW03}, mergers were used in a
more complicated manner to create extractors which were optimal to
within constant factors. The mergers of \cite{LRVW03} had a very
simple algebraic structure: the output of the merger was a random
linear combination of the blocks over a finite vector space. The
\cite{LRVW03} merger analysis was improved in \cite{DS07} using
the connection to the finite field Kakeya problem and the (then)
state of the art results on Kakeya sets.

The new technique in \cite{Dvir08} inspired  Dvir and Wigderson
\cite{DW08} to give a very simple, algebraic, construction of a
merger which can be viewed as a derandomized version of the
\cite{LRVW03} merger. They associate the domain of each random
variable $\mathsf A_i$ with a vector space $\F_q^n$. With the
$\Lambda$-tuple of random variables $\mathsf A_1,\ldots, \mathsf A_\Lambda$,
they associate a curve $C:\F_q \to \F_q^n$ of degree $\leq \Lambda$
which `passes' through all the points $\mathsf A_1,\ldots, \mathsf
A_\Lambda$ (that is, the image of $C$ contains these points). They then
select a random point $u \in \F_q$ and output $C(u)$ as the
``merged" output. They show that if $q \geq \poly(\Lambda \cdot n)$ then
the output of the merger is statistically close to a distribution
of entropy-rate arbitrarily close to $1$ on $\F_q^n$.

While the polynomial (or at least linear) dependence of $q$ on $\Lambda$
is essential to the construction above, the requirement $q \geq
\poly(n)$ appears only in the analysis. In our work we remove this
restriction to show:

{\bf Informal Theorem [Merger]:} {\em For every $\Lambda,q$ the
output of the Dvir-Wigderson merger is close to a source of
entropy rate $1 - \log_q \Lambda$. In particular there exists an
explicit merger for $\Lambda$ sources (of arbitrary length) that
outputs a source with entropy rate $1 - \delta$ and has seed
length $(1/\delta)\cdot \log(\Lambda/\eps)$ for any error $\eps$.}

The above theorem (in its more formal form given in
Theorem~\ref{thm:themerger}) allows us to merge $\Lambda$ sources using
seed length which is only logarithmic in the number of sources and
does not depend entirely on the length of each source. Earlier
constructions of mergers required the seed to depend either
linearly on the number of blocks \cite{LRVW03, Zuc07} or to depend
also on the {\em length} of each block \cite{DW08}. \footnote{The
result we refer to in \cite[Theorem 5.1]{Zuc07} is actually a condenser (which
is stronger than a merger).}

One consequence of our improved merger construction is an improved
construction of extractors. Recall that a $(k,\epsilon)$-extractor
$E:\{0,1\}^n \times \{0,1\}^d \to \{0,1\}^m$ is a deterministic
function that takes any random variable $\X$ with min-entropy at
least $k$ over $\{0,1\}^n$ and an independent uniformly distributed
seed $\Y \in \{0,1\}^d$ and converts it to the random variable
$E(\X,\Y)$ that is $\epsilon$-close in statistical distance to a
uniformly distributed random variable over $\{0,1\}^m$.
Such an extractor is efficient if $E$ is polynomial time computable.

A diverse collection of efficient extractors are known in the
literature (see the survey \cite{Sha02} and the more recent
\cite{GUV,DW08} for references) and many applications have been
found for explicit extractor is various research areas spanning
theoretical computer science. Yet all previous constructions lost
a linear fraction of the min-entropy of the source (i.e., acheived
$m = (1 -\epsilon)k$ for some constant $\epsilon > 0$) or used
super-logarithmic seed length ($d = \omega(\log n)$). We show that
our merger construction yields, by combining with several of the
prior tools in the arsenal of extractor constructions, an
extractor which extracts a $1 - \frac{1}{\mathrm{polylog(n)}}$
fraction of the minentropy of the source, while still using
$O(\log n)$-length seeds. We now state our extractor result in an
informal way (see Theorem~\ref{thm-final-extractor} for the formal
statement).

{\bf Informal Theorem [Extractor]:} {\em There exists an explicit
$(k, \epsilon)$-extractor for all min-entropies $k$ with $O(\log n)$ seed,
entropy loss $O(k/ \polylog(n))$ and error $ \eps =
1/\polylog(n)$, where the powers in the $\polylog(n)$ can be
arbitrarily high constants.}

\subsection{List-Decoding of Reed-Solomon Codes}

The Reed-Solomon list-decoding problem is the following: Given a
sequence of points $$(\alpha_1,\beta_1),\ldots,(\alpha_n,\beta_n)
\in \F_q\times \F_q,$$ and parameters $k$ and $t$, find the list
of all polynomials $p_1,\ldots,p_{L}$ of degree at most $k$ that
agree with the given set of points on $t$ locations, i.e., for
every $j \in \{1,\ldots,L\}$ the set $\{ i | p_j(\alpha_i) =
\beta_i\}$ has at least $t$ elements. The associated combinatorial
problem is: How large can the list size, $L$, be for a given
choice of $k,t,n,q$ (when maximized over all possible set of
distinct input points)?

A somewhat nonstandard, yet reasonable, interpretation of the
list-decoding algorithms of \cite{Sudan,GuSu} is that they give
algebraic proofs, by the polynomial method and the method of
multiplicities, of known combinatorial upper bounds on the list
size, when $t > \sqrt{kn}$. Their proofs {\em happen} also to
be algorithmic and so lead to algorithms to find a list of
all such polynomials.

However, the bound given on the list size in the above works does not
match the best known combinatorial bound. The best known bound
to date seems to be that of Cassuto and Bruck~\cite{CaBr} who show that,
letting $R= k/n$ and $\gamma = t/n$, if  $\gamma^2 > R$, then the list size $L$ is bounded by
$O( \frac{\gamma}{\gamma^2-R})$ (in contrast, the Johnson bound
and the analysis of \cite{GuSu} gives a list size bound of $O(\frac{1}{\gamma^2 - R})$,
which is asymptotically worse for, say, $\gamma = (1+ O(1))\sqrt{R}$ and $R$ tending to $0$).
In Theorem~\ref{thm:rsbound} we recover the bound of~\cite{CaBr} using
our extended method of multiplicities.

%Since this result merely gives an alternate proof of a known result
%(and has no algorithmic consequences that we are aware of) we defer
%the statement and proof to the appendix.

\subsection{Technique: Extended method of multiplicities}

The common insight to all the above improvements is that
the extended method of multiplicities can be applied to
each problem to improve the parameters. Here we attempt
to describe the technical novelties in the
development of the extended method of multiplicities.

For concreteness, let us take the case of the Kakeya set
problem. Given a set $K \subseteq \F_q^n$, the method first
finds a non-zero polynomial $P \in \F_q[X_1,\ldots,X_n]$ that vanishes
with high multiplicity $m$ on each point of $K$. The next step is
to prove that $P$ vanishes with fairly high multiplicity $\ell$ at
every point in $\F_q^n$ as well. This step turns out to be somewhat
subtle (and is evidenced by the fact that the exact relationship
between $m$ and $\ell$ is not simple).
Our analysis here crucially uses the fact that the (Hasse)
derivatives of the polynomial $P$, which are the central to
the notion of multiplicity of roots,
are themselves polynomials,
and also vanish with high multiplicity at points in $K$.
This fact does not seem to have been
needed/used in prior works and is central to ours.

A second important technical novelty arises in the final
step of the method of multiplicities, where
we need to conclude that if the degree of $P$ is ``small'',
then $P$ must be identically zero. Unfortunately in our application
the degree of $P$ may be much larger than $q$ (or $nq$, or even
$q^n$). To prove that
it is identically zero we need to use the fact that $P$ vanishes
{\em with high multiplicity} at every point in $\F_q^n$, and this
requires some multiplicity-enhanced version of the standard Schwartz-Zippel
lemma. We prove such a strengthening,
showing that the expected multiplicity of zeroes of a degree $d$
polynomial (even when $d \gg q$) at a random point in $\F_q^n$
is at most $d/q$ (see Lemma~\ref{lem:schwartz}).
Using this lemma, we are able to derive much better benefits
from the ``polynomial method''. Indeed we feel that this allows
us to fully utilize the power of the polynomial ring $\F_q[\vec X]$
and are not limited by the power of the function space
mapping $\F_q^n$ to $\F_q$.

Putting these ingredients together, the analysis of the Kakeya
sets follows easily. The analysis of the mergers follows
a similar path and may be viewed as a ``statistical'' extension
of the Kakeya set analysis to ``curve'' based sets, i.e., here
we consider sets $S$ that have the property that for a noticeable
fraction points $\vec x \in \F_q^n$ there exists a low-degree
curve passing through $\vec x$ that has a noticeable fraction
of its points in $S$. We prove such sets must also be large and
this leads to the analysis of the Dvir-Wigderson merger.

\paragraph{Organization of this paper.}
In Section~\ref{sec:prelims} we define the notion of the multiplicity
of the roots of a polynomial, using the notion of the
Hasse derivative. We present some basic facts about multiplicities
and Hasse derivatives, and also present the multiplicity based version
of the Schwartz-Zippel lemma.
In Section~\ref{sec:kakeya} we present our lower bounds for Kakeya sets.
In Section~\ref{sec:kakeyacurves} we extend this analysis for ``curves'' and
for ``statistical'' versions of the Kakeya property. This leads
to our analysis of the Dvir-Wigderson merger in Section~\ref{sec:merger}.
We then show how to use our mergers to construct the novel extractors
in Section~\ref{sec:extractor}.
Finally, in Section~\ref{asec:rs}, we include the algebraic proof of the
list-size bounds for the list-decoding of Reed-Solomon codes.

\paragraph{Version history.}
This version of the paper adds a new section (Section~\ref{sec:extractor})
constructing extractors based on the mergers given in the previous
version of this paper (dated 15 January 2009).

\section{Preliminaries}
\label{sec:prelims}

In this section we formally define the notion of
``mutliplicity of zeroes'' along with the companion
notion of the ``Hasse derivative''. We also describe
basic properties of these notions, concluding with
the proof of the ``multiplicity-enhanced version'' of
the Schwartz-Zippel lemma.

\subsection{Basic definitions}

We start with some notation.
We use $[n]$ to denote the set $\{1,\ldots,n\}$.
For a vector $\veci = \langle i_1,\ldots,i_n\rangle$
of non-negative integers, its {\em weight}, denoted
$\wt(\veci)$, equals $\sum_{j=1}^n i_j$.

Let $\mathbb F$ be any field, and $\F_q$ denote the
finite field of $q$ elements.
For $\vecx = \langle X_1,\ldots,X_n \rangle$, let $\F[\vecx]$
be the ring of polynomials in $X_1,\ldots,X_n$ with coefficients
in $\F$.
For a polynomial $P(\vec X)$, we let $H_{P}(\vec X)$ denote the homogeneous part of
$P(\vec X)$ of highest total degree.

For a vector of non-negative integers $\veci = \langle
i_1,\ldots,i_n \rangle$, let $\vecx^\veci$ denote the monomial
$\prod_{j= 1}^n X_{j}^{i_j} \in \F[\vecx]$. Note that the (total) degree of
this monomial equals $\wt(\veci)$.
For $n$-tuples of non-negative integers $\vec i$ and $\vec j$, we use
the notation
$${\vec i \choose \vec j} =
\prod_{k = 1}^n {i_k \choose j_k}.$$
Note that the coefficient of $\vec Z^{\vec i} \vec W^{\vec r - \vec i}$ in the expansion of $(\vec Z + \vec W)^{\vec r}$ equals ${ \vec r \choose \vec i }$.

\begin{definition}[(Hasse) Derivative]
For $P(\vec X) \in \F[\vec X]$ and
non-negative vector $\veci$, the $\veci$th {\em (Hasse) derivative} of
$P$, denoted $P^{(\vec i)}(\vec X)$,
is the coefficient of $\mathbf Z^{\veci}$ in the
polynomial $\tilde{P}(\vec X, \vec Z) \eqdef P(\vec X + \vec Z) \in \F[\vecx, \vec Z]$.
\end{definition}

Thus,
\begin{equation}
\label{eq:deridentity}
P(\vec X + \vec Z) = \sum_{\vec i} P^{(\veci)}(\vec X) \vec Z^{\vec i}.
\end{equation}

We are now ready to define the notion of the (zero-)multiplicity
of a polynomial at any given point.

\begin{definition}[Multiplicity]
For $P(\vec X) \in \F[\vec X]$ and $\vec a \in \F^n$,
the {\em multiplicity} of $P$ at $\veca \in \F^n$,
denoted $\mult(P,\veca)$, is the largest integer $M$ such that
for every
non-negative vector $\veci$ with $\wt(\veci) < M$, we have
$P^{(\veci)}(\veca) = 0$
(if $M$ may be taken
arbitrarily large, we set $\mult(P, \veca) = \infty$).
\end{definition}

Note that $\mult(P,\veca) \geq 0$ for every $\vec a$.
Also, $P(\vec a)=0$ if and only if $\mult(P,\veca) \geq 1$.

The above notations and definitions also extend naturally
to a tuple $P(\vec X) = \langle P_1(\vec X),\ldots,P_m(\vec X)
\rangle$ of polynomials with $P^{(\vec i)} \in \F[\vec X]^m$
denoting the vector
$\langle (P_1)^{(\vec i)},\ldots,(P_m)^{(\vec i)}\rangle$.
In particular, we define $\mult(P,\veca) = \min_{j \in [m]}
\{ \mult(P_j,\veca) \}$.

The definition of multiplicity above is similar to the
standard (analytic) definition of multiplicity with the difference
that the standard partial derivative has been replaced by the
Hasse derivative. The Hasse derivative is also a reasonably
well-studied
quantity (see, for example,~\cite[pages 144-155]{hirschfield})
and seems to have first appeared
in the CS literature
(without being explicitly referred to by this name)
in the work of Guruswami and Sudan~\cite{GuSu}.
It typically behaves like the
standard derivative, but with some key differences that make
it more useful/informative over finite fields. For completeness we
review basic properties of the Hasse derivative and multiplicity
in the following subsections.

\subsection{Properties of Hasse Derivatives}

The following proposition lists basic properties of the
Hasse derivatives. Parts (1)-(3) below are the same as
for the analytic derivative, while Part (4) is not!
Part (4) considers the derivatives of the derivatives
of a polynomial and shows a different relationship than
is standard for the analytic derivative. However crucial
for our purposes is that it shows that the $\vec j$th derivative
of the $\veci$th derivative is zero if (though not necessarily
only if)
the $(\veci + \vec j)$-th derivative is zero.

\begin{proposition}[Basic Properties of Derivatives]
\label{prop:prop}
Let $P(\vec X), Q(\vec X) \in \F[\vec X]^m$ and let $\vec i$, $\vec j$ be
vectors of nonnegative integers. Then:
\begin{enumerate}
%\item $P_d(\vecx) + Q_d(\vecx) = (P + Q)_d(\vecx)$.
\item $P^{(\veci)}(\vecx) + Q^{(\veci)}(\vecx) = (P+Q)^{(\veci)}(\vecx)$.
\item If $P$ is homogeneous of degree $d$, then $P^{(i)}$ is homogeneous of degree $d - \wt(i)$.
\item $(H_P)^{(\veci)}(\vecx) = H_{P^{(\veci)}}(\vecx)$
\item $\left(P^{(\veci)}\right)^{(\vec j)}(\vecx) = {\veci + \vec j \choose \veci}P^{(\mathbf{i+j})}(\vecx)$.
\end{enumerate}
\end{proposition}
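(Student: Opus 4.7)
The proof of all four parts flows from one organizing idea: everything about $P^{(\vec i)}$ is encoded in the single identity $P(\vec X + \vec Z) = \sum_{\vec i} P^{(\vec i)}(\vec X) \vec Z^{\vec i}$ in the polynomial ring $\F[\vec X, \vec Z]$, and matching coefficients of monomials in $\vec Z$ (or later in $\vec Y$ and $\vec Z$) on both sides of well-chosen expansions gives each claim directly.

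For Part (1), the plan is to apply the substitution $\vec X \mapsto \vec X + \vec Z$ to $P+Q$ and note that this substitution is $\F$-linear as an operator $\F[\vec X] \to \F[\vec X, \vec Z]$; comparing the $\vec Z^{\vec i}$-coefficients on the two sides of $(P+Q)(\vec X + \vec Z) = P(\vec X + \vec Z) + Q(\vec X + \vec Z)$ yields the identity. For Part (2), I would write the homogeneous $P$ as $\sum_{\wt(\vec r) = d} c_{\vec r} \vec X^{\vec r}$ and expand each $(\vec X + \vec Z)^{\vec r}$ via the multinomial identity $(\vec X + \vec Z)^{\vec r} = \sum_{\vec i \le \vec r} \binom{\vec r}{\vec i} \vec X^{\vec r - \vec i} \vec Z^{\vec i}$; the coefficient of $\vec Z^{\vec i}$ is then a sum of monomials $\vec X^{\vec r - \vec i}$ each of weight exactly $d - \wt(\vec i)$, which is the desired homogeneity. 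For Part (3), I would decompose $P = H_P + R$ with $\deg R < \deg P = d$, apply Part (1) to get $P^{(\vec i)} = (H_P)^{(\vec i)} + R^{(\vec i)}$, use Part (2) to see that $(H_P)^{(\vec i)}$ is homogeneous of degree $d - \wt(\vec i)$, and argue (by inspecting the monomial expansion as in Part (2), applied to each homogeneous component of $R$ separately) that $R^{(\vec i)}$ has total degree strictly less than $d - \wt(\vec i)$; thus $(H_P)^{(\vec i)}$ is exactly the top-degree homogeneous part of $P^{(\vec i)}$.

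The main obstacle, and the only part requiring a genuinely new trick, is Part (4). My plan here is to compute $P(\vec X + \vec Y + \vec Z)$ in two different associative groupings and equate coefficients of $\vec Y^{\vec i} \vec Z^{\vec j}$. Grouping as $P((\vec X + \vec Y) + \vec Z)$ and applying the defining identity twice gives
\begin{equation*}
P(\vec X + \vec Y + \vec Z) = \sum_{\vec j} P^{(\vec j)}(\vec X + \vec Y)\, \vec Z^{\vec j} = \sum_{\vec i, \vec j} \bigl(P^{(\vec j)}\bigr)^{(\vec i)}(\vec X)\, \vec Y^{\vec i}\, \vec Z^{\vec j}.
\end{equation*}
Grouping instead as $P(\vec X + (\vec Y + \vec Z))$, applying the identity once, and then expanding $(\vec Y + \vec Z)^{\vec k}$ by the multinomial formula (as recalled in the preliminaries, with coefficients $\binom{\vec k}{\vec i}$), after reindexing by $\vec k = \vec i + \vec j$ yields
\begin{equation*}
P(\vec X + \vec Y + \vec Z) = \sum_{\vec i, \vec j} \binom{\vec i + \vec j}{\vec i} P^{(\vec i + \vec j)}(\vec X)\, \vec Y^{\vec i}\, \vec Z^{\vec j}.
\end{equation*}
Equating the $\vec Y^{\vec i}\vec Z^{\vec j}$-coefficients gives the claim (using the symmetry $\binom{\vec i + \vec j}{\vec i} = \binom{\vec i + \vec j}{\vec j}$). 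The main delicate point to get right in this step is the bookkeeping of the double sum and the reindexing, but no characteristic-dependent issue arises because the Hasse derivative is defined directly as a coefficient extraction, not by dividing out a factorial.
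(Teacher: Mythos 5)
Your proof is correct and follows essentially the same route as the paper's: parts (1)--(3) are handled by direct coefficient comparison as the paper sketches, and part (4) is proved by the paper's exact trick of expanding $P(\vec X + \vec Y + \vec Z)$ via the two groupings $P((\vec X + \vec Y) + \vec Z)$ and $P(\vec X + (\vec Y + \vec Z))$ and matching coefficients of the mixed monomials. The only cosmetic difference is that your labeling of indices puts the derivative orders in the opposite slots, so you need the symmetry $\binom{\vec i + \vec j}{\vec i} = \binom{\vec i + \vec j}{\vec j}$ at the end, which you correctly note.
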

\begin{proof}

Items 1 and 2 are easy to check, and item 3 follows immediately
from them. For item 4, we expand $P(\vec X + \vec Z + \vec W)$ in
two ways. First expand
\begin{eqnarray*}
P(\vec X + (\vec Z + \vec W)) &=& \sum_{\vec k} P^{(\vec k)}(\vec
X) (\vec Z + \vec W)^{\vec k} \\
&=& \sum_{\vec k} \sum_{\vec i + \vec j = \vec k} P^{(\vec
k)}(\vec X) { \vec k \choose \vec i} \vec
Z^{\vec j} \vec W^{\vec i}\\
&=& \sum_{\vec i, \vec j} P^{(\vec i + \vec j)}(\vec X) {\vec i +
\vec j \choose \vec i} \vec Z^{\vec j} \vec W^{\vec i} .
\end{eqnarray*}
On the other hand, we may write
$$P((\vec X + \vec Z) + \vec W) = \sum_{\vec i} P^{(\vec i)}(\vec X + \vec Z) \vec W^{\vec i} = \sum_{\vec i} \sum_{\vec j} \left( P^{(\vec i)}\right)^{(\vec j)}(\vec X) \vec Z^{\vec j} \vec W^{\vec i}.$$
Comparing coefficients of $\vec Z^{\vec j} \vec W^{\vec i}$ on both sides, we get the result.
\end{proof}

%We say that $Q$ has a zero of multiplicity $m$ at $\veca \in \F^n$
%if $Q^{(\vec i)}(\veca) = 0$ for every non-negative vector
%$\vec i$ with $\wt(\veci) < m$.

\subsection{Properties of Multiplicities}

We now translate some of the properties of the Hasse
derivative into properties of the multiplicities.

\begin{lemma}[Basic Properties of multiplicities]
\label{lem:dermult}
If $P(\vec X) \in \F[\vec X]$ and $\vec a \in \F^n$ are such that $\mult(P, \vec a) = m$, then $\mult(P^{(\vec i)}, \vec a) \geq m - \wt(\vec i)$.
\end{lemma}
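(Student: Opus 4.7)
The plan is to directly apply Part (4) of Proposition~\ref{prop:prop}, which is precisely the identity that relates a higher Hasse derivative of a derivative to a single combined derivative. First I would dispose of the trivial case: if $\wt(\vec i) \geq m$, then $m - \wt(\vec i) \leq 0$ and the claim $\mult(P^{(\vec i)}, \vec a) \geq m - \wt(\vec i)$ holds automatically since multiplicities are always nonnegative.

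Assume then $\wt(\vec i) < m$. To show $\mult(P^{(\vec i)}, \vec a) \geq m - \wt(\vec i)$, it suffices (by the definition of multiplicity) to verify that $\bigl(P^{(\vec i)}\bigr)^{(\vec j)}(\vec a) = 0$ for every nonnegative integer vector $\vec j$ with $\wt(\vec j) < m - \wt(\vec i)$. Fix such a $\vec j$. By Proposition~\ref{prop:prop}, Part (4),
\[
\bigl(P^{(\vec i)}\bigr)^{(\vec j)}(\vec X) \;=\; \binom{\vec i + \vec j}{\vec i} \, P^{(\vec i + \vec j)}(\vec X).
\]
Now $\wt(\vec i + \vec j) = \wt(\vec i) + \wt(\vec j) < m$, and since $\mult(P, \vec a) = m$, the definition of multiplicity gives $P^{(\vec i + \vec j)}(\vec a) = 0$. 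Evaluating the displayed identity at $\vec X = \vec a$ then yields $(P^{(\vec i)})^{(\vec j)}(\vec a) = 0$, as needed.

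I don't expect any real obstacle here: the entire argument is an immediate unwinding of the two definitions together with the key algebraic identity Part (4). The one subtle point worth flagging is that we do not need to worry about the binomial coefficient $\binom{\vec i + \vec j}{\vec i}$ being zero or nonzero in the underlying field $\F$ (which can matter in positive characteristic), because we are only using the forward direction: if $P^{(\vec i + \vec j)}(\vec a) = 0$, then regardless of the value of the binomial coefficient, the product is zero. This one-sidedness is exactly the asymmetry highlighted in the remark preceding Proposition~\ref{prop:prop}, and it is what makes the lemma go through cleanly over arbitrary fields.
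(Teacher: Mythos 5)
Your proof is correct and matches the paper's argument exactly: apply Part~(4) of Proposition~\ref{prop:prop} (the paper cites it as ``item~3,'' a minor typo) and use the fact that $\wt(\vec i + \vec j) < m$ implies $P^{(\vec i+\vec j)}(\vec a)=0$. Your explicit handling of the trivial case $\wt(\vec i)\ge m$ and the remark about the binomial coefficient in positive characteristic are small, sound clarifications the paper leaves implicit.
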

\begin{proof}
By assumption, for any $\vec k$ with $\wt(\vec k) < m$, we have $P^{(\vec k)}(\vec a) = 0$.
Now take any $\vec j$ such that $\wt(\vec j) < m - \wt(\vec i)$.
By item 3 of Proposition~\ref{prop:prop}, $(P^{(\vec i)})^{(\vec j)}(\vec a) = {\vec i + \vec j \choose \vec i} P^{(\vec i + \vec j)}(\vec a)$. Since $\wt(\vec i + \vec j) = \wt(\vec i) + \wt(\vec j) < m$, we deduce that $(P^{(\vec i)})^{(\vec j)}(\vec a) = 0$.
Thus $\mult(P^{(\vec i)}, \vec a) \geq m - \wt(\vec i)$.
\end{proof}

We now discuss the behavior of multiplicities under composition of polynomial tuples.
Let $\vec X = (X_1, \ldots, X_n)$ and $\vec Y = (Y_1, \ldots, Y_\ell)$ be formal variables.
Let $P(\vec X) = (P_1(\vec X), \ldots, P_m(\vec X)) \in \F[\vec X]^m$ and $Q(\vec Y) = (Q_1(Y) ,\ldots, Q_n(Y)) \in \F[\vec Y]^n$. We define the
composition polynomial $P \circ Q(\vec Y) \in \F[\vec Y]^m$ to be the polynomial
$P(Q_1(\vec Y), \ldots, Q_n(\vec Y))$. In this situation we have the following proposition.

\begin{proposition}
\label{prop:multmult}
Let $P(\vec X), Q(\vec Y)$ be as above. Then for any $\vec a \in \F^{\ell}$,
$$ \mult(P \circ Q, \vec a) \geq \mult(P, Q(\vec a)) \cdot \mult(Q - Q(\vec a), \vec a).$$
In particular, since $\mult(Q - Q(\vec a), \vec a) \geq 1$, we have $\mult(P \circ Q, \vec a) \geq \mult(P, Q(\vec a))$.
\end{proposition}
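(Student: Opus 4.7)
The plan is to reduce to the scalar case (since multiplicities of tuples are defined as the minimum over components, and $P \circ Q$ is computed componentwise), and then mimic a Taylor-expansion argument. Let $m_1 = \mult(P, Q(\vec a))$ and $m_2 = \mult(Q - Q(\vec a), \vec a)$; so $m_2 \leq \mult(Q_k - Q_k(\vec a), \vec a)$ for every $k$. Assume WLOG that $P$ is a single polynomial. The goal is to show that every Hasse derivative of $P \circ Q$ of weight less than $m_1 m_2$ vanishes at $\vec a$.

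The key preliminary step is a multiplicativity lemma: if $f, g \in \F[\vec Y]$ with $\mult(f, \vec a) \geq k_1$ and $\mult(g, \vec a) \geq k_2$, then $\mult(fg, \vec a) \geq k_1 + k_2$. This follows by expanding $f(\vec a + \vec W) = \sum_{\wt(\vec i) \geq k_1} f^{(\vec i)}(\vec a) \vec W^{\vec i}$ and similarly for $g$, multiplying, and observing that $(fg)(\vec a + \vec W)$ has no monomial in $\vec W$ of weight less than $k_1 + k_2$; by the defining identity \eqref{eq:deridentity}, this says exactly that $(fg)^{(\vec k)}(\vec a) = 0$ for $\wt(\vec k) < k_1 + k_2$. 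Iterating, for any tuple $\vec i$ we get $\mult\bigl(\prod_k (Q_k - Q_k(\vec a))^{i_k}, \vec a\bigr) \geq \sum_k i_k \, m_2 = \wt(\vec i) \cdot m_2$.

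Now apply the identity \eqref{eq:deridentity} to $P$ with $\vec X := Q(\vec a)$ and $\vec Z := Q(\vec Y) - Q(\vec a)$:
\[
P(Q(\vec Y)) \;=\; \sum_{\vec i} P^{(\vec i)}(Q(\vec a)) \bigl(Q(\vec Y) - Q(\vec a)\bigr)^{\vec i}.
\]
All terms with $\wt(\vec i) < m_1$ vanish by definition of $m_1$, so the sum is over $\wt(\vec i) \geq m_1$. By the preceding paragraph, each summand $\bigl(Q(\vec Y) - Q(\vec a)\bigr)^{\vec i}$ has multiplicity at $\vec a$ at least $\wt(\vec i) \cdot m_2 \geq m_1 \cdot m_2$, and multiplying by the scalar $P^{(\vec i)}(Q(\vec a))$ does not decrease this.

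Finally, I need to know that $\mult(\cdot, \vec a)$ is superadditive under sums, which follows directly from item 1 of Proposition~\ref{prop:prop}: if every summand has all weight-$<M$ Hasse derivatives vanishing at $\vec a$, then so does their sum. This gives $\mult(P \circ Q, \vec a) \geq m_1 m_2$, as desired. I expect the only even mildly subtle point to be the multiplicativity lemma for products; everything else is mechanical manipulation of the Taylor-like identity \eqref{eq:deridentity}.
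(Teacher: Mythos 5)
Your proof is correct and follows essentially the same strategy as the paper's: Taylor-expand $P$ around the base point $Q(\vec a)$, use the vanishing of the low-weight Hasse derivatives of $P$ at $Q(\vec a)$, and control the multiplicity of the remaining terms via the high multiplicity of $Q - Q(\vec a)$ at $\vec a$. The paper phrases this by substituting $\vec Y := \vec a + \vec Z$ and inspecting $P(Q(\vec a + \vec Z))$ as a formal power-like expansion in $\vec Z$, packaging $Q(\vec a+\vec Z) - Q(\vec a)$ into a single tuple $h(\vec Z)$ all of whose monomials have weight at least $m_2$; the ``each occurrence of $h$ is raised to a power of weight at least $m_1$'' step plays exactly the role of your multiplicativity lemma, just argued inline instead of being isolated. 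Your version, which keeps $\vec Y$ and states multiplicativity ($\mult(fg,\vec a) \ge \mult(f,\vec a)+\mult(g,\vec a)$) and superadditivity for sums as explicit helper facts, is perhaps slightly cleaner to check, but it is the same decomposition and the same key computation.
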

\begin{proof}
Let $m_1 = \mult(P, Q(\vec a))$ and $m_2 = \mult(Q - Q(\vec a), \vec a)$.
Clearly $m_2 > 0$. If $m_1 = 0$ the result is obvious. Now assume $m_1 > 0$ (so that $P(Q(\vec a)) = 0$).
\begin{align*}
P(Q( \vec a + \vec Z)) &= P\left( Q(\vec a) + \sum_{\veci \neq 0} Q^{(\veci)}(\vec a)\vec Z^{\veci}\right)\\
&= P\left( Q(\vec a) + \sum_{\wt(\veci) \geq m_2} Q^{(\veci)}(\vec a)\vec Z^{\veci}\right) &\mbox{since $\mult(Q-Q(\vec a), \vec a) = m_2 > 0$}\\
&= P\left( Q(\vec a) + h(\vec Z)\right) &\mbox{where $h(\vec Z) = \sum_{\wt(\veci) \geq m_2} Q^{(\veci)}(\vec a) \vec Z^{\veci}$ }\\
&= P(Q(\vec a)) + \sum_{\vec j \neq 0} P^{(\vec j)}(Q(\vec a)) h(\vec Z)^{\vec j}\\
&= \sum_{\wt(\vec j) \geq m_1} P^{(\vec j)}(Q(\vec a)) h(\vec Z)^{\vec j}&\mbox{since $\mult(P, Q(\vec a)) = m_1 > 0$}
\end{align*}
Thus, since each monomial $\vec Z^{\veci}$ appearing in $h$ has $\wt(\veci) \geq m_2$, and each occurrence of
$h(\vec Z)$ in $P(Q(\veca + \vec Z))$ is raised to the power $\vec j$, with $\wt(\vec j) \geq m_1$, we conclude
that $P(Q(\veca + \vec Z))$ is of the form
$\sum_{\wt(\vec k) \geq m_1 \cdot m_2 } c_{\vec k} \vec Z^{\vec k}$.
This shows that $(P \circ Q)^{(\vec k)}(\vec a) = 0$ for each $\vec k$ with $\wt(\vec k) < m_1 \cdot m_2$, and
the result follows.
\end{proof}

\begin{corollary}
\label{corr:compose}
Let $P(\vec X) \in \F[\vec X]$ where $\vec X = (X_1, \ldots, X_n)$.
Let $\vec a, \vec b \in \F^n$.
Let $P_{\vec a, \vec b}(T)$ be the polynomial $P(\vec a + T \cdot \vec b) \in \F[T]$.
Then for any $t \in \F$, $$\mult(P_{\vec a, \vec b}, t) \geq \mult(P, \vec a + t \cdot \vec b).$$
\end{corollary}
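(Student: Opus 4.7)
The plan is to recognize this corollary as an immediate specialization of Proposition~\ref{prop:multmult} to the case of a linear (degree-one) curve. First I would set up the composition by defining $Q(T) \eqdef \vec a + T \cdot \vec b \in \F[T]^n$, viewed as an $n$-tuple of polynomials in the single variable $T$ (so in the notation of the proposition, $\ell = 1$ and $\vec Y = T$). By the very definition of $P_{\vec a, \vec b}$, one has the identity $P_{\vec a, \vec b}(T) = P(Q(T)) = (P \circ Q)(T)$, so the corollary becomes a statement about the multiplicity of a composition at the point $t \in \F$.

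Next I would invoke Proposition~\ref{prop:multmult} with $\vec a$ in the proposition replaced by $t$, yielding
\[
\mult(P \circ Q, t) \;\geq\; \mult(P, Q(t)) \cdot \mult(Q - Q(t), t).
\]
Since $Q(t) = \vec a + t \vec b$, the first factor on the right is exactly $\mult(P, \vec a + t \vec b)$, so it remains only to observe that the second factor is at least $1$. This is precisely the ``in particular'' clause of Proposition~\ref{prop:multmult}: the polynomial tuple $Q - Q(t)$ vanishes at $t$ by construction (its $j$th component equals $b_j(T - t)$), so $\mult(Q - Q(t), t) \geq 1$.

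Combining the two bounds gives $\mult(P_{\vec a, \vec b}, t) \geq \mult(P, \vec a + t \vec b) \cdot 1$, which is the desired inequality. There is no genuine obstacle here; the entire content is in Proposition~\ref{prop:multmult}, and this corollary simply packages the most-used case (lines through $\vec a$ in direction $\vec b$) for convenient reference in the Kakeya-set arguments of Section~\ref{sec:kakeya}.
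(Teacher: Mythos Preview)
Your proposal is correct and is exactly the paper's approach: define $Q(T) = \vec a + T\vec b \in \F[T]^n$ and apply Proposition~\ref{prop:multmult} (specifically its ``in particular'' clause) at the point $t$. The paper's proof is a one-line version of what you wrote.
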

\begin{proof}
Let $Q(T) = \vec a + T \vec b \in \F[T]^n$.
Applying the previous proposition to $P(\vec X)$ and $Q(T)$, we get the desired claim.
\end{proof}

\subsection{Strengthening of the Schwartz-Zippel Lemma}

We are now ready to state and prove the strengthening of the
Schwartz-Zippel lemma. In the standard form this lemma
states that the probability that $P(\vec a) = 0$ when
$\veca$ is drawn uniformly at random from $S^n$ is at most
$d/|S|$, where $P$ is a non-zero degree $d$ polynomial and
$S \subseteq \F$ is a finite set.
Using $\min\{1,\mult(P,\vec a)\}$ as the indicator variable
that is $1$ if $P(\vec a) = 0$, this lemma can be restated
as saying $\sum_{\veca\in S^n} \min\{1,\mult(P,\veca)\}
\leq d \cdot |S|^{n-1}$.
Our version below strengthens this lemma by replacing
$\min\{1,\mult(P,\veca)\}$ with $\mult(P,\veca)$ in this inequality.

\begin{lemma}
\label{lem:schwartz}
Let $P \in \F[\vecx]$ be a nonzero polynomial of total degree at most $d$.
Then for any finite $S \subseteq \F$,
$$\sum_{\veca \in S^n} \mult(P,\veca) \leq d\cdot |S|^{n-1}.$$
\end{lemma}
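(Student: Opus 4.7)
The plan is induction on $n$. The base case $n=1$ is the classical fact that a nonzero univariate polynomial of degree at most $d$ has at most $d$ roots counted with multiplicity, so $\sum_{a\in S}\mult(P,a)\le\deg P\le d$. For $n \ge 2$, the natural first attempt is to slice along the $X_n$-axis: for each $\vec a' \in S^{n-1}$, set $L_{\vec a'}(X_n) := P(\vec a',X_n)$, and use the bound $\mult(P,(\vec a',t)) \le \mult(L_{\vec a'},t)$, which is immediate from Corollary~\ref{corr:compose} with direction $\vec b = (0,\ldots,0,1)$. When $L_{\vec a'}\not\equiv 0$, summing over $t\in S$ gives $\sum_t \mult(P,(\vec a',t)) \le \deg L_{\vec a'}\le d$ by the base case. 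The main obstacle is the ``bad'' set of $\vec a'$ with $L_{\vec a'}\equiv 0$, where this bound is vacuous.

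The key move is to slice perpendicular to $X_n$ instead. For each $t \in S$, let $R_t(\vec X') := P(\vec X',t) \in \F[\vec X']$. Now $R_t\equiv 0$ happens exactly when $(X_n-t)$ divides $P$, and since the linear factors for distinct $t$ are coprime, this occurs for at most $\deg_{X_n}P \le d$ values of $t \in S$. I therefore factor out all the $(X_n - t)$ content over $S$: put $T = \{t \in S : (X_n-t)\mid P\}$, let $m_t$ be the maximum exponent with $(X_n-t)^{m_t}\mid P$, set $F := \prod_{t\in T}(X_n-t)^{m_t}$ of degree $M := \sum_{t\in T} m_t \le d$, and write $P = F\cdot P'$ with $\deg P' \le d - M$. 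By construction, $R_t^{P'}(\vec X') := P'(\vec X',t)$ is nonzero in $\F[\vec X']$ for every $t \in S$.

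The finish uses the identity $\mult(P,\vec a) = \mult(F,\vec a) + \mult(P',\vec a)$, the one step going slightly beyond the paper's preliminaries: the $\ge$ direction follows from the Leibniz rule for Hasse derivatives (a product expansion of $(FP')(\vec X+\vec Z)$ in the spirit of the proof of Proposition~\ref{prop:prop}), and the $\le$ direction from the fact that the nonzero initial forms of $F(\vec a+\vec Z)$ and $P'(\vec a+\vec Z)$ in $\F[\vec Z]$ multiply to a nonzero polynomial (since $\F[\vec Z]$ is an integral domain). Given this, $\sum_{\vec a\in S^n}\mult(F,\vec a) = |S|^{n-1}\cdot M$ since $F$ depends only on $X_n$; and applying the induction hypothesis to each $R_t^{P'}$ (nonzero, degree $\le d-M$, in $n-1$ variables) together with the bound $\mult(P',(\vec a',t))\le\mult(R_t^{P'},\vec a')$ (a special case of Proposition~\ref{prop:multmult}), summed over $t\in S$, yields $\sum_{\vec a}\mult(P',\vec a) \le (d-M)|S|^{n-1}$. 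Adding the two contributions gives exactly $M|S|^{n-1} + (d-M)|S|^{n-1} = d\cdot|S|^{n-1}$, as required.
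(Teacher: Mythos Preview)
Your proof is correct and takes a genuinely different route from the paper's. Both argue by induction on $n$ with the same base case, but the inductive steps are structurally distinct. The paper writes $P = \sum_{j=0}^{t} P_j(X_1,\ldots,X_{n-1})\, X_n^j$ with $P_t \neq 0$ and applies the induction hypothesis \emph{once}, to the leading coefficient $P_t$; the work goes into the pointwise bound $\sum_{a_n\in S}\mult(P,(\vec a',a_n)) \le \mult(P_t,\vec a')\cdot|S| + t$, which is obtained by choosing a Hasse-derivative index $\vec i$ of weight $\mult(P_t,\vec a')$ with $P_t^{(\vec i)}\ne 0$ and then combining Lemma~\ref{lem:dermult} with the $n=1$ case applied to the nonzero univariate polynomial $P^{(\vec i,0)}(\vec a',X_n)$. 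This closely mirrors the classical Schwartz--Zippel argument (reduce to the leading coefficient), with the derivative machinery handling the multiplicity bookkeeping. You instead factor $P = F\cdot P'$ to strip off all the $(X_n-t)$-content for $t\in S$, use the product identity $\mult(FP',\vec a)=\mult(F,\vec a)+\mult(P',\vec a)$, and then apply the induction hypothesis $|S|$ times --- once to each slice $P'(\vec X',t)$, which your factoring step guarantees is nonzero. Your approach sidesteps the derivative trick entirely, at the price of importing one fact outside the paper's stated preliminaries (additivity of multiplicity for products, which your initial-form argument over the integral domain $\F[\vec Z]$ justifies correctly). The paper's argument has the virtue of staying entirely within the Hasse-derivative toolkit it has set up and of visibly generalizing the textbook proof; yours is arguably more elementary and makes the split $d = M + (d-M)$ into ``pure $X_n$-content'' plus ``residual'' quite transparent.
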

\begin{proof}
We prove it by induction on $n$.

For the base case when $n=1$, we first show that if $\mult(P,a) = m$
then $(X-a)^m$ divides $P(X)$. To see this, note that by definition
of multiplicity, we have that $P(a+Z) = \sum_{i} P^{(i)}(a) Z^i$
and $P^{(i)}(a) = 0$ for all $i < m$. We conclude that $Z^m$
divides $P(a+Z)$, and thus $(X-a)^m$ divides $P(X)$. It follows
that $\sum_{a \in S} \mult(P,a)$ is at most the degree of $P$.

Now suppose $n > 1$. Let $$P(X_1, \ldots, X_n) = \sum_{j = 0}^t
P_j(X_1, \ldots, X_{n-1}) X_n^j,$$ where $0 \leq t \leq d$,
$P_t(X_1, \ldots, X_{n-1}) \neq 0$ and $\deg(P_j) \leq d-j$.

For any $a_1, \ldots, a_{n-1} \in S$, let $m_{a_1, \ldots, a_{n-1}} = \mult(P_t, (a_1, \ldots, a_{n-1}))$. We will show that
\begin{equation}
\label{eq:schwartz}
\sum_{a_n \in S} \mult(P, (a_1, \ldots, a_n)) \leq m_{a_1, \ldots, a_{n-1}} \cdot |S| + t.
\end{equation}
Given this, we may then bound
$$ \sum_{a_1, \ldots, a_n \in S} \mult(P, (a_1, \ldots, a_n)) \leq \sum_{a_1, \ldots, a_{n-1} \in S} m_{a_1, \ldots, a_{n-1}}\cdot |S| + |S|^{n-1} \cdot t.$$
By the induction hypothesis applied to $P_t$, we know that
$$\sum_{a_1, \ldots, a_{n-1} \in S} m_{a_1, \ldots, a_{n-1}} \leq \deg(P_t) \cdot |S|^{n-2} \leq (d-t)\cdot
|S|^{n-2}.$$
This implies the result.

We now prove Equation~\eqref{eq:schwartz}. Fix $a_1, \ldots, a_{n-1} \in S$ and let $\vec i = (i_1, \ldots, i_{n-1})$ be such that $\wt(\vec i) = m_{a_1, \ldots, a_{n-1}}$ and $P_t^{(\vec i)}(X_1, \ldots, X_{n-1}) \neq 0$. Letting $(\vec i, 0)$ denote the vector $(i_1, \ldots, i_{n-1}, 0)$, we note that
$$P^{(\vec i, 0)}(X_1, \ldots, X_n) = \sum_{j = 0}^t P_j^{(\vec i)}(X_1, \ldots, X_{n-1}) X_n^j,$$
and hence $P^{(\vec i, 0)}$ is a nonzero polynomial.

Now by Lemma~\ref{lem:dermult} and Corollary~\ref{corr:compose}, we know that
\begin{align*}
\mult(P(X_1, \ldots, X_n), (a_1, \ldots, a_n)) &\leq \wt(\vec i, 0) + \mult( P^{(\vec i, 0)}(X_1, \ldots,X_n) , (a_1, \ldots, a_n)) \\
&\leq m_{a_1, \ldots, a_{n-1}} + \mult(P^{(\vec i, 0)}(a_1, \ldots, a_{n-1}, X_n), a_n) .
\end{align*}
Summing this up over all $a_n \in S$, and applying the $n=1$ case of this lemma to the nonzero univariate degree-$t$ polynomial $P^{(\vec i, 0)}(a_1, \ldots, a_{n-1}, X_n)$, we get Equation~\eqref{eq:schwartz}. This completes the proof of the lemma.
\end{proof}

The following corollary simply states the above lemma in
contrapositive form, with $S = \F_q$.

\begin{corollary}
\label{prop:zero}
Let $P \in \F_q[\vecx]$ be a polynomial of total degree at most $d$.
If $\sum_{\veca \in \F_q^n} \mult(P,\veca) > d\cdot q^{n-1}$,
then $P(\vecx) = 0$.
\end{corollary}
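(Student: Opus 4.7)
The plan is simply to invoke Lemma~\ref{lem:schwartz} in contrapositive form with the specialization $S = \F_q$. Suppose for contradiction that $P$ is a nonzero polynomial in $\F_q[\vecx]$ of total degree at most $d$ satisfying the hypothesis $\sum_{\veca \in \F_q^n} \mult(P,\veca) > d \cdot q^{n-1}$. Since $P$ is nonzero and $\F_q$ is a finite subset of $\F_q$ of size $q$, Lemma~\ref{lem:schwartz} directly applies and yields
\[
\sum_{\veca \in \F_q^n} \mult(P,\veca) \;\leq\; d \cdot q^{n-1},
\]
which contradicts the assumption. Hence the assumption that $P$ is nonzero was false, and $P(\vecx) = 0$ identically.

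There is essentially no obstacle here beyond checking that the hypotheses of Lemma~\ref{lem:schwartz} are met. The lemma is stated for an arbitrary field $\F$ and an arbitrary finite subset $S \subseteq \F$, and the choice $\F = \F_q$, $S = \F_q$ is permissible (with $|S| = q$). The strictness of the inequality in the hypothesis of the corollary is exactly what makes the contrapositive argument work: it rules out the boundary case where a nonzero $P$ could saturate the bound in Lemma~\ref{lem:schwartz}. Thus the proof is a one-line deduction, and no further casework or estimation is needed.
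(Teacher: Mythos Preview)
Your proof is correct and matches the paper's own approach exactly: the paper states that the corollary ``simply states the above lemma in contrapositive form, with $S = \F_q$,'' and provides no further argument.
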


\iffalse{
{\bf Include the other proof too?} It might be enjoyable?

{\bf Madhu: Sure ... can include, but is not necessary ...}
}\fi

\section{A lower bound on the size of Kakeya sets}
\label{sec:kakeya}

We now give a lower bound on the size of Kakeya sets in $\F_q^n$.
We implement the plan described in Section~\ref{sec:intro}.
Specifically, in Proposition~\ref{prop:interpolate} we show that
we can find a somewhat low degree non-zero polynomial that vanishes
with
high multiplicity on any given Kakeya set, where the degree of
the polynomial grows with the size of the set.
Next, in Claim~\ref{clm:spread} we show that the homogenous part
of this polynomial vanishes with fairly high multiplicity
everywhere in $\F_q^n$. Using the strengthened Schwartz-Zippel
lemma, we conclude that the homogenous polynomial is identically
zero if the Kakeya set is too small, leading to the desired contradiction.
The resulting lower bound (slightly stronger than
Theorem~\ref{thm:kakeya-crude})
is given in Theorem~\ref{thm:kakeya}.

\begin{proposition}
\label{prop:interpolate}
Given a set $K \subseteq \F^n$ and non-negative integers
$m,d$ such that
$${m+n -1 \choose n} \cdot |K| < {d + n \choose n},$$ there
exists a non-zero polynomial $P = P_{m,K} \in \F[\vec X]$ of
total degree at most $d$ such that $\mult(P,\vec a) \geq m$
for every $\vec a \in K$.
\end{proposition}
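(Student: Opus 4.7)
The plan is to prove the proposition by a standard dimension-counting (linear algebra) argument. The space $V$ of polynomials $P \in \F[\vec X]$ of total degree at most $d$ is a vector space over $\F$ whose dimension equals the number of monomials $\vec X^{\vec i}$ with $\wt(\vec i) \leq d$, namely $\binom{d+n}{n}$. I would look for the desired $P$ as a nonzero element of this space satisfying a system of homogeneous linear equations.

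Next I would encode the multiplicity condition as linear constraints. By the definition of multiplicity, $\mult(P,\vec a) \geq m$ is equivalent to requiring $P^{(\vec i)}(\vec a) = 0$ for every non-negative vector $\vec i$ with $\wt(\vec i) < m$. Since each Hasse derivative $P^{(\vec i)}$ is an $\F$-linear function of the coefficients of $P$ (it is just the coefficient of $\vec Z^{\vec i}$ in $P(\vec X + \vec Z)$, evaluated at $\vec X = \vec a$), each such equation is a single linear constraint on the coefficients of $P$. The number of vectors $\vec i \in \Z_{\geq 0}^n$ with $\wt(\vec i) < m$ equals $\binom{m-1+n}{n} = \binom{m+n-1}{n}$ by a standard stars-and-bars count, so each $\vec a \in K$ contributes $\binom{m+n-1}{n}$ linear constraints, giving a total of $|K| \cdot \binom{m+n-1}{n}$ constraints.

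The conclusion then follows because a homogeneous linear system over $\F$ with strictly fewer equations than unknowns always admits a nonzero solution. Under the hypothesis $\binom{m+n-1}{n} \cdot |K| < \binom{d+n}{n}$, the number of constraints is less than $\dim V$, so there exists a nonzero $P \in V$ in the intersection of all the kernels; this $P$ has total degree at most $d$ and satisfies $\mult(P,\vec a) \geq m$ for every $\vec a \in K$, as required.

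There is essentially no obstacle here beyond verifying the count of derivative constraints; the argument is a direct instance of the classical ``interpolation by parameter counting'' technique, with the only subtlety being to check that imposing $\mult(P,\vec a) \geq m$ really is equivalent to $\binom{m+n-1}{n}$ linear conditions on the coefficients rather than something larger or more subtle, which follows at once from the definition of the Hasse derivative and the linearity observed in Proposition~\ref{prop:prop}(1).
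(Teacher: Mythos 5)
Your proof is correct and takes essentially the same dimension-counting approach as the paper: compare the number of monomials of degree at most $d$, namely $\binom{d+n}{n}$, against the $\binom{m+n-1}{n}$ linear constraints imposed per point of $K$ by the vanishing of all Hasse derivatives $P^{(\vec i)}$ with $\wt(\vec i) < m$, and invoke the existence of a nonzero solution to an underdetermined homogeneous linear system. You simply spell out more explicitly the stars-and-bars count and the linearity of Hasse derivatives in the coefficients, which the paper leaves implicit.
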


\begin{proof}
The number of possible monomials in $P$ is ${d + n \choose n}$. Hence there
are ${d + n \choose n}$ degrees of freedom in the choice for the coefficients
for these monomials. For a given point $\vec a$, the condition that $\mult(P,\vec a) \geq m$
imposes ${m + n-1 \choose n}$ homogeneous linear constraints on the coefficients of $P$. Since
the total number of (homogeneous) linear constraints is ${m+n -1 \choose n} \cdot |K|$, which is
strictly less than the number of unknowns, there is a nontrivial solution.

\end{proof}

\begin{theorem}
\label{thm:kakeya}
If $K \subseteq \F_q^n$ is a Kakeya set, then
$|K| \geq \big(\frac{q}{2-1/q} \big)^n$.
\end{theorem}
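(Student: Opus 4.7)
The plan is to prove the contrapositive via the extended method of multiplicities. Assume $|K| < \bigl(q/(2 - 1/q)\bigr)^n$; I will choose a large multiplicity parameter $m$ and companion parameters $\ell$ and $d$ to satisfy three conditions simultaneously:
(a) ${m+n-1 \choose n} \cdot |K| < {d+n \choose n}$, so Proposition~\ref{prop:interpolate} supplies a nonzero interpolating polynomial of degree $\leq d$ with multiplicity $\geq m$ on $K$;
(b) $(m - \ell + 1)q > d - \ell + 1$, which will force the top homogeneous form $H_P$ to vanish with multiplicity $\geq \ell$ at every direction $\vec b$; and
(c) $\ell q > d$, which will then contradict the strengthened Schwartz--Zippel lemma (Corollary~\ref{prop:zero}).
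Setting $\ell \approx mq/(2q-1)$ and $d = \ell q - 1$ is the natural balance between (b) and (c); as $m \to \infty$, the ratio $d/m$ tends to $q^2/(2q-1) = q/(2-1/q)$, and hence ${d+n \choose n}/{m+n-1 \choose n} \to \bigl(q/(2-1/q)\bigr)^n$, so (a) becomes available once $m$ is large enough relative to the gap between $|K|$ and the target bound.

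With these parameters fixed, I obtain $P$ from (a) via Proposition~\ref{prop:interpolate} and let $H_P$ be its top-degree homogeneous part. The key step is upgrading the multiplicity $m$ on $K$ to multiplicity $\ell$ on all of $\F_q^n$ for $H_P$. Fix $\vec b \in \F_q^n \setminus \{\vec 0\}$ and pick an offset $\vec a_0 \in \F_q^n$ such that $\{\vec a_0 + t \vec b : t \in \F_q\} \subseteq K$ (which exists by the Kakeya property). For any $\vec i$ with $\wt(\vec i) < \ell$, Lemma~\ref{lem:dermult} gives $\mult(P^{(\vec i)}, \vec a_0 + t\vec b) \geq m - \wt(\vec i)$ for every $t \in \F_q$, and Corollary~\ref{corr:compose} transfers this to the univariate restriction $g(T) := P^{(\vec i)}(\vec a_0 + T \vec b)$, which therefore has total zero-multiplicity at least $(m - \wt(\vec i))q$ while having degree at most $d - \wt(\vec i)$. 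Condition (b) makes the sum of multiplicities strictly exceed the degree, so $g \equiv 0$. In particular, the coefficient of the highest possible power of $T$ in $g$ vanishes, and this coefficient equals $H_{P^{(\vec i)}}(\vec b) = (H_P)^{(\vec i)}(\vec b)$ by Proposition~\ref{prop:prop}(3). Since this holds for every $\vec i$ with $\wt(\vec i) < \ell$, we conclude $\mult(H_P, \vec b) \geq \ell$; the case $\vec b = \vec 0$ is trivial since $H_P$ is homogeneous of positive degree.

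Finally, $H_P$ is a nonzero polynomial of degree $\leq d$ satisfying $\sum_{\vec b \in \F_q^n} \mult(H_P, \vec b) \geq \ell q^n > d q^{n-1}$, where the last inequality is condition (c). Corollary~\ref{prop:zero} then forces $H_P \equiv 0$, contradicting the nonvanishing of $P$.

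The main obstacle, and the place where the extension of the method of multiplicities is essential, is the second step: exploiting that the Hasse derivatives $P^{(\vec i)}$ themselves inherit high multiplicity on $K$ (Lemma~\ref{lem:dermult}) and that their leading forms coincide with the corresponding derivatives of $H_P$ (Proposition~\ref{prop:prop}(3)). Once this transfer is set up, the parameter balancing is mechanical and the factor $q/(2-1/q)$ drops out exactly from solving (b) and (c) simultaneously.
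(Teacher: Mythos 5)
Your proof is correct and follows essentially the same route as the paper: interpolate a nonzero $P$ of degree $d$ vanishing with multiplicity $m$ on $K$, use Lemma~\ref{lem:dermult} and Corollary~\ref{corr:compose} to show every Hasse derivative $P^{(\vec i)}$ with $\wt(\vec i)<\ell$ vanishes identically on the lines in $K$, read off $(H_P)^{(\vec i)}(\vec b)=0$ from the leading coefficient via Proposition~\ref{prop:prop}, and then invoke the strengthened Schwartz--Zippel (Corollary~\ref{prop:zero}) to force $H_P\equiv 0$. Your parametrization ($\ell\approx mq/(2q-1)$, $d=\ell q-1$) is exactly the paper's ($m=2\ell-\ell/q$, $d=\ell q -1$, $\ell$ a large multiple of $q$) with the roles of $m$ and $\ell$ as free variable swapped, and the conditions you label (a),(b),(c) match the paper's constraints.
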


\begin{proof}
Let $\ell$ be a large multiple of $q$ and let $$m = 2\ell -
\ell/q$$
 $$d = \ell q - 1.$$ These three parameters ($\ell,m$ and $d$)
 will be used as follows: $d$ will be the bound on the degree of
 a polynomial $P$ which vanishes on $K$, $m$ will be the
 multiplicity of the zeros of $P$ on $K$ and $\ell$
 will be the multiplicity of the zeros of the homogenous part of
 $P$ which we will deduce by restricting $P$ to lines passing
 through $K$.

Note that by the choices above we have $d < \ell q$ and $(m-\ell)
q > d - \ell$. We prove below later that
$$|K| \geq \frac{{d + n \choose n}}{{m+n-1 \choose n}} \geq
\alpha^n$$ where $\alpha \to \frac {q}{2-1/q}$ as $\ell \to
\infty$.

Assume for contradiction that $|K| < \frac{{d + n  \choose n}}{{m+n -1 \choose n}}$.
Then, by Proposition~\ref{prop:interpolate}
there exists a non-zero polynomial $P(\vecx) \in \F[\vecx]$
of total degree exactly $d^*$, where $d^* \leq d$, such that $\mult(P,\vec x) \geq m$ for
every $\vec x \in K$. Note that $d^* \geq \ell$ since
$d^* \geq m$ (since $P$ is nonzero and vanishes to multiplicity $\geq m$ at some point),
and $m \geq \ell$ by choice of $m$.
%Let $P = H_P + L_P$, where $L_P$ is a
%polynomial of degree strictly less than $d^*$, and $H_P$ is non-zero and
%homogeneous of degree $d^*$.
Let $H_P(\vecx)$ be the homogeneous part of $P(\vecx)$ of degree $d^*$. Note that
$H_P(\vecx)$ is nonzero.
The following claim shows that $H_P$ vanishes to multiplicity $\ell$ at each point of $\F_q^n$.

%Now for every $\vecb \in \F^n$, let $\veca = \veca(\vecb)$
%be such that $\{\veca + t \vec b | t \in \F\} \subset K$.

%Since $(m-\ell) q > d - \ell \geq d^* - \ell$, Lemma~\ref{lem:spread} implies that
%that $H_{P}(\vecx)$ has a zero of multiplicity $\ell$ at $\vecb \in \F^n$.

\begin{claim}
\label{clm:spread}
For each $\vecb \in \F_q^n$.
$$\mult(H_P, \vecb ) \geq \ell.$$
\end{claim}

\begin{proof}
Fix $\vec i$ with $\wt(\vec i) = w \leq \ell - 1$. Let $Q(\vec X) = P^{(\vec i)}(\vecx)$.
Let $d'$ be the degree of the polynomial $Q(\vecx)$, and note that $d' \leq d^* - w$.

Let $\veca = \veca(\vecb)$ be such that $\{\veca + t \vec b | t \in \F_q\} \subset K$.
Then for all $t \in \F_q$, by Lemma~\ref{lem:dermult}, $\mult(Q, \vec a + t\vec b) \geq m- w$.
Since $w \leq \ell -1$ and $(m- \ell) \cdot q > d^*- \ell $, we get that
$(m- w) \cdot q > d^*- w$.

Let $Q_{\vec a, \vec b}(T)$ be the polynomial $Q(\vec a + T\vec b)
\in \F_q[T]$. Then $Q_{\vec a, \vec b}(T)$ is a univariate
polynomial of degree at most $d'$, and by
Corollary~\ref{corr:compose}, it vanishes at each point of $\F_q$
with multiplicity $m- w$. Since $$(m- w) \cdot q > d^*- w \geq
\deg(Q_{\vec a, \vec b}(T)),$$ we conclude that $Q_{\vec a, \vec
b}(T) = 0$. Hence the coefficient of $T^{d'}$ in $Q_{\vec a, \vec
b}(T)$ is $0$. Let $H_Q$ be the homogenous component of $Q$ of
highest degree. Observe that the coefficient of $T^{d'}$ in
$Q_{\vec a, \vec b}(T)$ is $H_Q(\vec b)$. Hence $H_Q(\vec b) = 0$.

%{\bf TO BE FIXED AFTER WRITING IN THE PROPOSITION **** By Prop blah,
%$Q_{\vec a, \vec b}(T) = H_Q(\vec b)T^{d'} + O(T^{d'-1})$ ***.}

However $H_Q(\vec X) = (H_{P})^{(\vec i)}(\vec X)$ (by item 2 of Proposition~\ref{prop:prop}). Hence $(H_P)^{(\vec i)}(\vec b) = 0$. Since this is true for all $\vec i$ of weight at most
$ \ell - 1$, we conclude that $\mult(H_P,\vec b) \geq \ell$.
\end{proof}

Applying Corollary~\ref{prop:zero}, and noting that
$\ell q^n > d^* q^{n-1}$, we conclude that $H_{P}(\vec X) = 0$.
This contradicts the fact that $P(\vec X)$ is a nonzero polynomial.

Hence, $$|K| \geq \frac{{d + n \choose n}}{{m+n -1 \choose n}}$$

Now, by our choice of $d$ and $m$,
$$\frac{{d + n \choose n}}{{m+n -1 \choose n}}
= \frac{{\ell q - 1 + n \choose n}}{{2\ell - \ell/q +n -1 \choose n}}
= \frac{\prod_{i = 1}^n (\ell q - 1 + i)}{\prod_{i = 1}^n \left(2\ell - \ell/q -1 + i\right)}$$
Since this is true for all $\ell$ such that $\ell$ is a multiple of $q$, we get that
$$|K| \geq \lim_{\ell \to \infty}\prod_{i=1}^n \left(\frac {q - 1/l + i/l}{2 - 1/q - 1/l + i/l}\right) = \left(\frac{q}{2 - 1/q}\right)^n$$

%Hence,
%\begin{eqnarray*}
%|K| &\geq \lim_{\ell \to \infty} \prod_{i=1}^n (\frac {q - 1/l + i/l}{2 - 1/q - 1/l + i/l})\\
%&=\prod_{i=1}^n \lim_{\ell \to \infty} (\frac {q - 1/l + i/l}{2 - 1/q - 1/l + i/l})\\
%&= \prod_{i=1}^n \frac{q}{2 - 1/q}\\
%&= (\frac{q}{2 - 1/q})^n
%\end{eqnarray*}
\end{proof}

\section{Statistical Kakeya for curves}
\label{sec:kakeyacurves}

Next we extend the results of the previous section to a
form conducive to analyze the mergers of Dvir and
Wigderson~\cite{DW08}.
The extension changes two aspects of the consideration
in Kakeya sets, that we refer to as ``statistical'' and
``curves''. We describe these terms below.

In the setting of Kakeya sets we were given
a set $K$ such that for {\em every} direction, there was
a line in that direction such that {\em every} point on
the line was contained in $K$.
In the {\em statistical} setting we replace both occurrences
of the ``every'' quantifier with a weaker ``for many''
 quantifier. So we consider sets that satisfy the
condition that for many directions, there exists a line in that
direction intersecting $K$ in many points.

A second change we make is that we now consider curves
of higher degree and not just lines.
We also do not consider curves in various {\em directions}, but
rather curves passing through a given set of special points.
We start with formalizing the terms ``curves'', ``degree'' and
``passing through a given point''.

A {\em curve of degree $\Lambda$ in $\F_q^n$} is a tuple of polynomials $C(X) = (C_1(X), \ldots, C_n(X)) \in \F_q[X]^n$
such that $\max_{i \in [n]} \deg(C_i(X)) = \Lambda$. A curve $C$ naturally defines a map from $\F_q$ to $\F_q^n$.
For $\vec x \in \F_q^n$, we say that a curve $C$ {\em passes through $\vec x$} if there is a $t \in \F_q$ such that
$C(t) = \vec x$.

We now state and prove our statistical version of the Kakeya
theorem for curves.

\begin{theorem}[Statistical Kakeya for curves]
\label{lem:mergermain} Let $\lambda > 0,  \eta > 0$. Let $\Lambda > 0$
be an integer such that $\eta q > \Lambda$. Let $S \subseteq \F_q^n$ be
such that $|S| = \lambda q^n$. Let $K \subseteq \F_q^n$ be such
that for each $\vec x \in S$, there exists a curve $C_{\vec x}$ of
degree at most $\Lambda$ that passes through $\vec x$, and intersects
$K$ in at least $\eta q$ points. Then, $$|K| \geq
\left(\frac{\lambda q}{\Lambda \left(\frac{\lambda q - 1}{\eta q}\right)
+ 1}\right)^n.$$ In particular, if $\lambda \geq \eta $ we get
that $|K| \geq \left(\frac{\eta q}{\Lambda+1}\right)^n$.
\end{theorem}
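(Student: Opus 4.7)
The argument closely follows the proof of Theorem~\ref{thm:kakeya}, with three integer parameters $\ell \leq m \leq d$ playing analogous roles: $d$ bounds the total degree of an interpolating polynomial $P$; $m$ is the multiplicity to which $P$ is forced to vanish on $K$; and $\ell$ is the multiplicity to which we will deduce that $P$ vanishes on the statistically large set $S$. Suppose for contradiction that $|K| < \binom{d+n}{n}/\binom{m+n-1}{n}$; then Proposition~\ref{prop:interpolate} supplies a nonzero $P \in \F_q[\vec X]$ of degree at most $d$ with $\mult(P,\vec y) \geq m$ for every $\vec y \in K$.

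The crux is transferring multiplicity from $K$ to the curve-accessible set $S$. Fix $\vec x \in S$ and any Hasse index $\vec i$ with $\wt(\vec i) = w \leq \ell - 1$. By Proposition~\ref{prop:prop}(2), $Q := P^{(\vec i)}$ has total degree at most $d - w$, and by Lemma~\ref{lem:dermult} it vanishes with multiplicity at least $m - w$ on $K$. The composition $Q \circ C_{\vec x} \in \F_q[T]$ is a univariate polynomial of degree at most $(d-w)\Lambda$, and Proposition~\ref{prop:multmult} gives $\mult(Q \circ C_{\vec x}, t) \geq m - w$ at each of the $\geq \eta q$ values $t \in \F_q$ with $C_{\vec x}(t) \in K$. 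The total multiplicity at these roots is therefore at least $(m - w)\eta q$, which (by the univariate base case inside the proof of Lemma~\ref{lem:schwartz}) exceeds the degree whenever $(m-w)\eta q > (d-w)\Lambda$. Since $\eta q > \Lambda$ by hypothesis, the slack $(m-w)\eta q - (d-w)\Lambda$ is decreasing in $w$, so it is enough to impose the single inequality $(m - \ell + 1)\eta q > (d - \ell + 1)\Lambda$; under it, $Q \circ C_{\vec x} \equiv 0$, and evaluating at the $t$ with $C_{\vec x}(t) = \vec x$ gives $P^{(\vec i)}(\vec x) = 0$. Varying $\vec i$ yields $\mult(P,\vec x) \geq \ell$ for every $\vec x \in S$.

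Corollary~\ref{prop:zero} now forces $P \equiv 0$ as soon as $\ell|S| > d q^{n-1}$, i.e., $\lambda \ell q > d$, which is the desired contradiction. It remains to optimize $(\ell, m, d)$ subject to $\lambda \ell q > d$ and $(m - \ell + 1)\eta q > (d - \ell + 1)\Lambda$: taking $\ell$ to be a large multiple of $q$, setting $d = \lambda q \ell - 1$, and choosing $m$ to be the smallest integer satisfying the second inequality, one computes $d/\ell \to \lambda q$ and $m/\ell \to \Lambda(\lambda q - 1)/(\eta q) + 1$ as $\ell \to \infty$, so that
\[
\binom{d+n}{n}\bigg/\binom{m+n-1}{n} \longrightarrow (d/m)^n \longrightarrow \left(\frac{\lambda q}{\Lambda(\lambda q - 1)/(\eta q) + 1}\right)^n,
\]
which is the claimed lower bound. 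The main obstacle is the second paragraph: one must compose the \emph{derivatives} of $P$ with $C_{\vec x}$ (rather than $P$ itself) and use the full strength of Proposition~\ref{prop:multmult} to carry the entire multiplicity $m-w$ through the composition, because the gap $\eta q - \Lambda$ may be arbitrarily small, and a crude ``$P\circ C_{\vec x}$ vanishes on $\eta q$ points'' argument would lose a factor of $\ell$ in the exponent of the final bound.
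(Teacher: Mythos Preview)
Your proposal is correct and follows essentially the same approach as the paper: interpolate $P$ vanishing to multiplicity $m$ on $K$, compose each Hasse derivative $P^{(\vec i)}$ with the curve $C_{\vec x}$ to force $\mult(P,\vec x)\ge\ell$ on $S$, and then apply Corollary~\ref{prop:zero} with the same parameter choices $d=\lambda q\ell-1$, $m\approx \Lambda(\lambda q-1)\ell/(\eta q)+\ell$, letting $\ell\to\infty$. Your citation of Proposition~\ref{prop:multmult} for the composition step is in fact slightly more accurate than the paper's own reference to Corollary~\ref{corr:compose}, which is stated only for lines.
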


%{\bf Madhu to rest: It would be nice to conclude the theorem with
%a simplified version of the inequality.
%A statement of the form: ``In particular, if $\lambda = \omega(1/q)$
%we get that $|K| \geq (\delta q/k)^r$''. Can we check if this is
%the version we'd like to use?}

Observe that when $\lambda = \eta = 1$, and $\Lambda = 1$, we get the same bound as that for Kakeya sets as obtained
in Theorem~\ref{thm:kakeya}.

\begin{proof}
Let $\ell$ be a large integer and let $$d = \lambda \ell q -1$$
$$m = \Lambda \frac{\lambda \ell q -1 -(\ell-1)}{\eta q} + \ell.$$ By
our choice of $m$ and $d$, we have $\eta q(m-(\ell-1)) > \Lambda(d -
(\ell-1))$. Since $\eta q > \Lambda$, we have that for all $w$ such that
$0 \leq w \leq \ell-1$, $\eta q(m-w) > \Lambda(d - w)$. Just as in the
proof of Theorem~\ref{thm:kakeya}, we will prove that $$|K| \geq
\frac{{d + n \choose n}}{{m+n-1 \choose n}} \geq \alpha^n$$ where
$\alpha \to \frac{\lambda q}{\Lambda \left(\frac{\lambda q - 1}{\eta
q}\right) + 1}$ as $\ell \to \infty$.

If possible, let $|K| < \frac{{d + n \choose n}}{{m+n-1 \choose n}}$.
As before, by Proposition~\ref{prop:interpolate}
there exists a non-zero polynomial $P(\vecx) \in \F_q[\vecx]$
of total degree $d^*$, where $d^* \leq d$, such that $\mult(P,\veca) \geq m$ for
every $\veca \in K$. We will deduce that in fact $P$ must vanish on all points in $S$ with multiplicity
$\ell$. We will then get the desired contradiction from Corollary~\ref{prop:zero}.

\begin{claim}
For each $\vec x_0 \in S$,
$$\mult(P, \vec x_0) \geq \ell.$$
\end{claim}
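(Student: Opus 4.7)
The plan is to imitate the proof of Claim~\ref{clm:spread} in Theorem~\ref{thm:kakeya}, but with the line replaced by the curve $C_{\vec x_0}$ and the ``every point'' Kakeya hypothesis replaced by the statistical one. Specifically, I want to fix $\vec i$ with $\wt(\vec i) = w \leq \ell-1$, set $Q(\vec X) = P^{(\vec i)}(\vec X)$, and prove $Q(\vec x_0) = 0$; doing this for every such $\vec i$ gives $\mult(P, \vec x_0) \geq \ell$.

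By Lemma~\ref{lem:dermult}, at every point $\vec a \in K$ we have $\mult(Q, \vec a) \geq m - w$. Now consider the univariate restriction $R(T) \eqdef Q(C_{\vec x_0}(T)) \in \F_q[T]$. Since each coordinate of $C_{\vec x_0}$ has degree at most $\Lambda$ and $\deg(Q) \leq d^* - w \leq d - w$, we get $\deg(R) \leq \Lambda(d - w)$. For each of the (at least $\eta q$) values $t \in \F_q$ with $C_{\vec x_0}(t) \in K$, Proposition~\ref{prop:multmult} applied to $Q$ and $C_{\vec x_0}$ gives
$$\mult(R, t) \;\geq\; \mult(Q, C_{\vec x_0}(t)) \cdot \mult(C_{\vec x_0} - C_{\vec x_0}(t), t) \;\geq\; (m-w)\cdot 1.$$
Summing over these $t$'s shows that the total multiplicity of zeros of $R$ is at least $\eta q (m - w)$. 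By our choice of $m$ and $d$ (as recorded in the paragraph preceding the claim), $\eta q (m - w) > \Lambda(d - w) \geq \deg(R)$, so the univariate $R$ must be identically zero.

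Because $C_{\vec x_0}$ passes through $\vec x_0$, there exists $t_0 \in \F_q$ with $C_{\vec x_0}(t_0) = \vec x_0$, and thus $0 = R(t_0) = Q(\vec x_0) = P^{(\vec i)}(\vec x_0)$. Since this holds for every $\vec i$ with $\wt(\vec i) \leq \ell - 1$, we conclude $\mult(P, \vec x_0) \geq \ell$, as desired.

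The only subtle point is the bookkeeping of degrees and weights: we need the inequality $\eta q (m - w) > \Lambda(d - w)$ to hold uniformly for all $w \in \{0, 1, \ldots, \ell-1\}$, which is exactly why the theorem statement picks $m$ so that $\eta q(m - (\ell-1)) > \Lambda(d - (\ell-1))$ and then uses $\eta q > \Lambda$ to propagate the inequality to smaller $w$. Everything else is a direct translation of the Kakeya argument through the composition lemma, with $C_{\vec x_0}$ playing the role of the line $\vec a + T \vec b$ and the $\eta q$ intersection points playing the role of the $q$ points of the line.
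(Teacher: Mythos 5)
Your proof is correct and follows essentially the same path as the paper's: differentiate, restrict to the curve $C_{\vec x_0}$, count zeros with multiplicity at the $\geq \eta q$ intersection points, compare against the degree bound $\Lambda(d-w)$, and conclude the restriction vanishes identically. If anything, your direct appeal to Proposition~\ref{prop:multmult} is the cleaner citation, since the paper's Corollary~\ref{corr:compose} is literally stated only for lines $\vec a + T\vec b$ and the paper is implicitly using its curve-generalization at the corresponding step.
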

\begin{proof}
Fix any $\vec i$ with $\wt(\vec i) = w \leq  \ell -1$. Let $Q(\vec X) = P^{(\vec i)}(\vecx)$.
Note that $Q(\vecx)$ is a polynomial of degree at most $d^* - w$.
By Lemma~\ref{lem:dermult}, for all points $\vec a \in K$, $\mult(Q, \vec a) \geq m-w$.

Let $C_{\vec x_0}$ be the curve of degree $\Lambda$ through $\vec x_0$, that
intersects $K$ in at least $\eta q$ points. Let $t_0 \in \F_q$ be such that $C_{\vec x_0}(t_0) = \vec x_0$.
Let $Q_{\vec x_0}(T)$ be the polynomial $Q \circ C_{\vec x_0}(T) \in \F_q[T]$. Then $Q_{\vec x_0}(T)$ is a univariate
polynomial of degree at most $\Lambda(d^* - w)$.
By Corollary~\ref{corr:compose}, for all points $t \in \F_q$ such that $C_{\vec x_0}(t) \in K$,  $Q_{\vec x_0}(T)$ vanishes at $t$ with multiplicity $m-w$. Since the number of such points $t$ is at least $\eta q$, we get that $Q_{\vec x_0}(T)$ has at least $\eta q(m-w)$ zeros (counted with multiplicity).
However, by our choice of parameters, we know that
$$\eta q(m-w) > \Lambda(d - w) \geq \Lambda(d^* -w) \geq \deg(Q_{\vec x_0}(T)).$$ Since the degree of  $Q_{\vec x_0}(T)$ is strictly less than the number of its
zeros, $Q_{\vec x_0}(T)$ must be identically zero. Thus we get $Q_{\vec x_0}(t_0) = Q(C_{\vec x_0}(t_0)) = Q(\vec x_0) = 0$
Hence $P^{(\vec i)}(\vec x_0) = 0$. Since this is true for all $\vec i$ with $\wt(\vec i) \leq \ell -1$,
we conclude that $\mult(P, \vec x_0) \geq \ell$.
\end{proof}

Thus $P$ vanishes at every point in $S$ with multiplicity $\ell$.
As $P(\vec X)$ is a non-zero polynomial, Corollary~\ref{prop:zero} implies that $\ell |S| \leq d^* q^{n-1}$. Hence
$\ell \lambda q^n \leq d q^{n-1}$, which contradicts the choice of $d$.

Thus $|K| \geq \frac{{d + n \choose n}}{{m+n-1 \choose n}}$.
By choice of $d$ and $m$, $$|K| \geq \frac{{\lambda \ell q -1 + n \choose n}}
{{\Lambda \frac{\lambda \ell q -1 -(\ell-1)}{\eta q} + \ell +n-1 \choose n}}.$$

Picking $\ell$ arbitrarily large, we conclude that $$|K| \geq \lim_{\ell \to \infty} \frac{{\lambda \ell q -1 + n \choose n}} {{\Lambda \frac{\lambda \ell q -1 -(\ell-1)}{\eta q} + \ell +n-1 \choose n}}
= \lim_{\ell \to \infty} \left(\frac{\ell \lambda q -1}{\ell \Lambda \left(\frac{\lambda q - 1}{\eta q}\right) + \ell}\right)^n = \left(\frac{\lambda q}{\Lambda \left(\frac{\lambda q - 1}{\eta q}\right) + 1}\right)^n.$$
\end{proof}

\section{Improved Mergers}
\label{sec:merger}

In this section we state and prove our main result on
randomness mergers.

\subsection{Definitions and Theorem Statement}

We start by recalling some basic quantities associated with
random variables.
The {\sf statistical distance} between two random variables $\X$ and
$\Y$ taking values from a finite domain $\Omega$ is defined as
\[\mathop{\max}_{S\subseteq \Omega} \left|\prob[\X \in S] - \prob[
\Y \in S]\right|
.\] We say that $\X$ is $\epsilon$-{\sf close} to $\Y$ if the
statistical distance between $\X$ and $\Y$ is at most $\epsilon$,
otherwise we say that $\X$ and $\Y$ are $\epsilon$-{\sf far}.  The
{\sf min-entropy} of a random variable $\X$ is defined as
\[  H_{\infty}(\X) \triangleq \min_{x \in \text{supp}(\X)}
\log_2\left(\frac{1}{\prob[\X=x]}\right). \]
We say that a random variable $\X$ is $\epsilon$-close
to having min-entropy $m$ if there exists a random
variable $\Y$ of min-entropy $m$ such that $\X$ is
$\epsilon$-close to $\Y$.

A ``merger'' of randomness takes a $\Lambda$-tuple of random variables
and ``merges'' their randomness to produce a high-entropy
random variable, provided the $\Lambda$-tuple is ``somewhere-random'' as defined below.

\begin{definition}[Somewhere-random source]
For integers $\Lambda$ and $N$
a {\em simple $(N,\Lambda)$-somewhere-random source} is a random variable
$\A = (\A_1, \ldots, \A_\Lambda)$ taking values in $S^\Lambda$,
where $S$ is some finite set of cardinality $2^N$,
such that for some $i_0 \in [\Lambda]$,
the distribution of $\A_{i_0}$ is uniform over
$S$. A {\em $(N, \Lambda)$-somewhere-random source} is a convex combination
of simple $(N, \Lambda)$-somewhere-random sources.
(When $N$ and $\Lambda$ are clear from context we refer to the source
as simply a ``somewhere-random source''.)
\end{definition}

We are now ready to define a merger.

\begin{definition}[Merger]
For positive integer $\Lambda$ and set $S$ of size $2^N$,
a function $f : S^\Lambda \times \{0,1\}^d \rightarrow S$
is called an {\em $(m,\epsilon)$-merger} (of $(N,\Lambda)$-somewhere-random sources),
if for every $(N,\Lambda)$ somewhere-random
source $\A = (\A_1, \ldots, \A_\Lambda)$ taking values in
$S^\Lambda$, and for $\B$ being uniformly distributed over $\{0,1\}^d$,
the distribution of $f((\A_1, \ldots, \A_\Lambda), \B)$
is $\epsilon$-close to having min-entropy $m$.
\end{definition}

A merger thus has five parameters associated with it: $N$, $\Lambda$,
$m$, $\epsilon$ and $d$. The general goal is to give explicit
constructions of mergers of $(N,\Lambda)$-somewhere-random sources for
every choice of $N$ and $\Lambda$, for as large an $m$ as possible, and
with $\epsilon$ and $d$ being as small as possible. Known mergers
attain $m = (1-\delta)\cdot N$ for arbitrarily small $\delta$ and
our goal will be to achieve $\delta = o(1)$ as a function of $N$,
while $\epsilon$ is an arbitrarily small positive real number.
Thus our main concern is the growth of $d$ as a function of $N$
and $\Lambda$. Prior to this work, the best known bounds required either
$d = \Omega(\log N + \log \Lambda)$ or $d = \Omega(\Lambda)$. We only require
$d = \Omega(\log \Lambda)$.

\iffalse{
% Madhu to all: I prefer the following definition ...

\begin{definition}[Merger]
For positive integer $\Lambda$ and set $S$ of size $2^N$,
a function $f : S^\Lambda \times \{0,1\}^d \rightarrow S$
is called an {\em $(d,\epsilon,\delta)$-merger} (of $(N,\Lambda)$-somewhere-random sources),
if for every $(N,\Lambda)$-somewhere-random
source $\A = (\A_1, \ldots, \A_\Lambda)$ taking values in
$S^\Lambda$, and for $\B$ being uniformly distributed over $\{0,1\}^d$,
the distribution of $f((\A_1, \ldots, \A_\Lambda), \B)$
is $\epsilon$-close to having min-entropy $(1-\delta)\cdot N$.
\end{definition}
}\fi

\begin{theorem}
\label{thm:themerger}
For every $\epsilon,\delta > 0$ and integers $N, \Lambda$,
there exists a $((1-\delta)\cdot N, \epsilon)$-merger
of $(N,\Lambda)$-somewhere-random sources, computable in
polynomial time, with seed length
$$d = \frac1\delta \cdot \log_2 \left( \frac{2\Lambda}{\epsilon} \right).$$
\end{theorem}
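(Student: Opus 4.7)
The plan is to analyze the Dvir-Wigderson curve-based merger via the statistical Kakeya-for-curves bound of Theorem~\ref{lem:mergermain}. First I would set $d = \frac{1}{\delta}\log_2(2\Lambda/\epsilon)$, $q = 2^d$, and $n = \lceil N/d \rceil$, and identify $S$ (of cardinality $2^N$) with a subset of $\F_q^n$. Since $q \geq 2\Lambda/\epsilon \geq \Lambda$, I can fix $\Lambda$ distinct interpolation nodes $t_1,\ldots,t_\Lambda \in \F_q$ and define the merger on input $((\vec a_1,\ldots,\vec a_\Lambda), u)$ to output $C(u)$, where $C:\F_q \to \F_q^n$ is the unique polynomial of degree $\leq \Lambda-1$ with $C(t_i)=\vec a_i$ for all $i$; this is computable in polynomial time by Lagrange interpolation.

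Because a convex combination of distributions each $\epsilon$-close to having min-entropy $(1-\delta)N$ is itself $\epsilon$-close to such a distribution (convex combinations preserve the min-entropy bound), it suffices to handle a simple somewhere-random source in which $\vec A_{i_0}$ is uniform on $\F_q^n$ for some fixed $i_0$. Assume for contradiction that the output is $\epsilon$-far from having min-entropy $(1-\delta)N$; then the heavy set $K = \{\vec y : \prob[\text{output}=\vec y] > 2^{-(1-\delta)N}\}$ satisfies $|K| < 2^{(1-\delta)N}$ and $\prob[\text{output}\in K] > \epsilon$. Applying Markov's inequality to $p(\vec a_1,\ldots,\vec a_\Lambda) := |\{u \in \F_q : C(u)\in K\}|/q$ (whose expectation exceeds $\epsilon$) produces a set of tuples of probability $\geq \epsilon/2$ on which $p > \epsilon/2$; projecting this set onto the uniform coordinate $\vec A_{i_0}$ yields $S \subseteq \F_q^n$ with $|S| \geq (\epsilon/2)q^n$ such that each $\vec x \in S$ lies on a curve of degree $\leq \Lambda-1$ meeting $K$ in more than $(\epsilon/2)q$ points. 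Invoking Theorem~\ref{lem:mergermain} with $\lambda=\eta=\epsilon/2$ and curve-degree bound $\Lambda-1$ (its hypothesis $\eta q > \Lambda-1$ holds since $q \geq 2\Lambda/\epsilon$) gives $|K| \geq (\epsilon q/(2\Lambda))^n$. Since $d$ was chosen so that $\epsilon q/(2\Lambda) = 2^{(1-\delta)d}$, this is at least $2^{n(1-\delta)d} \geq 2^{(1-\delta)N}$, contradicting $|K| < 2^{(1-\delta)N}$.

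The main obstacle I anticipate is the translation from the distributional merger guarantee to the extremal hypothesis of Theorem~\ref{lem:mergermain}: the heavy-set plus Markov plus marginal-uniformity chain is standard in extractor analysis, but the parameter accounting is delicate. One must verify that the tight seed length $d = \frac{1}{\delta}\log_2(2\Lambda/\epsilon)$ is just enough to satisfy the theorem's hypothesis $\eta q > \Lambda$, and that taking Lagrange-interpolated curves of degree $\Lambda - 1$ rather than $\Lambda$ is precisely what collapses the factor $\Lambda+1$ in the theorem's conclusion to $\Lambda$, matching the target constant $2\Lambda$ in $\log_2(2\Lambda/\epsilon)$ rather than a larger value.
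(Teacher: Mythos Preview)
Your proposal is correct and follows essentially the same route as the paper: it reduces to simple somewhere-random sources, passes to a small ``heavy'' set $K$ witnessing failure of the min-entropy bound, uses an averaging argument to produce many starting points $\vec x$ each lying on a degree-$(\Lambda-1)$ curve hitting $K$ often, and then invokes Theorem~\ref{lem:mergermain} to contradict $|K| < 2^{(1-\delta)N}$. The only cosmetic differences are that the paper averages first over the uniform coordinate $\A_{i_0}$ and then fixes $\A_{-i_0}$ (rather than doing Markov on the full tuple and projecting), and that the paper plugs into the sharper form of Theorem~\ref{lem:mergermain} rather than the simplified $\lambda\geq\eta$ consequence; neither change affects the outcome or the seed-length arithmetic.
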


\subsection{The Curve Merger of \cite{DW08} and its analysis}

The merger that we consider is a very simple one proposed
by Dvir and Wigderson~\cite{DW08}, and we improve their
analysis using our extended method of multiplicities.
We note that they used the polynomial method in their
analysis; and the basic method of multiplicities doesn't seem
to improve their analysis.

The curve merger of \cite{DW08}, denoted $f_{\DW}$, is obtained as
follows. Let $q\geq \Lambda$ be a prime power, and let $n$ be any
integer. Let $\gamma_1, \ldots, \gamma_\Lambda \in \F_q$ be distinct,
and let $c_i(T) \in \F_q[T]$ be the unique degree $\Lambda-1$ polynomial
with $c_i(\gamma_i) = 1$ and for all $j \neq i$, $c_i(\gamma_j) =
0$. Then for any $\vec x = (\vec {x_1}, \ldots, \vec {x_\Lambda}) \in
(\F_q^n)^\Lambda$ and $u \in \F_q$, the curve merger $f_{\DW}$ maps
$(\F_q^n)^\Lambda \times \F_q$ to $\F_q^n$ as follows:
$$f_{\DW}((\vec {x_1}, \ldots, \vec {x_\Lambda}), u) = \sum_{i= 1}^{\Lambda} c_i(u) \vec{x_i}.$$
In other words,
$f_{\DW}((\vec {x_1}, \ldots, \vec {x_\Lambda}), u)$
picks the (canonical) curve passing through
$\vec {x_1},\ldots,\vec {x_\Lambda}$ and outputs the $u$th point on
the curve..

\begin{theorem}
\label{thm:clean} Let $q \geq \Lambda$ and $\A$ be somewhere-random
source taking values in $(\F_q^n)^\Lambda$. Let $\B$ be distributed
uniformly over $\F_q$, with $\A, \B$ independent. Let $\C = f_{\DW}(\A, \B)$. Then for $$q
\geq \left(\frac {2\Lambda}{\epsilon}\right)^{\frac{1}{\delta}},$$ $\C$
is $\epsilon$-close to having min-entropy $(1-\delta) \cdot n
\cdot \log_2 q$.
\end{theorem}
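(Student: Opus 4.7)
The plan is to argue by contradiction using the curve-version of the statistical Kakeya theorem (Theorem~\ref{lem:mergermain}) as a black box. Two easy reductions set things up. First, because being $\epsilon$-close to a fixed-min-entropy distribution is a convex property and a somewhere-random source is by definition a convex combination of simple ones, we may assume $\A$ is simple, with some distinguished coordinate $\A_{i_0}$ distributed uniformly on $\F_q^n$ and independent of the remaining coordinates $\A_{-i_0}$. Second, by the standard equivalent characterization of closeness to min-entropy, if $\C = f_{\DW}(\A,\B)$ is $\epsilon$-far from having min-entropy $m := (1-\delta)\cdot n\log_2 q$, then there is a set $K \subseteq \F_q^n$ with $|K| < 2^m = q^{(1-\delta)n}$ and $\Pr[\C \in K] > \epsilon$ (take $K$ to be the ``heavy'' elements whose $\C$-probability exceeds $q^{-(1-\delta)n}$).

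The heart of the proof is to reinterpret this as a Kakeya-for-curves statement. For each pair $(\vec x, \A_{-i_0})$, the map $t \mapsto f_{\DW}((\vec x, \A_{-i_0}), t)$ is a degree-$(\Lambda-1)$ curve passing through $\vec x$ at $t = \gamma_{i_0}$. Let $p(\vec x, \A_{-i_0})$ denote the fraction of the $q$ points of this curve that lie in $K$. Because $\A_{i_0}$ is uniform and independent of $(\A_{-i_0}, \B)$, the expectation of $p$ over uniform $\vec x$ and $\A_{-i_0}$ equals $\Pr[\C \in K] > \epsilon$. Markov's inequality then gives $\Pr_{\vec x, \A_{-i_0}}[p > \epsilon/2] > \epsilon/2$, so the set
$$S = \{\vec x \in \F_q^n : \exists\, \A_{-i_0}\text{ with } p(\vec x, \A_{-i_0}) > \epsilon/2\}$$
has size at least $(\epsilon/2)q^n$, and each $\vec x \in S$ admits a degree-$(\Lambda-1)$ curve through it meeting $K$ in more than $(\epsilon/2)q$ points.

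We are now in the setup of Theorem~\ref{lem:mergermain} with $\lambda = \eta = \epsilon/2$ and curve degree $\Lambda-1$. The hypothesis $q \geq (2\Lambda/\epsilon)^{1/\delta}$ (WLOG $\delta \leq 1$, else the conclusion is vacuous) gives $q \geq 2\Lambda/\epsilon$, which in particular verifies the required $\eta q > \Lambda-1$. Theorem~\ref{lem:mergermain} then yields
$$|K| \;\geq\; \left(\frac{\eta q}{\Lambda}\right)^n \;=\; \left(\frac{\epsilon q}{2\Lambda}\right)^n \;\geq\; q^{(1-\delta)n},$$
where the last inequality uses exactly the assumed lower bound on $q$; this contradicts $|K| < q^{(1-\delta)n}$. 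The one place that needs care is the averaging step, since each factor-of-$2$ loss incurred by Markov is exponentiated by $1/\delta$ in the final seed bound; balancing the Markov threshold against the statistical Kakeya exponent is what produces the clean seed length $\frac{1}{\delta}\log_2(2\Lambda/\epsilon)$.
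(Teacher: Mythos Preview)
Your proof is correct and follows essentially the same route as the paper's: reduce to a simple somewhere-random source, use the characterization of $\epsilon$-farness from min-entropy to produce a small set $K$ hit with probability exceeding $\epsilon$, average to obtain a large set $S$ of points each admitting a degree-$(\Lambda-1)$ curve meeting $K$ in an $\epsilon/2$-fraction of its points, and invoke Theorem~\ref{lem:mergermain} with $\lambda=\eta=\epsilon/2$ for the contradiction. One small inaccuracy worth fixing: the definition of a simple somewhere-random source only guarantees that $\A_{i_0}$ is \emph{marginally} uniform, not that it is independent of $\A_{-i_0}$; your argument in fact only uses the marginal uniformity (the averaging should be taken over the true joint distribution of $\A$, and then the projection onto $\A_{i_0}$ yields $|S|\geq(\epsilon/2)q^n$), so the slip is harmless once rephrased.
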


Theorem~\ref{thm:themerger} easily follows from the above.
We note that \cite{DW08} proved a similar theorem assuming
$q \geq \poly(n,\Lambda)$, forcing their seed length to grow logarithmically
with $n$ as well.

\begin{proofof}{Theorem~\ref{thm:themerger}}
Let $q = 2^d$, so that $q \geq \left(\frac {2\Lambda}{\epsilon}\right)^{\frac{1}{\delta}}$, and let $n = N/d$. Then we may identify identify $\F_q$ with $\{0,1\}^d$ and $\F_q^n$ with $\{0,1\}^N$.
Take $f$ to be the function $f_{\DW}$ given earlier. Clearly $f$ is computable in the claimed time.
Theorem~\ref{thm:clean} shows that $f$ has the required merger property.
\end{proofof}

We now prove Theorem~\ref{thm:clean}.

\begin{proofof}{Theorem~\ref{thm:clean}}
Without loss of generality, we may assume that $\A$ is a simple somewhere-random source.
Let $m = \left( 1- \delta \right)\cdot n\cdot \log_2 q$.
We wish to show that $f_{\DW}(\A, \vec \B)$ is $\epsilon$-close to having min-entropy $m$.

Suppose not. Then there is a set $K \subseteq \F_q^{n}$ with $|K| \leq 2^m = q^{(1-\delta)\cdot n} \leq \left(\frac{\epsilon q}{2\Lambda}\right)^{n}$ such that
$$\Pr_{\A, \B} [f(\A, \B) \in K] \geq \epsilon.$$

Suppose $\A_{i_0}$ is uniformly distributed over $\F_q^n$. Let
$\A_{-i_0}$ denote the random variable $$(\A_1, \ldots,
\A_{i_0-1}, \A_{i_0+1}, \ldots, \A_\Lambda).$$ By an averaging
argument, with probability at least $\lambda = \epsilon/2$ over
the choice of $\A_{i_0}$, we have
$$\Pr_{\A_{-i_0}, \B} [f(\A, \B) \in K] \geq \eta,$$
where $\eta = \epsilon/2$.
Since $\A_{i_0}$ is uniformly distributed over $\F_q^n$, we conclude that there is a set $S$ of
cardinality at least $\lambda q^n$ such that for any
$\vec x \in S$,
$$\Pr_{\A, \B } [f( \A, \B) \in K \mid \A_{i_0} = \vec x] \geq \eta.$$
Fixing the values of $\A_{-i_0}$, we conclude that for each $\vec x \in S$,
there is a $\vec y = \vec y(\vec x) = (\vec y_1, \ldots, \vec y_\Lambda)$ with $\vec y_{i_0} = x$ such that $\Pr_{\B}[f(\vec y, \B) \in K] \geq \eta$.
Define the degree $\Lambda-1$ curve $C_{\vec x}(T) = f(\vec y(\vec x), T) = \sum_{j=1}^\Lambda \vec y_j c_j(T)$.
Then $C_{\vec x}$ passes through $\vec x$, since $C_{\vec x}(\gamma_{i_0}) = \sum_{j=1}^\Lambda \vec y_j c_j(\gamma_{i_0}) = \vec y_{i_0} = \vec x$, and $\Pr_{\B \in \F_q}[C_{\vec x}(\B) \in K] \geq \eta$ by definition of $C_{\vec x}$.

Thus $S$ and $K$ satisfy the hypothesis of Theorem~\ref{lem:mergermain}. We now conclude that
$$|K| \geq \left(\frac{\lambda q}{(\Lambda-1) \left(\frac{\lambda q - 1}{\eta q}\right) + 1}\right)^n = \left(\frac{\epsilon q/2}{ \Lambda - (\Lambda-1)/\eta q}\right)^n > \left(\frac{\epsilon q}{2\Lambda} \right)^n.
$$ This is a contradiction, and the proof of the theorem is complete.
\end{proofof}

\paragraph{The Somewhere-High-Entropy case:}
It is possible to extend the merger analysis given above also to
the case of {\em somewhere-high-entropy} sources. In this scenario
the source is comprised of blocks, one of which has min entropy at
least $r$. One can then prove an analog of Theorem~\ref{thm:clean}
saying that the output of $f_{DW}$ will be close to having min
entropy $(1-\delta)\cdot r$ under essentially  the same conditions
on $q$. The proof is done by hashing the source using a random
linear function into a smaller dimensional space and then applying
Theorem~\ref{thm:clean} (in a black box manner). The reason why
this works is that the merger commutes with the linear map (for
details see \cite{DW08}).

\section{Extractors with sub-linear entropy loss}
\label{sec:extractor}

In this section we use our improved analysis of the Curve Merger
to show the existence of an explicit extractor
with logarithmic seed and sub linear entropy loss.

We will call a random variable $\X$ distributed over $\omm^n$ with
min-entropy $k$ an $(n,k)$-{\sf source}.

\begin{definition}[Extractor]
A function $E : \omm^n \times \omm^d \mapsto \omm^m$ is a
$(k,\eps)$-{\sf extractor} if for every $(n,k)$-source $\X$, the
distribution of $E(\X,\U_d)$ is $\eps$-close to uniform, where $\U_d$
is a random variable distributed uniformly over $\omm^d$, and $\X, \U_d$
are independent. An extractor
is called {\em explicit} if it can be computed in polynomial time.
\end{definition}

It is common to refer to the quantity $k-m$ in the above
definition as the {\em entropy loss} of the extractor. The next
theorem asserts the existence of an explicit extractor with
logarithmic seed and sub-linear entropy loss.

\begin{theorem}[Basic extractor with sub-linear entropy
loss]\label{thm-extractor} For every $c_1 \geq 1$, for all
positive integers $k <n$ with $k \geq \log^2(n)$, there exists an
explicit $(k,\eps)$-extractor $E : \omm^n \times \omm^d \mapsto
\omm^m$ with
\[ d = O(c_1 \cdot\log(n)), \]
\[ k-m = O\left( \frac{k\cdot \log\log(n)}{\log(n)} \right), \]
\[ \eps = O\left( \frac{1}{\log^{c_1}(n) } \right). \]
\end{theorem}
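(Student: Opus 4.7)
The plan is to compose three building blocks in the standard merger-based framework: a somewhere-high-entropy condenser (from prior work), the improved curve merger $f_{\DW}$ of Theorem~\ref{thm:themerger} (in the somewhere-high-entropy variant discussed at the end of Section~\ref{sec:merger}), and a final extractor for sources whose entropy rate is close to $1$. The crucial leverage is that our new merger has seed length independent of $N$, so the composition goes through with logarithmic total seed even when we push $\delta$ down to $o(1)$.

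First, given an $(n,k)$-source $\X$, I will apply an explicit somewhere-high-entropy condenser (e.g.\ the Raz-type or Barak--Rao--Shaltiel--Wigderson condenser) using $O(\log n)$ seed bits to produce a source $\Y=(\Y_1,\dots,\Y_\Lambda)$ on $(\{0,1\}^{N})^{\Lambda}$ with $N=\Theta(k)$, $\Lambda=\mathrm{polylog}(n)$, error $1/\mathrm{polylog}(n)$, and the guarantee that for some (unknown) index $i_0$, $\Y_{i_0}$ has min-entropy at least $(1-o(1))N$. Next, I will apply $f_{\DW}$ with parameters $\delta=\Theta(\log\log n/\log n)$ and $\eps_{\mathrm{merger}}=1/\log^{c_1} n$; by Theorem~\ref{thm:themerger} and its somewhere-high-entropy extension, this uses seed length $\frac{1}{\delta}\log_2(2\Lambda/\eps_{\mathrm{merger}})=O\bigl(\frac{\log n}{\log\log n}\cdot(\log\Lambda+\log(1/\eps_{\mathrm{merger}}))\bigr)=O(c_1\log n)$ and produces a source on $\{0,1\}^{N}$ that is $\eps_{\mathrm{merger}}$-close to having min-entropy $(1-\delta)N$. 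Finally, I will apply an off-the-shelf extractor tuned for sources of entropy rate $1-o(1)$ (e.g.\ the GUV extractor) with $O(\log n)$ additional seed bits, incurring only an additive entropy loss of $O(\log(1/\eps))$ while driving the statistical distance to $O(1/\log^{c_1} n)$.

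For the parameter accounting: the total seed length is $O(\log n)+O(c_1\log n)+O(\log n)=O(c_1\log n)$; the final min-entropy is at least
\[
m \;\geq\; (1-\delta)N - O(\log(1/\eps)) \;=\; k - O\!\left(\tfrac{k\log\log n}{\log n}\right) - O(c_1\log\log n),
\]
and since $k\geq\log^2 n$ the additive $O(c_1\log\log n)$ term is absorbed into $O(k\log\log n/\log n)$, yielding the claimed entropy loss. The error is bounded by the sum of the three component errors, each of which is $O(1/\log^{c_1} n)$, giving the stated $\eps$.

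The main obstacle is of a bookkeeping kind rather than a conceptual one: checking that an existing condenser simultaneously achieves $\Lambda=\mathrm{polylog}(n)$, seed length $O(\log n)$, and ``good-block'' min-entropy rate $1-o(1)$, so that plugging into our merger leaves the seed budget intact. A secondary technical point is to verify that the somewhere-high-entropy version of Theorem~\ref{thm:clean} (obtained via the linear-hashing reduction sketched after the proof of Theorem~\ref{thm:clean}) preserves the seed-length bound that we need, rather than only applying to the somewhere-random case.
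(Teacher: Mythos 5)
Your high-level architecture (somewhere-X source producer $\rightarrow$ improved merger $\rightarrow$ high-entropy-rate extractor, with seed budget saved because the merger seed is independent of $N$) matches the paper, but you take a shortcut at the first and second stages that introduces real gaps.

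First, you invoke a ``somewhere-high-entropy condenser'' as an off-the-shelf primitive with $\Lambda = \mathrm{polylog}(n)$ blocks of length $\Theta(k)$, good-block min-entropy rate $1-o(1)$, and $O(\log n)$ seed. No such object is cited, and its existence with \emph{all} these parameters is not something you should take for granted: condensers of BRSW/Raz type typically give either far more blocks or far weaker guarantees on the good block's rate, and the number of blocks enters the merger seed as $\frac{1}{\delta}\log\Lambda$ -- with $\delta = \Theta(\log\log n/\log n)$, even $\Lambda = n^{\Omega(1)}$ would make the merger seed $\omega(\log n)$ and break the bound. The paper does not assume this primitive; it \emph{builds} it in three explicit steps: the GUV lossless condenser (to reduce to length $2k$ with full min-entropy), Ta-Shma's deterministic partition into the $\Lambda = \log n$ prefix/suffix divisions (which produces a somewhere-\emph{block} source), and the RSW block-source extractor applied to each division with a shared seed. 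Crucially this yields a somewhere-\emph{random} source (the good block is close to uniform), not merely a somewhere-high-entropy one, so Theorem~\ref{thm:themerger} applies directly and you never need the somewhere-high-entropy extension of the merger, which the paper only sketches informally and which your plan leans on as a load-bearing step.

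Second, your accounting for the final stage -- ``incurring only an additive entropy loss of $O(\log(1/\eps))$'' -- is not achievable with logarithmic seed in this entropy regime; near-lossless extraction with $O(\log n)$ seed (GUV/RRV style) is only known when $k$ is small (e.g.\ $2^{O(\sqrt{\log n})}$), which is exactly why the paper relegates that route to the remark after Theorem~\ref{thm-better-extractor}. What is actually available for a length-$k'$ source of min-entropy $(1-\delta)k'$ with $O(\log k')$ seed (Lemma~\ref{lem-exthigh}, built from the RSW block extractor) loses a $\Theta(\delta)$ \emph{fraction}, i.e.\ $\Theta(\delta k')$ bits. This happens to be of the same order as the loss you already incur in the merger step, so the final bound survives, but the justification as written is wrong and should be replaced with the multiplicative-loss argument.
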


The extractor of this theorem is constructed by composing several
known explicit constructions of pseudorandom objects with the
merger of Theorem~\ref{thm:themerger}. In Section~\ref{sec-basic}
we describe the construction of our basic extractor. We then show,
in Section~\ref{sec-final} how to use the 'repeated extraction'
technique of Wigderson and Zuckerman \cite{WZ} to boost this
extractor and reduce the entropy loss to $k-m = O(k/\log^{c}n)$
for any constant $c$ (while keeping the seed logarithmic). The end
result is the following theorem:

\begin{theorem}[Final extractor with sub-linear entropy
loss]\label{thm-final-extractor} For every $c_1, c_2 \geq 1$, for
all positive integers $k < n$, there exists an explicit
$(k,\eps)$-extractor $E : \omm^n \times \omm^d \mapsto \omm^m$
with
\[ d = O(c_1c_2 \cdot \log(n)), \]
\[ k-m = O\left( \frac{k}{\log^{c_2}(n)} \right), \]
\[ \eps = O\left( \frac{1}{\log^{c_1}(n) } \right). \]
\end{theorem}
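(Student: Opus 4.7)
The plan is to boost the basic extractor of Theorem~\ref{thm-extractor} via the Wigderson--Zuckerman repeated-extraction paradigm: invoke that extractor $t = O(c_2)$ times on the source $X$, using independent fresh seeds, and concatenate the outputs.

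Concretely, for a sufficiently large constant $c_1'$ (depending on $c_1$ and $c_2$), let $E_0 \colon \omm^n \times \omm^{d_0} \to \omm^{m_0}$ be the basic $(k, \epsilon_0)$-extractor of Theorem~\ref{thm-extractor} with $d_0 = O(c_1' \log n)$, $\epsilon_0 = 1/\log^{c_1'} n$, and $k - m_0 = O(k \log\log n / \log n)$. Parse the seed of $E$ as $Y = (Y_1, \ldots, Y_t)$ with each $Y_j \in \omm^{d_0}$ independent, and define
\[
E(X, Y) \;=\; \bigl( E_0(X, Y_1),\; E_0(X, Y_2),\; \ldots,\; E_0(X, Y_t) \bigr).
\]

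The analysis rests on the classical ``leftover entropy'' lemma: if $E_0$ is a $(k, \epsilon_0)$-extractor and $Z = E_0(X, Y)$, then for all but an $O(\sqrt{\epsilon_0})$ fraction of $(y, z)$, the source $X$ conditioned on $(Y, Z) = (y, z)$ retains min-entropy at least $k - m_0 - O(\log(1/\epsilon_0))$. Iterating with fresh independent seeds, the residual min-entropy after $j$ rounds satisfies $k_{j+1} = O(k_j \log\log n / \log n) + O(\log(1/\epsilon_0))$. Because the multiplicative factor $\log\log n / \log n$ contracts the residual geometrically, taking $t = c_2 + O(1)$ rounds drives the residual below $O(k / \log^{c_2} n)$, which is the target entropy loss; the concatenated output is then close to uniform by a standard hybrid argument with $t$ hybrids, swapping each block $E_0(X, Y_j)$ for a fresh uniform block one at a time, incurring additive error $\epsilon_0$ at each step.

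The main technical obstacle is choosing $c_1'$ so that the accumulated error and the additive cost of the leftover-entropy lemma do not swamp the targets. We need $t \cdot \epsilon_0 = O(c_2/\log^{c_1'} n) \le O(1/\log^{c_1} n)$, which is immediate for $c_1' \ge c_1$ (absorbing $c_2$ into the hidden constant); and we need $t \cdot O(\log(1/\epsilon_0)) = O(c_1' c_2 \log\log n)$ to remain negligible compared to $k / \log^{c_2} n$, which holds comfortably in the regime where the bound $O(k/\log^{c_2} n)$ is nontrivial (and in the degenerate small-$k$ regime the claim is handled by simply shrinking $m$). Putting everything together yields an explicit extractor with seed length $t \cdot d_0 = O(c_1 c_2 \log n)$, error $O(1/\log^{c_1} n)$, and entropy loss $O(k/\log^{c_2} n)$, as promised by Theorem~\ref{thm-final-extractor}.
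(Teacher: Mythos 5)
Your plan captures the right high-level idea --- Wigderson--Zuckerman repeated extraction, geometrically contracting residual entropy, hybrid argument for accumulated error --- but the concrete construction as written cannot work, and the flaw is not a small one.

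You define $E(X,Y) = \bigl(E_0(X,Y_1),\ldots,E_0(X,Y_t)\bigr)$ using \emph{the same} extractor $E_0$ in every round, where $E_0$ is the $(k,\epsilon_0)$-extractor of Theorem~\ref{thm-extractor} with output length $m_0 = k - O(k\log\log n/\log n)$. This fails for two related reasons. First, the total output length is $t\cdot m_0 \approx t\cdot k$, which for $t \ge 2$ already exceeds the min-entropy $k$ of the input $\X$; an output exceeding the input's entropy cannot be close to uniform regardless of the analysis. Second, and more to the point, the hypothesis of the Wigderson--Zuckerman composition (Theorem~\ref{thm-wz} in the paper) requires the $i$-th extractor in the chain to be a strong extractor for min-entropy $k - (m_1 + \cdots + m_{i-1}) - r$. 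After one round, the residual entropy drops to roughly $k_1 = O(k\log\log n/\log n) \ll k$, yet $E_0$ is only guaranteed to work when the source has min-entropy at least $k$. Your own recursion $k_{j+1} = O(k_j\log\log n/\log n) + O(\log(1/\epsilon_0))$ tacitly assumes that round $j+1$ extracts roughly $k_j(1 - O(\log\log n/\log n))$ bits from a source of min-entropy $k_j$, but that requires instantiating Theorem~\ref{thm-extractor} afresh in round $j+1$ with the parameter $k$ set to $k_j$ (and correspondingly smaller output length $m_j$), not reusing the fixed $E_0$. The paper does exactly this: it first converts the extractor to a strong one via the generic transformation of \cite[Theorem~8.2]{RSW00}, then in round $i$ of the Wigderson--Zuckerman iteration it uses a strong $(k - m_{i-1} - c_1\log n,\,1/\log^{c_1}n)$-extractor instantiated at the shrinking entropy threshold, so the output lengths telescope to roughly $k$ rather than blowing up to $tk$.

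There is also a gap at small $k$. Theorem~\ref{thm-extractor} itself only applies for $k \geq \log^2 n$, and the additive $O(\log n)$ terms accumulated during the iteration force the boosted construction (the paper's Theorem~\ref{thm-better-extractor}) to require $k = \log^{\omega(1)}(n)$. You dismiss the small-$k$ regime by ``simply shrinking $m$,'' but the theorem statement pins down $m \ge k - O(k/\log^{c_2}n)$, so $m$ is not a free parameter to shrink. The paper handles this regime by invoking a separate construction (the GUV/RRV extractor, Theorem~\ref{thm-guv-rrv}) which extracts essentially all the min-entropy with $O(\log n)$ seed when $k = 2^{O(\sqrt{\log n})}$, and stitching the two regimes together. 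Without some such argument the proof is incomplete even if the main iteration were repaired.
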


\subsection{Proof of Theorem~\ref{thm-extractor}}\label{sec-basic}

Note that we may equivalently view an extractor $E : \{0,1\}^n
\times \{0,1\}^d \rightarrow \{0,1\}^m$ as a randomized algorithm
$E : \{0, 1\}^n \rightarrow \{0,1\}^m$ which is allowed to use $d$
uniformly random bits. We will present the extractor $E$ as such
an algorithm which takes 5 major steps.

Before giving the formal proof we give a high level description of
our extractor. Our first step is to apply the lossless condenser
of \cite{GUV} to output a string of length $2k$ with min entropy
$k$ (thus reducing our problem to the case $k = \Omega(n)$). The
construction continues along the lines of \cite{DW08}. In the
second step, we partition our source (now of length $n' = 2k$)
into $\Lambda = \log(n)$ consecutive blocks $X_1,\ldots,X_\Lambda
\in \omm^{n'/\Lambda}$ of equal length. We then consider the
$\Lambda$ possible divisions of the source into a prefix of $j$
blocks and suffix of $\Lambda-j$ blocks for $j$ between $1$ and
$\Lambda$. By a result of Ta-Shma~\cite{TS}, after passing to a
convex combination, one of these divisions is a $(k',k_2)$ block
source with $k'$ being at least $k - O(k/\Lambda)$ and $k_2$ being
at least poly-logarithmic in $k$. In the third step we use a block
source extractor (from \cite{RSW00}) on each one of the possible
$\Lambda$ divisions (using the same seed for each division) to
obtain a somewhere random source with block length $k'$. The
fourth step is to merge this somewhere random source into a single
block of length $k'$ and entropy $k'\cdot (1-\delta)$ with
$\delta$ sub-constant. In view of our new merger parameters, and
the fact that $\Lambda$ (the number of blocks) is small enough, we
can get away with choosing $\delta = \log\log(n) / \log(n)$ and
keeping the seed logarithmic and the error poly-logarithmic. To
finish the construction (the fifth step) we need to extract almost
all the entropy from a source of length $k'$ and entropy $k' \cdot
(1 - \delta)$. This can be done (using known techniques) with
logarithmic seed and  an additional entropy loss of $O(\delta
\cdot k')$.

We now formally prove Theorem~\ref{thm-extractor}. We begin by
reducing to the case where $n = O(k)$ using the lossless
condensers of \cite{GUV}.
\begin{theorem}[Lossless condenser \cite{GUV}]\label{thm-lossless}
For all integers positive $k < n$ with $k = \omega(\log(n))$, there exists an explicit function
$C_{\GUV} : \omm^n \times \omm^{d'} \mapsto \omm^{n'}$ with $n' = 2k$, $d' =
O(\log(n))$, such that for every $(n,k)$-source $\X$,
$C(\X, \U_{d'})$ is $(1/n)$-close to an $(n', k)$-source,
where $\U_{d'}$ is distributed uniformly over $\omm^{d'}$, and
$\X, \U_{d'}$ are independent.
\end{theorem}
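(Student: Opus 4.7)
The plan is to construct $C_{\GUV}$ as the Parvaresh--Vardy--style condenser of Guruswami--Umans--Vadhan, and then analyze it by reducing its condenser property to a list-recovery statement for the underlying code. Concretely, let $q$ be a power of $2$ with $q = n^{O(1)}$, so that $d' := \log_2 q = O(\log n)$; set $a = \lceil n/\log_2 q\rceil$, and identify an $(n,k)$-source with a polynomial $f_x \in \F_q[Y]$ of degree less than $a$ by reading off coefficients. Fix an explicitly computable irreducible polynomial $E(Y) \in \F_q[Y]$ of degree $a$ (available in deterministic $\poly(n)$ time), and an integer $h$ of the form $h = q^b$ for a parameter $b$ chosen below. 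For $m$ to be determined, define
\[ C_{\GUV}(x, y) \;=\; \bigl( y,\; f_x(y),\; f_x^{(1)}(y),\; \ldots,\; f_x^{(m-1)}(y) \bigr) \in \F_q^{m+1}, \]
where $f_x^{(i)}(Y)$ denotes the reduction of $f_x(Y)^{h^i}$ modulo $E(Y)$. Choose $m$ so that $(m+1)\log_2 q = 2k = n'$ and $\log_2 q = O(\log n)$.

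Next I would reduce the theorem to the following combinatorial statement: for every set $T \subseteq \F_q^{m+1}$ with $|T| \le 2^k$, the ``heavy-preimage'' set
\[ H_T \;=\; \set{ x \in \F_q^a : \Pr_{\,y}\bigl[\,C_{\GUV}(x,y) \in T\,\bigr] > 1/n } \]
has $|H_T| \le 2^k$. A standard reformulation (controlling the measure of the bad event ``output lands in a small target set'') shows this heavy-set property is equivalent, up to the desired $1/n$ error, to $C_{\GUV}(\X,\U_{d'})$ being $(1/n)$-close to some $(n', k)$-source for every $(n,k)$-source $\X$.

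The heart of the proof is bounding $|H_T|$ by a Parvaresh--Vardy interpolation argument. Working in the space of polynomials $Q(Y, Z_0, \ldots, Z_{m-1})$ of controlled degree in $Y$ and individual degree strictly less than $h$ in each $Z_i$, a dimension count shows that for $|T| \le 2^k$ and parameters chosen appropriately there is a non-zero such $Q$ vanishing on every point of $T$. For each $x \in H_T$, substitute to form $R_x(Y) := Q(Y,\, f_x(Y),\, f_x^{(1)}(Y), \ldots, f_x^{(m-1)}(Y))$. By construction $R_x$ vanishes at more than $q/n$ values of $y$, while its degree in $Y$ is engineered to be below $q/n$, so $R_x \equiv 0$, and in particular $R_x \equiv 0 \pmod{E(Y)}$. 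Using the defining relation $f_x^{(i)} \equiv f_x^{h^i} \pmod{E(Y)}$, this becomes a single nontrivial polynomial equation of degree less than $h^m$ in $f_x$ in the extension field $\F_q[Y]/(E(Y)) \cong \F_{q^a}$; hence $|H_T| \le h^m$, and parameters are set so that $h^m \le 2^k$.

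The main obstacle is parameter balancing: one must simultaneously arrange that (i) the dimension count against $|T| \le 2^k$ guarantees existence of $Q$, (ii) $\deg_Y R_x < q/n$ so that vanishing at a $(1/n)$-fraction of seeds forces $R_x \equiv 0$, (iii) $h^m \le 2^k$ so the final list size is tight, and (iv) $(m+1)\log_2 q = 2k$ with $\log_2 q = O(\log n)$. A single choice (roughly $h = q^{\Theta(1)}$ and $m \approx 2k/\log_2 q$) threads all four constraints using the hypothesis $k = \omega(\log n)$, and the delicate bookkeeping of these inequalities is the most technical step; too small an $h$ lets the PV list blow up beyond $2^k$, while too large an $h$ either inflates the output length past $2k$ or destroys the interpolation inequality.
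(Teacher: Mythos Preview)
The paper does not prove this theorem at all: it is quoted from \cite{GUV} and used as a black box in Step~1 of the extractor construction in Section~\ref{sec-basic}. There is therefore no ``paper's own proof'' to compare your proposal against.

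That said, what you have sketched is indeed the Guruswami--Umans--Vadhan argument in outline: the Parvaresh--Vardy encoding, the reduction of the condenser property to a list-recovery statement, the interpolation of a low-degree $Q$ vanishing on the target set, the substitution-and-degree argument forcing $R_x\equiv 0$, and the passage to $\F_q[Y]/(E(Y))$ to bound the number of heavy preimages. One point to be careful with: the combinatorial reduction you describe (bounding $|H_T|$ for every $T$ of size $\leq 2^k$) is closer to a disperser or ordinary condenser guarantee than to the \emph{lossless} condenser property actually needed here. The lossless statement in \cite{GUV} is phrased as an expansion property of the associated bipartite graph---every set $S$ of at most $2^k$ sources has neighborhood of size at least $(1-\epsilon)\,|S|\cdot q$---and it is this expansion that yields closeness to a source retaining the full $k$ bits of min-entropy. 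Your interpolation argument does establish the required expansion once formulated correctly, but the reduction step as you wrote it would need to be tightened to match.
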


\fbox{{\bf Step 1:} Pick $\U_{d'}$ uniformly from $\omm^{d'}$. Compute $\X' = C_{\GUV}(\X, \U_{d'})$.}

By the above theorem, $\X'$ is $(1/n)$-close to an $(n',
k)$-source, where $n' = 2k$. Our next goal is to produce a {\em
somewhere-block source}. We now define these formally.

\begin{definition}[Block Source]
Let $\X = (\X_1,\X_2)$ be a random source over $\omm^{n_1} \times
\omm^{n_2}$. We say that $\X$ is a $(k_1,k_2)$-{\sf block source}
if $\X_1$ is an $(n_1,k_1)$-source and for each $x_1 \in
\omm^{n_1}$ the conditional random variable $\X_2 | \X_1 = x_1$ is
an $(n_2,k_2)$-source.
\end{definition}

\begin{definition}[Somewhere-block source]
Let $\X = (\X_1,\ldots,\X_\Lambda)$ be a random variable such that
each $\X_i$ is distributed over $\omm^{n_{i,1}}\times
\omm^{n_{i,2}}$. We say that $\X$ is a {\sf simple
$(k_1,k_2)$-somewhere-block source} if there exists $i \in
[\Lambda]$ such that $\X_i$ is a $(k_1,k_2)$-block source. We say
that $\X$ is a {\sf somewhere-$(k_1,k_2)$-block source} if $\X$ is
a convex combination of simple somewhere random sources.
\end{definition}

%**********************************************************************

%Let $t = \log(n)$ and assume for simplicity of notation that $n'$
%is divisible by $t$. Let
%\[ X' = (X'_1,\ldots,X'_t) \in \left( \omm^{n'/t} \right)^t \]
%denote the partition of $X'$ into $t$ blocks. For every $1 \leq j
%\leq t-1$ we denote
%\[ Y_j = (X'_1,\ldots,X'_j), \]
%\[ Z_j = (X'_{j+1},\ldots,X'_{t}), \]

%We will require the use the more refined notion of a {\em
%somewhere block source} given by the following definition.

%The next lemma shows that (up to a convex combination) there
%exists $1 \leq j \leq t-1$ for which the pair $(Y_j,Z_j)$ is a
%block source with certain parameters.
%\begin{lemma}\label{lem-part}
%The source $\left( (Y_j,Z_j) \right)_{j \in [t-1]}$ is
%$\eps_2$-close to a somewhere $(k_1,k_2)$-block source with $k_2
%\geq \log^4(k)$ and $k_1 \geq k - O(n'/t)$.
%\end{lemma}

%**************************************************************************

We now state a result of Ta-Shma~\cite{TS} which converts an
arbitrary source into a somewhere-block source. This is the first
step in the proof of Theorem 1 on Page 44 of \cite{TS} (Theorem 1
shows how convert any arbitrary source to a somewhere-block
source, and then does more by showing how one could extract from
such a source).

Let $\Lambda$ be an integer and assume for simplicity of notation
that $n'$ is divisible by $\Lambda$. Let
\[ \X' = (\X'_1,\ldots,\X'_\Lambda) \in \left( \omm^{n'/\Lambda} \right)^\Lambda \]
denote the partition of $\X'$ into $\Lambda$ blocks. For every $1
\leq j < \Lambda$ we denote
\[ \Y_j = (\X'_1,\ldots,\X'_j), \]
\[ \ZZ_j = (\X'_{j+1},\ldots,\X'_{\Lambda}), \]

Consider the function $B^{\Lambda}_{\TS} : \{0,1\}^{n'} \rightarrow
(\omm^{n'})^{\Lambda}$, where $$B^{\Lambda}_{\TS}(X') = ((\Y_1,
\ZZ_1), (\Y_2, \ZZ_2), \ldots, (\Y_{\Lambda }, \ZZ_{\Lambda })).$$
The next theorem shows that the source $\left( (\Y_j,\ZZ_j)
\right)_{j \in [\Lambda]}$ is close to a somewhere-block source.

\begin{theorem}[\cite{TS}]\label{thm-TS}
Let $\Lambda$ be an integer. Let $k = k_1 + k_2 + s$. Then the
function $B^{\Lambda}_{\TS} : \{0,1\}^{n'} \rightarrow
(\omm^{n'})^{\Lambda}$ is such that for any $(n', k)$-source $\X'$,
letting $\X'' = B^{\Lambda}_{\TS}(\X')$, we have that $\X''$ is
$O(n\cdot2^{-s})$-close to a somewhere-$(k_1 - O(n'/\Lambda),
k_2)$-block source.
\end{theorem}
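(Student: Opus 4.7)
For each $x' \in \{0,1\}^{n'}$ in the support of $\X'$, consider the non-increasing sequence of prefix probabilities $p_j(x') := \prob[\Y_j = y_j(x')]$ for $j = 0, 1, \ldots, \Lambda$; note $p_0(x') = 1$ and $p_\Lambda(x') \le 2^{-k}$. Call $x'$ \emph{balanced at level $j$} if $p_j(x') \in [2^{-(k_1+s)},\ 2^{-(k_1 - n'/\Lambda)}]$. The point of this definition is that a balanced $x'$ witnesses both of the properties needed at level $j$ at once: the upper bound makes the prefix value $y_j(x')$ light (useful for prefix min-entropy $\ge k_1 - n'/\Lambda$), while the lower bound, combined with $\prob[\X' = x'] \le 2^{-k}$, forces $\prob[\ZZ_j = z \mid \Y_j = y_j(x')] \le 2^{-(k - k_1 - s)} = 2^{-k_2}$ for every $z$, giving conditional suffix min-entropy $\ge k_2$.

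The first step is to show that all but an $O(n \cdot 2^{-s})$ fraction of the $\X'$-mass is balanced at some level. An $x'$ balanced at no level must have its $p_j$ sequence ``jump over'' the balanced interval: since the sequence is monotone from $1$ down to $\le 2^{-(k_1+s)}$, there is some $j$ with $p_{j-1}(x') > 2^{-(k_1 - n'/\Lambda)}$ and $p_j(x') < 2^{-(k_1+s)}$. For fixed $j$ I will bound the mass of such jumps by a direct count: the number of heavy $(j-1)$-prefixes is strictly less than $2^{k_1 - n'/\Lambda}$ (since their probabilities sum to at most $1$), each admits at most $2^{n'/\Lambda}$ extensions, and each resulting light $j$-prefix contributes probability less than $2^{-(k_1+s)}$, for a total of $2^{-s}$. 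A union bound over $\Lambda \le n$ levels gives the claimed bound.

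Next, I assign each balanced $x'$ to the smallest level $j(x')$ at which it is balanced. This partitions the balanced mass into sets $W_1, \ldots, W_\Lambda$, with membership in $W_j$ depending only on the $j$-prefix $y_j(x')$. Letting $p_j := \prob[\X' \in W_j]$ and $\mu_j := (\X' \mid \X' \in W_j)$, I claim $B^{\Lambda}_{\TS}(\mu_j)$ is a simple somewhere-$(k_1 - O(n'/\Lambda), k_2)$-block source, witnessed by its $j$-th coordinate $(\Y_j, \ZZ_j)$. The conditional-suffix min-entropy $\ge k_2$ follows exactly as in the balanced-at-$j$ analysis, since conditioning on $W_j$ is the same as conditioning on $\Y_j$ lying in a certain set and so does not change $\prob[\ZZ_j = z \mid \Y_j = y]$. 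The min-entropy of $\Y_j$ under $\mu_j$ is at least $k_1 - n'/\Lambda - \log(1/p_j)$, obtained directly by dividing the bound $\prob[\Y_j = y] \le 2^{-(k_1 - n'/\Lambda)}$ by $p_j$.

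The one technical subtlety, and main obstacle, is the possibility that some $p_j$ is tiny, in which case the prefix min-entropy in $\mu_j$ degrades. I will handle this by simply discarding the components with $p_j < 2^{-s}$, incurring an additional $\Lambda \cdot 2^{-s} = O(n \cdot 2^{-s})$ loss in statistical distance; the surviving components then have prefix min-entropy at least $k_1 - n'/\Lambda - s$, which is $k_1 - O(n'/\Lambda)$ in the intended parameter regime where $s$ is comparable to (or smaller than) $n'/\Lambda$. Combining the two contributions to the error yields the claimed $O(n \cdot 2^{-s})$ bound, and the resulting convex combination of simple somewhere-block sources (plus the small error term) expresses $\X''$ as a somewhere-$(k_1 - O(n'/\Lambda), k_2)$-block source.
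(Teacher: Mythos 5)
The paper does not actually prove this theorem; it cites it as Ta-Shma's result (``This is the first step in the proof of Theorem 1 on Page 44 of \cite{TS}''), so there is no in-paper proof to compare against. Your argument is the standard Nisan--Zuckerman/Ta-Shma decomposition, and it is essentially correct: the balanced-interval definition, the jump-counting bound $2^{k_1 - n'/\Lambda} \cdot 2^{n'/\Lambda} \cdot 2^{-(k_1+s)} = 2^{-s}$ per level, the observation that $W_j$-membership is determined by the $j$-prefix (so conditioning on $W_j$ leaves the suffix's conditional distribution untouched), and the prefix min-entropy loss of $\log(1/p_j)$ under conditioning all check out. You also correctly note that for $k_1 > n'/\Lambda$ and $k_2 > 0$ the endpoints $p_0 = 1$ and $p_\Lambda \le 2^{-k}$ sit strictly outside the balanced window, so the jump argument is exhaustive.

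The one thing to be upfront about is the point you yourself flag: after discarding the components with $p_j < 2^{-s}$, the prefix min-entropy you obtain is $k_1 - n'/\Lambda - s$, so the theorem's stated bound $k_1 - O(n'/\Lambda)$ is recovered only when $s = O(n'/\Lambda)$. This is exactly the regime in which the paper invokes the theorem ($s = O(\log n)$, $n'/\Lambda = 2k/\log n \ge 2\log n$ since $k \ge \log^2 n$), and Ta-Shma's own statement carries the $-O(s)$ (equivalently, $-O(\log(1/\eps))$) loss explicitly; the paper's restatement simply absorbs it. So your proof establishes the result as it is used, and the ``caveat'' is a feature of the paraphrased theorem statement rather than a gap in your argument. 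One small cleanup worth making explicit: to turn ``all but $O(n\cdot 2^{-s})$ of the mass decomposes nicely'' into the statistical-distance claim, you should say you redistribute the discarded mass onto (say) one of the surviving components $W_j$; this changes the distribution by at most the discarded mass and yields a genuine convex combination of simple somewhere-block sources.
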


\fbox{ {\bf Step 2:} Set $\Lambda = \log(n)$. Compute $X'' =
(\X''_1, \X''_2, \ldots, \X''_\Lambda) = B^{\Lambda}_{\TS}(\X')$. }

Plugging $k_2 = O(\log^4(n')) = O(\log^4(k))$, $s = O(\log n)$ and
$k_1 = k - k_2 -s$ in the above theorem, we conclude that $\X''$
is $n^{-\Omega(1)}$-close to a somewhere-$(k', k_2)$-block source,
where $$k' = k_1 - O(n'/\log(n)) = k - k_2 - s - O(k/\log(n)) = k
- O(k/ \log(n)),$$ where for the last inequality we use the fact
that $k > log^2(n)$ and so both $s$ and $k_2$ are bounded by
$O(k/\log(n))$.

We next use the block source extractor from~\cite{RSW00} to convert
the above somewhere-block source to a somewhere-random source.

\begin{theorem}[\cite{RSW00}]\label{thm-RSW}
Let $n' = n_1 + n_2$ and let $k',k_2$ be such that $k_2 >
\log^4(n_1)$. Then there exists an explicit function $E_{\RSW} :
\omm^{n_1} \times \omm^{n_2} \times \omm^{d''} \mapsto \omm^{m''}$
with $m'' = k'$, $d'' = O(\log(n'))$, such that for any $(k',
k_2)$-block source $\X$, $E_{\RSW}(\X, \U_{d''})$ is
$(n_1)^{-\Omega(1)}$-close to the uniform distribution over
$\omm^{m''}$, where $\U_{d''}$ is distributed uniformly over
$\omm^{d''}$, and $\X, \U_{d''}$ are independent.
\end{theorem}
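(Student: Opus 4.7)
The plan is to invoke the standard block-source extraction paradigm, which reduces constructing an extractor for a $(k',k_2)$-block source $\X = (\X_1, \X_2)$ to constructing two ordinary extractors and composing them. The key observation is that in a block source $\X_2$ has min-entropy $k_2$ conditioned on every fixing of $\X_1$, so any extractor applied to $\X_2$ using a seed independent of $\X_1$ produces output that is close to uniform \emph{even conditioned on} $\X_1$. This lets us use the output as a fresh ``seed'' for a second extraction on $\X_1$.

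The construction I would give is as follows. First I would pick an ordinary extractor $E^{(2)} : \omm^{n_2} \times \omm^{d''} \mapsto \omm^{s}$ for $(n_2, k_2)$-sources with seed length $d'' = O(\log n')$, output length $s = O(\log n_1)$ (or a suitable $\polylog(n_1)$), and error $(n_1)^{-\Omega(1)}$; this is possible because the min-entropy bound $k_2 > \log^4(n_1)$ is well within the regime where known logarithmic-seed extractors operate. Compute $\Y = E^{(2)}(\X_2, \U_{d''})$. Then I would apply a second extractor $E^{(1)} : \omm^{n_1} \times \omm^{s} \mapsto \omm^{k'}$ designed to extract essentially all the entropy from an $(n_1, k')$-source, using the string $\Y$ as its (longer) seed, and output $E_{\RSW}(\X, \U_{d''}) = E^{(1)}(\X_1, \Y)$. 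A standard hybrid argument shows the output is $(n_1)^{-\Omega(1)}$-close to uniform: replacing $\Y$ by a truly uniform $\U_s$ costs only the extraction error of $E^{(2)}$ (by the block-source property), and then $E^{(1)}(\X_1, \U_s)$ is close to uniform by the guarantee of $E^{(1)}$.

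The main obstacle will be producing the inner extractor $E^{(1)}$ with the right parameters: it must extract \emph{all} $k'$ bits of min-entropy from $\X_1$ (so that $m'' = k'$, matching the theorem statement) using seed $s$ of only polylogarithmic length, and with error $(n_1)^{-\Omega(1)}$. A single off-the-shelf extractor typically suffers constant entropy loss, so one would need to either recurse (reducing the problem to smaller instances and composing, in the style of \cite{RSW00}), or combine a condenser step followed by a high-quality extractor on a condensed source. Since the statement of the theorem is cited verbatim from \cite{RSW00}, the cleanest ``proof'' is simply to invoke that construction; the sketch above is how one would approach rebuilding it from scratch, with the recursive block-source extractor of RSW handling the entropy-loss-free inner extraction as the technical crux.
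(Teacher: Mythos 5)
The paper gives no proof of this theorem; it is imported verbatim from \cite{RSW00}, and your proposal correctly treats it that way. Your sketch of the block-source composition paradigm (extract a short pseudorandom seed from $\X_2$, then use it to drive a near-zero-entropy-loss extractor on $\X_1$, with the repeated-condensing machinery of \cite{RSW00} supplying that inner extractor) is an accurate high-level account of the construction being cited, and you correctly flag the zero-entropy-loss inner step as the technical crux that RSW's recursion is responsible for.
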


Set $d'' = O( log(n'))$ as in Theorem~\ref{thm-RSW}.

\fbox{ {\bf Step 3:} Pick $\U_{d''}$ uniformly from $\omm^{d''}$.
For each $j \in [\Lambda]$, compute $\X'''_j = E_{\RSW}(\X''_j,
U_{d''})$. }

By the above theorem, $\X'''$ is ${n'}^{-\Omega(1)}$-close to a
somewhere-random source. We are now ready to use the merger $M$
from Theorem~\ref{thm:themerger}. We invoke that theorem with
entropy-loss $\delta = \log\log(n)/\log(n)$ and error $\epsilon =
\frac{1}{\log^{c_1}(n)}$, and hence $M$ has a seed length of
$$d''' = O(\frac{1}{\delta} \log \frac{\Lambda}{\epsilon}) = O(c_1
\log(n) ).$$

\fbox{
{\bf Step 4:} Pick $\U_{d'''}$ uniformly from $\omm^{d'''}$. Compute $\X'''' = M(\X''', \U_{d'''})$.
}

By Theorem~\ref{thm:themerger}, $\X''''$ is $O(\frac{1}{\log^{c_1}(n)})$-close to a
$(k', (1-\delta) k')$-source. Note that $\delta = o(1)$, and thus $\X''''$ has
nearly full entropy. We now apply an extractor for sources with extremely-high
entropy rate, given by the following lemma.
\begin{lemma}\label{lem-exthigh}
For any $k'$ and $\delta > 0$, there exists an explicit
$(k'(1-\delta), k'^{-\Omega(1)})$-extractor $E_{\HIGH} : \omm^{k'}
\times \omm^{d''''} \mapsto \omm^{(1-3\delta)k'}$ with $d'''' =
O(\log(k'))$.
\end{lemma}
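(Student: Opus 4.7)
The plan is to prove Lemma~\ref{lem-exthigh} by re-running the three-step template used in Theorem~\ref{thm-extractor} (Ta-Shma's somewhere-block-source conversion, followed by the RSW block-source extractor, followed by our new merger), but with parameters retuned for the high-entropy-rate regime. The point is that a source with min-entropy $(1-\delta)k'$ in $\{0,1\}^{k'}$ has entropy deficiency only $\delta k'$, so all the losses introduced by these primitives can be controlled to total at most $2\delta k'$.

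Concretely, given an input source $\X$ of min-entropy $(1-\delta)k'$, the construction proceeds as follows. First, I would apply $B^{\Lambda'}_{\TS}$ (Theorem~\ref{thm-TS}) with $\Lambda' = \Theta(1/\delta)$ blocks and with the parameters $k_2 = \log^4(k')$, $s = O(\log k')$, $k_1 = (1-\delta)k' - k_2 - s - O(k'/\Lambda')$. Since $k'/\Lambda' = O(\delta k')$, this gives $k_1 \geq (1-O(\delta))k'$, and the output is $k'^{-\Omega(1)}$-close to a somewhere-$(k_1, k_2)$-block source. Second, I would apply $E_{\RSW}$ (Theorem~\ref{thm-RSW}) with seed $O(\log k')$ simultaneously to each of the $\Lambda'$ candidate divisions, yielding a somewhere-random source with $\Lambda'$ blocks each of length $k_1 \geq (1-O(\delta))k'$, with additional error $k'^{-\Omega(1)}$. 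Third, I would apply the merger of Theorem~\ref{thm:themerger} with entropy-loss parameter $\delta' = \Theta(\delta)$ and error $\epsilon' = k'^{-\Omega(1)}$. The final output length is $(1-\delta')k_1 \geq (1-3\delta)k'$ after absorbing constants, and the total error is $k'^{-\Omega(1)}$.

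The main obstacle is the seed-length bookkeeping, particularly for the merger step, where the seed is $(1/\delta')\log(\Lambda'/\epsilon') = \Theta((\log k')/\delta)$. This is genuinely $O(\log k')$ only when $\delta = \Omega(1)$; for sub-constant $\delta$ (the regime we actually need for Theorem~\ref{thm-extractor}, where $\delta = \log\log(n)/\log(n)$), this grows super-logarithmically. To handle this regime, I would replace the final merger step with the ``somewhere-high-entropy'' variant discussed at the end of Section~\ref{sec:merger}: one first hashes the somewhere-random source into a shorter source of dimension proportional to $k_1/\delta$, and then applies the merger at this smaller scale, so that the merger seed becomes $O(\log(k_1/\delta)) = O(\log k')$. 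Verifying that this hash-then-merge combination preserves the min-entropy claim $(1-3\delta)k'$ with error $k'^{-\Omega(1)}$ is the principal technical step, and it relies crucially on the fact that the hashing commutes with the linear structure of the Dvir--Wigderson merger.
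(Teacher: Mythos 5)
Your proposal takes a fundamentally different and much more elaborate route than the paper, and it has two genuine gaps. The paper's proof is a one-liner: because $\X$ has entropy deficiency only $\delta k'$, the \emph{single fixed} partition of $\X$ into a long prefix of length $k' - \delta k' - \log^4 k'$ and a short suffix of length $\delta k' + \log^4 k'$ is already (close to) a $(k'(1-2\delta) - \log^4 k', \log^4 k')$-block source, by the chain rule for min-entropy. One then applies $E_{\RSW}$ (Theorem~\ref{thm-RSW}) once to this two-block source and is done: seed $O(\log k')$, output $(1-2\delta)k' - \log^4 k' \geq (1-3\delta)k'$ bits, error $k'^{-\Omega(1)}$. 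The entire reason Ta-Shma's $B^{\Lambda}_{\TS}$ and the merger are needed in Theorem~\ref{thm-extractor} is that for a general $(n',k)$-source one does not know where the entropy sits, so one must hedge over $\Lambda$ split points and re-merge; once the rate is $1-\delta$, every split point works and the hedging is unnecessary.

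Beyond being an overcomplication, your pipeline has two concrete problems. First, after the merger step the output is only $\epsilon'$-close to having min-entropy $(1-\delta')k_1$ over $\{0,1\}^{k_1}$; it is \emph{not} close to uniform over $\{0,1\}^{(1-\delta')k_1}$. To finish, you would need to extract nearly all the entropy from a source of rate $1-\delta'$ with logarithmic seed --- which is precisely the statement of Lemma~\ref{lem-exthigh} you are trying to prove, so the construction is circular as written. Second, the proposed fix for the merger seed length does not work: the seed length $d''' = \frac{1}{\delta'}\log_2(2\Lambda'/\epsilon')$ of the merger in Theorem~\ref{thm:themerger} depends only on $\delta'$, $\Lambda'$, and $\epsilon'$ and is \emph{independent} of the ambient dimension $n$ (equivalently, of the block length $N$). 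Hashing the blocks down to a smaller dimension, as in the somewhere-high-entropy discussion of Section~\ref{sec:merger}, changes $N$ but leaves $d'''$ untouched, so for sub-constant $\delta$ the seed remains $\Theta((\log k')/\delta) = \omega(\log k')$. That discussion addresses blocks with high-but-not-full entropy, not seed length.
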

The proof of this lemma follows easily from Theorem~\ref{thm-RSW}.
Roughly speaking, the input is partitioned into blocks of length
$k' - \delta k - \log^4 k'$ and $\delta k' + \log^4 k'$. It
follows that this partition is close to a $(k' (1-2\delta) -
\log^4 k', \log^4 k')$-block source. This block source is then
passed through the block-source extractor of
Theorem~\ref{thm-RSW}.

\fbox{ {\bf Step 5:} Pick $\U_{d''''}$ uniformly from
$\omm^{d''''}$. Compute $\X''''' = E_{\HIGH}(\X'''', \U_{d''''})$.
{\bf \it Output $\X'''''$}. }

This completes the description of the extractor $E$. It remains to
note that $d$, the total number of random bits used, is at most
$d' + d'' + d''' + d'''' = O(c_1 \log n)$, and that the output
$\X'''''$ is $O(\frac{1}{\log^{c_1} n})$-close to uniformly
distributed over $$\omm^{(1-3 \delta)k'} = \omm^{k - O(k \cdot
\frac{\log \log n}{\log n})}.$$ This completes the proof of
Theorem~\ref{thm-extractor}.

We summarize the transformations in the following table:
\begin{center}
\begin{tabular}{|c|c|c|c|}
\hline
Function & Seed length & Input-type & Output-type\\
\hline
$C_{\GUV}$ & $O(\log(n))$ & $(n,k)$-source & $(2k, k)$-source\\
$B^{\Lambda}_{\TS}$ &  0 & $(2k, k)$-source & somewhere-$(k', \log^4(k))$-block \\
$E_{\RSW}$ & $O(\log(k))$ & somewhere-$(k', \log^4(k))$-block & $(k', O(\log(n)))$-somewhere-random\\
$M$ & $O(\log (n))$ &  $(k', O(\log(n)))$-somewhere-random & $(k', k' - o(k))$-source\\
$E_{\HIGH}$ & $O(\log (k))$ & $(k', k'-o(k))$-source & $\U_{k'-o(k)}$\\
\hline
\end{tabular}
\end{center}

%In view of the above, we have:
%\begin{claim}\label{cla-W}
%The source $(W_1,\ldots,W_{t-1}) \in \left( \omm^{k_1}
%\right)^{t-1}$ is $\eps_3$-close to a somewhere random source,
%with $\eps_3 = n'^{-\Omega(1)}$.
%\end{claim}

\subsection{Improving the output length by repeated extraction}\label{sec-final}
We now use some ideas from \cite{RSW00} and \cite{WZ} to extract
an even larger fraction of the min-entropy out of the source. This
will prove Theorem~\ref{thm-final-extractor}. We first prove a
variant of the theorem with a restriction on $k$. This restriction
will be later removed using known constructions of extractors for
low min-entropy.

\begin{theorem}[Explicit extractor with improved sub-linear entropy
loss]\label{thm-better-extractor} For every $c_1, c_2 \geq 1$, for all positive integers $k < n$ with $k = \log^{\omega(1)}(n)$,
there exists an explicit $(k,\eps)$-extractor $E : \omm^n \times \omm^d \mapsto
\omm^m$ with
\[ d = O(c_1c_2 \cdot \log(n)), \]
\[ k-m = O\left( \frac{k}{\log^{c_2}(n)} \right), \]
\[ \eps = O\left( \frac{1}{\log^{c_1}(n) } \right). \]
\end{theorem}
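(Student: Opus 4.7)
The plan is to apply the basic extractor $E_0$ of Theorem~\ref{thm-extractor} iteratively with independent fresh seeds, in the spirit of the repeated-extraction technique of \cite{WZ,RSW00}: each round extracts from the randomness that remains in $\X$ after the previous rounds, so that the unextracted ``slack'' shrinks geometrically. Instantiate $E_0$ with the constant $c_1' = 2c_1$, so that for any min-entropy parameter $k_* \geq \log^2(n)$ it is a $(k_*, \eps_0)$-extractor with seed length $d_0 = O(c_1 \log n)$, error $\eps_0 = 1/\log^{2c_1}(n)$, and extracts $(1-\alpha) k_*$ bits where $\alpha = O(\log\log n /\log n) = o(1)$. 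Set $k_0 = k$. For $i = 1, \ldots, T$, pick a fresh independent seed $\U^{(i)}$ of length $d_0$ and apply $E_0$ (with min-entropy parameter $k_{i-1}$) to $\X$ using seed $\U^{(i)}$ to produce $\Y_i$ of length $m_i = (1-\alpha)k_{i-1}$, and set $k_i = \alpha k_{i-1} - O(\log 1/\eps_0)$. The final output is $\Y = (\Y_1, \ldots, \Y_T)$ on overall seed $\U = (\U^{(1)}, \ldots, \U^{(T)})$.

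The analysis rests on the standard residual-min-entropy lemma: if $E_0$ is a $(k_*, \eps_0)$-extractor and $\X$ has min-entropy $k_*$, then with probability $\geq 1 - \sqrt{\eps_0}$ over $(\U, \Y = E_0(\X,\U))$, the conditional source $\X \mid (\U, \Y)$ has min-entropy at least $k_* - m - O(\log 1/\eps_0)$. Applied inductively across the rounds, the conditional source $\X \mid (\U^{(1)},\Y_1,\ldots,\U^{(i-1)},\Y_{i-1})$ has min-entropy at least $k_{i-1}$ (with cumulative failure probability $O(T\sqrt{\eps_0})$), so $E_0$'s extractor guarantee yields that $\Y_i$ is $\eps_0$-close to uniform conditioned on this history. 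A $T$-step hybrid argument then bounds the distance of $\Y$ from uniform by $O(T(\eps_0 + \sqrt{\eps_0})) = O(1/\log^{c_1}(n))$.

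The residual entropies decay geometrically, $k_i = O(k \alpha^i) + O(\log n)$, so taking $T$ to be a sufficiently large constant multiple of $c_2$ yields $k_T = O(k/\log^{c_2}(n))$. The hypothesis $k = \log^{\omega(1)}(n)$ is used twice: to ensure $k_i \geq \log^2(n)$ for every $i \leq T$ so that Theorem~\ref{thm-extractor} is applicable in each round, and to absorb the additive $O(\log n)$ term into $O(k/\log^{c_2}(n))$. Combining, the construction has output length $k - k_T = k - O(k/\log^{c_2}(n))$, seed length $T d_0 = O(c_1 c_2 \log n)$, and error $O(1/\log^{c_1}(n))$, matching the claimed parameters. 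The main technical obstacle is the precise residual-min-entropy bookkeeping combined with the hybrid argument, ensuring that the entropy not extracted in round $i$ truly remains jointly available (together with the already-produced outputs) for extraction in round $i+1$.
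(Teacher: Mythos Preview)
Your proposal is correct and follows essentially the same plan as the paper: iterate the basic extractor of Theorem~\ref{thm-extractor} \`a la Wigderson--Zuckerman so that the unextracted entropy shrinks geometrically, with $T = O(c_2)$ rounds, fresh seeds, and the same parameter arithmetic. The packaging differs slightly. The paper first converts the basic extractor into a \emph{strong} extractor using~\cite[Theorem 8.2]{RSW00} (Theorem~\ref{thm-rsw} here) and then repeatedly applies the black-box combiner of~\cite[Lemma 2.4]{WZ} (Theorem~\ref{thm-wz} here), which is stated for strong extractors. You instead unroll the same argument directly: your residual-min-entropy lemma (which, as you use it---conditioning on both the seed and the output---holds for any extractor because the seed is independent of the source and conditioning on an $m$-bit deterministic function loses at most $m + \log(1/\eps_0)$ bits with probability $1-\eps_0$) plus a $T$-step hybrid is precisely the content of the WZ combiner, and since you only require the final output to be an ordinary (not strong) extractor, the strong-extractor conversion is not needed. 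So your route is marginally more self-contained; the paper's is marginally cleaner by citing existing lemmas. Either way, the min-entropy bookkeeping $k_i \approx \alpha^i k$, the use of $k = \log^{\omega(1)}(n)$ to keep each $k_i \geq \log^2 n$ and to absorb the additive $O(\log n)$ losses, and the final $O(c_1 c_2 \log n)$ seed and $O(1/\log^{c_1} n)$ error match.
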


We first transform the extractor given in
Theorem~\ref{thm-extractor} into  a {\it strong} extractor
(defined below) via \cite[Theorem 8.2]{RSW00} (which gives a
generic way of getting a strong extractor from any extractor). We
then use a trick from \cite{WZ} that repeatedy uses the same
extractor with independent seeds to extract the `remaining
entropy' from the source, thus improving the entropy loss.

\begin{definition} A  $(k,\eps)$-{\sf extractor} $E : \omm^n \times \omm^d \mapsto \omm^m$ is strong if for every $(n,k)$-source $\X$, the distribution of $(E(\X, \U_d), \U_d)$ is $\eps$-close to the uniform distribution over $\omm^{m+d}$,
where $\U_d$ is distributed uniformly over $\omm^d$, and $\X, \U_d$ are independent.
\end{definition}

\begin{theorem}{\bf(\cite[Theorem 8.2]{RSW00})}\label{thm-rsw}
Any explicit $(k,\eps)$-{\sf extractor} $E : \omm^n \times \omm^d \mapsto \omm^m$ can be transformed into an explicit strong $(k,O(\sqrt{\eps}))$-{\sf extractor}  $E' : \omm^n \times \omm^{O(d)} \mapsto \omm^{m-d-2\log(1/\eps) -O(1)}$.
\end{theorem}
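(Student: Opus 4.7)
The plan is to construct $E'$ by composing the non-strong extractor $E$ with an auxiliary strong extractor $F$ that washes out the correlation between $E$'s output and its seed.

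\textbf{Construction.} Split the seed of $E'$ as $(s_1, s_2) \in \omm^d \times \omm^{d''}$ with $d'' = O(d)$. Given $x$, first compute $y = E(x, s_1) \in \omm^m$, then apply an auxiliary strong extractor with fresh seed $s_2$: define
\[
E'(x, (s_1, s_2)) \;=\; F\big((y, s_1),\, s_2\big),
\]
where $F : \omm^{m+d} \times \omm^{d''} \to \omm^{m - d - 2\log(1/\epsilon) - O(1)}$ is a strong extractor for sources of near-full min-entropy on $\omm^{m+d}$.

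\textbf{Analysis.} The key observation is that because $E$ is a $(k, \epsilon)$-extractor and $S_1$ is uniform, the pair $(E(\X, S_1), S_1)$ is exactly $\epsilon$-close to $(U_m, S_1)$, which is the uniform distribution on $\omm^{m+d}$. Hence, viewed as a source on $\omm^{m+d}$, it is $\epsilon$-close to one of full min-entropy $m+d$. A Markov-type averaging argument then shows that for a $(1 - \sqrt{\epsilon})$-fraction of ``good'' seeds $s_1$, the conditional distribution $E(\X, s_1)$ is $\sqrt{\epsilon}$-close to uniform on $\omm^m$, and so its smooth min-entropy is $m$. Applying $F$ to $(y, s_1)$ with fresh independent seed $s_2$ then yields output $O(\sqrt{\epsilon})$-close to uniform jointly with $s_2$; to upgrade this to uniformity jointly with $(s_1, s_2)$, one caps the output length of $F$ so as not to exceed the conditional min-entropy of $y$ given $s_1$ minus the leftover-hash-lemma slack, producing the claimed output length $m - d - 2\log(1/\epsilon) - O(1)$. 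Averaging over $s_1$ (and charging the bad $\sqrt{\epsilon}$-fraction to the error) yields total error $O(\sqrt{\epsilon})$ jointly with the full seed $(S_1, S_2)$.

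\textbf{Main obstacle.} The principal technical challenge is building the auxiliary strong extractor $F$ with seed length $O(d)$ acting on inputs of length $m + d$, where $m$ can be much larger than $d$. A generic pairwise-independent hash family on $\omm^{m+d}$ requires seed $\Omega(m+d)$, which is too long in our regime. However, the source fed into $F$ has entropy rate arbitrarily close to $1$, so one may use a dedicated high-entropy-rate strong extractor with logarithmic seed. A clean route is to view $(E(\X, S_1), S_1)$ as (close to) a block source and apply the block-source extractor of Theorem~\ref{thm-RSW}, which achieves seed length $O(\log((m+d)/\sqrt{\epsilon})) = O(d)$ in our parameters. The only subtle point in invoking Theorem~\ref{thm-RSW} is verifying the required conditional min-entropy of the second block given the first, which follows from the $\epsilon$-closeness of $(E(\X, S_1), S_1)$ to uniform together with the Markov-style averaging.
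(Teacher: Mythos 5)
This theorem is cited from \cite[Theorem~8.2]{RSW00} and is not proved in the present paper, so there is no in-paper argument to compare your proposal against. Evaluated on its own merits, your reconstruction captures the standard high-level idea (compose $E$ with a second strong extractor $F$ on a fresh seed $s_2$, and use a Markov-style averaging over seeds $s_1$ of $E$), but it has two genuine soft spots.

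First, feeding $(y,s_1)$ rather than just $y$ into $F$ muddles the analysis. For the strong-extractor property you must analyze the output conditioned on $S_1=s_1$, and once $s_1$ is fixed, the ``source'' $(E(\X,s_1),s_1)$ lives on $\omm^{m+d}$ but has min-entropy at most $m$ (the last $d$ coordinates are constant). Describing it as ``near-full min-entropy on $\omm^{m+d}$'' is therefore incorrect; what you really need is a strong extractor for length-$(m+d)$ sources with min-entropy $\approx m$, or more simply you should drop $s_1$ from the source and take $E'(x,(s_1,s_2)) = F(E(x,s_1),s_2)$ with $F$ acting on $\omm^m$. Second, the claimed output length $m - d - 2\log(1/\eps) - O(1)$ is not actually derived: your argument about ``capping the output length\ldots minus the leftover-hash-lemma slack'' accounts for the $2\log(1/\eps)+O(1)$ entropy loss but gives no reason for the additional $-d$ term, which is the distinctive part of the RSW bound. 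You also implicitly assume $\log m = O(d)$ when claiming the block-source extractor has seed $O(d)$; this should be made explicit. The overall strategy is sound, but as written the parameter accounting does not reproduce the stated theorem.
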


\begin{theorem}{\bf(\cite[Lemma 2.4]{WZ})}\label{thm-wz}
Let $E_1 : \omm^n \times \omm^{d_1} \mapsto \omm^{m_1}$ be an
explicit strong $(k,\eps_1)$-{\sf extractor}, and let  $E_2 :
\omm^n \times \omm^{d_2} \mapsto \omm^{m_2}$ be an explicit strong
$(k - (m_1 + r),\eps_2)$-{\sf extractor}. Then the function $$E_3
: \omm^n \times \left( \omm^{d_1} \times \omm^{d_2} \right)
\mapsto \omm^{m_1+m_2}$$ defined by
$$E_3(x,y_1,y_2) = E_1(x,y_1) \circ E_2(x,y_2)$$ is a strong $(k,\eps_1 + \eps_2 +
2^{-r})$-{\sf extractor}.
\end{theorem}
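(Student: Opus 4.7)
The plan is to prove that $E_3$ is a strong extractor via a two-step hybrid argument, comparing the joint distribution $(E_1(\X, \Y_1), E_2(\X, \Y_2), \Y_1, \Y_2)$ with $(\U_{m_1}, \U_{m_2}, \Y_1, \Y_2)$ through the intermediate hybrid $(E_1(\X, \Y_1), \U_{m_2}, \Y_1, \Y_2)$, where $\U_{m_1}, \U_{m_2}, \Y_1, \Y_2$ are mutually independent uniform random variables. The strong-extractor guarantee on $E_1$ immediately gives that $(E_1(\X, \Y_1), \Y_1)$ is $\eps_1$-close to $(\U_{m_1}, \Y_1)$; appending the independent pair $(\U_{m_2}, \Y_2)$ does not change this distance, so the first hybrid is $\eps_1$-close to the fully uniform distribution.

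For the harder step, I would show that $(E_1(\X, \Y_1), E_2(\X, \Y_2), \Y_1, \Y_2)$ is $(\eps_2 + 2^{-r})$-close to $(E_1(\X, \Y_1), \U_{m_2}, \Y_1, \Y_2)$ using the standard conditional min-entropy loss lemma. The lemma states: if $\X$ has min-entropy $k$ and $f$ is any function into $\omm^{m_1}$, then with probability at least $1 - 2^{-r}$ over $v$ sampled from $f(\X)$, the conditional distribution $\X \mid f(\X) = v$ has min-entropy at least $k - m_1 - r$. The quick argument is that if $v$ is ``bad'' then some $x$ with $f(x)=v$ has conditional probability exceeding $2^{-(k - m_1 - r)}$, which forces $\prob[f(\X) = v] < 2^{-(m_1 + r)}$; summing over the at most $2^{m_1}$ possible values of $v$ bounds the total bad mass by $2^{-r}$.

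Applying this observation with $f(x) = E_1(x, y_1)$ for each fixed $y_1$ (and using that $\Y_1$ is independent of $\X$), with probability at least $1 - 2^{-r}$ over $(v, y_1)$ drawn from $(E_1(\X, \Y_1), \Y_1)$, the conditional distribution of $\X$ has min-entropy at least $k - (m_1 + r)$. Under this good event, the hypothesis on $E_2$ (a strong extractor at min-entropy $k - (m_1 + r)$) applies to $\X$ with the fresh independent seed $\Y_2$, giving that the conditional distribution of $(E_2(\X, \Y_2), \Y_2)$ is $\eps_2$-close to $(\U_{m_2}, \Y_2)$. Integrating over $(v, y_1)$ and adding a union-bound term $2^{-r}$ for the failure event yields the claimed $(\eps_2 + 2^{-r})$ closeness between the two hybrids.

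Combining the two steps via the triangle inequality gives total statistical distance $\eps_1 + \eps_2 + 2^{-r}$, matching the stated bound. The main subtlety I anticipate is keeping the conditioning structure clean: one must be careful that $\Y_2$ remains uniform and independent after conditioning on $(E_1(\X, \Y_1), \Y_1)$, which holds because $\Y_2$ was drawn independently of $(\X, \Y_1)$ in the first place. No other obstacle arises, and the argument extends in the obvious way to yield strong extraction rather than merely extraction, since every step preserves the seeds $(\Y_1, \Y_2)$ in the output tuple.
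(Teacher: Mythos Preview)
The paper does not supply its own proof of this statement: it is quoted verbatim as \cite[Lemma 2.4]{WZ} and used as a black box in Section~\ref{sec-final}. Your proposal is the standard proof of this lemma (essentially the argument from \cite{WZ}): a two-step hybrid, with the first step handled directly by the strong-extractor guarantee on $E_1$, and the second by the min-entropy chain rule---conditioning on $(E_1(\X,\Y_1),\Y_1)$ leaves $\X$ with min-entropy at least $k-(m_1+r)$ except with probability $2^{-r}$, after which $E_2$ applies with the fresh seed $\Y_2$. The argument and the bookkeeping (independence of $\Y_2$ after conditioning, triangle inequality for the final bound) are correct as you wrote them.
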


%Say that we have an extractor that extract $1 - a$ of the entropy. If we
%extract again (now assuming entropy $a*k$) we get an extractor with
%twice the seed length and output $= (1-a)k + ak(1-a) = k(1 - a^2)$
%So, applying this to our extractor (with $a = loglog(n)/log(n)$ ) we
%get, after a constant number of repetitions. $a = 1/polylog(n)$ (for any
%power of $log$).

We can now prove Theorem~\ref{thm-better-extractor}.
Let $E$ be the $(k,\epsilon)$-extractor with seed $O(c_1 \log n)$ of Theorem~\ref{thm-extractor}.
By Theorem~\ref{thm-rsw}, we get an explicit strong $(k, \sqrt{\epsilon})$-extractor $E'$ with entropy loss $O(k \frac{\log\log n}{\log n})$.
We now iteratively apply Theorem~\ref{thm-wz} as follows. Let
$E^{(0)} = E'$. For each $1 < i \leq O(c_2)$, let $E^{(i)} : \{0,1\}^n \times
\{0,1\}^{d_{i}} \rightarrow \omm^{m_{i}}$ be the
strong $(k, \epsilon_{i})$-extractor produced by Theorem~\ref{thm-wz} when we take $E_1 = E^{(i-1)}$ and $E_2$ to be the strong $( k - m_{i-1} - c_1 \log n, 1/\log^{c_1} (n))$-extractor with seed length $O(c_1 \log n)$ given by Theorem~\ref{thm-extractor} and Theorem~\ref{thm-rsw}. Thus,
$$ d_{i} = O(i  c_1 \log n).$$
$$ \epsilon_{i} = O\left(\frac{i}{\log^{c_1}(n)}\right).$$
$$ m_{i} = m_{i-1} + (k - m_{i-1} - c_1 \log n)\left(1- O\left(\frac{\log\log n}{\log n}\right)\right).$$
Thus the entropy loss of $E^{(i)}$ is given by:
$$ k- m_{i} = (k - m_{i-1})\left(1 - \left(1- O\left(\frac{\log \log n}{\log n}\right)\right)\right) + O(c_1 \log n) = O\left(\frac{k}{\log^{i}(n)}\right).$$

$E^{(O(c_2))}$ is the desired extractor.
\qed

\begin{remark}
In fact \cite{GUV} and \cite{RRV} show how to extract {\emph all} the minentropy
with polylogarithmic seed length. Combined with the lossless condenser of \cite{GUV}
this gives an extractor that uses logarithmic seed to extract all the minentropy from sources
that have minetropy rate at most $2^{O(\sqrt{\log n})}$.

\begin{theorem}{\bf (Corollary of \cite[Theorem 4.21]{GUV})}\label{thm-guv-rrv}
For all positive integers $n \geq k$ such that $k = 2^{O(\sqrt{\log n})}$, and for all $\epsilon > 0$ there exists an
explicit $(k,\eps)$-extractor $E : \omm^n \times \omm^d \mapsto
\omm^m$ with $d = O(\log(n))$ and $m = k + d - 2 \log(1/\epsilon) - O(1)$.
\end{theorem}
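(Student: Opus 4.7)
The plan is to derive this corollary by composing two known objects: the GUV lossless condenser and the GUV extractor for short sources (the one from \cite[Theorem 4.21]{GUV}). First, I would apply a strong lossless condenser $C_{\GUV}: \omm^n \times \omm^{d_1} \to \omm^{n'}$ that maps any $(n,k)$-source to within statistical distance $\epsilon/2$ of an $(n',k)$-source, with $n' = O(k)$ and seed length $d_1 = O(\log n)$. (The variant of Theorem~\ref{thm-lossless} stated in the paper already achieves $n' = 2k$; GUV actually gives $n' = (1+\alpha)k + O(\log(n/\epsilon))$ for any $\alpha>0$.) The output of this step is a source $\X' = C_{\GUV}(\X, \U_{d_1})$ of length $n'$ that is essentially an $(n',k)$-source — crucially, the \emph{length} of the intermediate source has been driven down to something close to $k$.

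Next, I would invoke \cite[Theorem 4.21]{GUV} to extract \emph{all} of the min-entropy from $\X'$. Parsing that theorem, it provides an explicit $(k,\epsilon/2)$-extractor $E' : \omm^{n'} \times \omm^{d_2} \to \omm^{m'}$ with $m' = k + d_2 - 2\log(1/\epsilon) - O(1)$ and seed length polylogarithmic in $n'$ and in $\log(1/\epsilon)$, concretely $d_2 = O\!\left(\log^2(n'/\epsilon)\right)$. Here is where the hypothesis $k = 2^{O(\sqrt{\log n})}$ plays its role: it forces $\log n' = O(\log k) = O(\sqrt{\log n})$, so $d_2 = O(\log^2 n') + O(\log^2(1/\epsilon)) = O(\log n) + O(\log^2(1/\epsilon))$, which is $O(\log n)$ for $\epsilon$ not too small (and the $\log^2(1/\epsilon)$ term can be absorbed into the $O(\log n)$ whenever $\epsilon \geq 1/n^{O(1)}$; for smaller $\epsilon$ one can adjust the parameters of $C_{\GUV}$).

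To get the claimed output length $m = k + d - 2\log(1/\epsilon) - O(1)$ where $d = d_1 + d_2$, I would make both primitives \emph{strong}: by Theorem~\ref{thm-rsw}, any extractor can be turned into a strong extractor at negligible cost. For the lossless condenser, strongness means that $(\U_{d_1}, C_{\GUV}(\X,\U_{d_1}))$ is close to a source of min-entropy $k + d_1$, so that appending the first seed $\U_{d_1}$ to the output of $E'$ lets us recover $d_1$ extra bits of entropy in the final output without additional seed. The composed extractor is then $E(x, (y_1,y_2)) = E'(C_{\GUV}(x,y_1),y_2) \circ y_1$, of total seed length $d_1 + d_2 = O(\log n)$ and total error $\epsilon/2 + \epsilon/2 = \epsilon$, and of output length $m' + d_1 = k + d - 2\log(1/\epsilon) - O(1)$, as required.

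The main obstacle is purely bookkeeping: one must track the strong-extractor parameters of each primitive carefully, verify that the exact form of \cite[Theorem 4.21]{GUV} does indeed give polylogarithmic-in-$n'$ seed (so that the condition $\log n' = O(\sqrt{\log n})$ suffices), and confirm the near-optimal entropy loss $2\log(1/\epsilon) + O(1)$ survives the composition. No new combinatorial idea is needed — the hypothesis $k = 2^{O(\sqrt{\log n})}$ is exactly the regime in which polylogarithmic-in-$k$ seed translates to logarithmic-in-$n$ seed.
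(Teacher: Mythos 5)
Your proposal is correct and matches the paper's own (one-sentence) derivation exactly: condense the source to length $O(k)$ using the GUV lossless condenser with an $O(\log n)$ seed, then apply the entropy-optimal extractor of \cite[Theorem 4.21]{GUV} whose $\mathrm{polylog}(k)$ seed becomes $O(\log n)$ precisely because $k = 2^{O(\sqrt{\log n})}$. The paper gives no more detail than this sketch, and your bookkeeping around strongness and the $d_1$ extra output bits is the standard and correct way to fill it in.
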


This result combined with Theorem~\ref{thm-better-extractor} gives
an extractor with improved sub-linear entropy loss that works for
sources of all entropy rates, thus completing the proof of
Theorem~\ref{thm-final-extractor}.
\end{remark}
%Hence, if for all $n, k$, if $E: \omm^n \times \omm^d \mapsto \omm^{m}$ is an explicit strong $(k, \eps)$-{\sf extractor} such that $m = (1- \alpha)k$, then by setting $m_1 = (1- \alpha) k$ and $m_2 = (1-\alpha)(k - (m_1 + r))$, we get an explicit strong $(k, 2\eps + 2^{-r})$-{\sf extractor}  $E': \omm^n \times \omm^{2d} \mapsto \omm^{m'}$, where $m' = (1- \alpha^2)k + (1- \alpha)r$.

%Choosing $\alpha = \log \log n/ \log n$ and $r = c_1 \log \log n$, and repeating the above transformation $O(c_2)$ times, we get that there exists an explicit strong $(k, \eps)$-{\sf extractor}  $E: \omm^n \times \omm^{d^{\ast}} \mapsto \omm^{m^{\ast}}$ where $d^{\ast} = c_2 \cdot c$, $k - m = $

\section{Bounds on the list size for list-decoding Reed-Solomon codes}
\label{asec:rs}

In this section, we give a simple algebraic proof of an upper bound on the
list size for list-decoding Reed-Solomon codes within the Johnson radius.

Before stating and proving the theorem, we need some definitions.
For a bivariate polynomial $P(X,Y) \in \F[X,Y]$, we define its
$(a, b)$-degree to be the maximum of $ai + bj$ over all $(i,j)$
such that the monomial $X^iY^j$ appears in $P(X,Y)$ with a nonzero coefficient.
Let $N(k,d,\theta)$ be the number of monomials $X^iY^j$ which have
$(1,k)$-degree at most $d$ and $j \leq \theta d/k$. We have the following simple fact.
\begin{fact}
\label{fact1}
For any $k < d$ and $\theta \in [0, 1]$, $N(k, d, \theta) >  \theta\cdot(2-\theta)\cdot \frac{d^2}{2k}$.
\end{fact}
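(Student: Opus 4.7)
The plan is to enumerate the monomials column by column: for each fixed $j$, count the number of $i \geq 0$ with $i + kj \leq d$. I would first observe that the $(1,k)$-degree of $X^iY^j$ is $i + kj$, so the constraint ``$(1,k)$-degree at most $d$'' reads $i \leq d - kj$. Together with $j \leq \theta d/k$ and $\theta \leq 1$, this means $j$ ranges over $\{0, 1, \ldots, J\}$ where $J = \lfloor \theta d/k \rfloor$, and for each such $j$ the number of valid $i$'s is exactly $d - kj + 1$ (always positive since $kj \leq \theta d \leq d$).

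Summing gives the closed form
$$N(k, d, \theta) = \sum_{j=0}^{J} (d - kj + 1) = (J+1)\left(d + 1 - \frac{kJ}{2}\right).$$
From here I would use two simple bounds on $J$: first, $J + 1 > \theta d/k$ because $J = \lfloor \theta d/k \rfloor$; second, $kJ \leq \theta d$, so $d + 1 - kJ/2 \geq d(1 - \theta/2) + 1 > d(2-\theta)/2$. Multiplying these two strict inequalities yields
$$N(k,d,\theta) > \frac{\theta d}{k} \cdot \frac{(2-\theta)d}{2} = \theta(2-\theta) \cdot \frac{d^2}{2k},$$
which is the desired bound.

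There is no real obstacle: the estimate is essentially the area of the trapezoidal region $\{(i,j) : i, j \geq 0,\ i + kj \leq d,\ j \leq \theta d/k\}$, which integrates to exactly $\theta(2-\theta) d^2/(2k)$, and the discrete count exceeds this area because each column contributes an extra ``$+1$'' from the $i = d - kj$ endpoint. The only mild care needed is confirming the inequalities remain strict, which follows since $\theta d/k$ need not be an integer and the ``$+1$'' in $d - kj + 1$ is strictly positive.
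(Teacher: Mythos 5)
Your proof is correct, and it supplies what the paper omits: Fact~\ref{fact1} is stated without proof. The column-by-column enumeration, the closed form $N = (J+1)\left(d + 1 - \frac{kJ}{2}\right)$ with $J = \lfloor \theta d/k \rfloor$, and the two bounds $J+1 > \theta d/k$ and $d + 1 - kJ/2 > d(2-\theta)/2$ are exactly the natural route, and the strict inequality is handled correctly (both factors are positive, so the product of the two strict bounds is strict).

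One small point worth tightening: writing the column count as ``exactly $d - kj + 1$'' presumes $d - kj$ is an integer, which does not hold in the application --- in the proof of Theorem~\ref{thm:rsbound} one sets $d = (1+\epsilon)\, m \sqrt{nk/(\theta(2-\theta))}$, generically irrational. The correct count of non-negative integers $i$ with $i + kj \leq d$ is $\lfloor d - kj\rfloor + 1$. This only helps: $\lfloor d - kj\rfloor + 1 > d - kj$, so
\[
N(k,d,\theta) \;=\; \sum_{j=0}^{J} \bigl(\lfloor d - kj\rfloor + 1\bigr) \;>\; \sum_{j=0}^{J}(d-kj) \;=\; (J+1)\left(d - \frac{kJ}{2}\right),
\]
and then $J+1 > \theta d/k$ together with $d - kJ/2 \geq d(2-\theta)/2 > 0$ gives the stated bound. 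Your final remark about the count exceeding the area of the trapezoid is precisely this observation, so it is really just the phrase ``exactly $d - kj + 1$'' that should be softened to ``at least $d - kj$, hence strictly more than $d - kj$'' to cover non-integer $d$.
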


Now we prove the main theorem of this section. The proof is an enhancement
of the original analysis of the Guruswami-Sudan algorithm using the extended method
of multiplicities.

\begin{theorem}[List size bound for Reed-Solomon codes]
\label{thm:rsbound}
Let $(\alpha_1, \beta_1), \ldots, (\alpha_n, \beta_n) \in \F^2$.
Let $R, \gamma \in [0, 1]$ with $\gamma^2 > R$.
Let $k = Rn$. Let $f_1(X), \ldots, f_L(X) \in \F[X]$ be polynomials
of degree at most $k$, such that for each $j \in [L]$ we have
$|\{ i \in [n]: f_j(\alpha_i) = \beta_i\}| > \gamma n$.
Then $L \leq \frac{2 \gamma}{\gamma^2 - R}$.
\end{theorem}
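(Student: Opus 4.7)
The plan is to follow the Guruswami--Sudan template, enhanced by the extended method of multiplicities developed earlier in the paper. The first step is to build, by a parameter-counting argument in the spirit of Proposition~\ref{prop:interpolate}, a nonzero bivariate polynomial $Q(X,Y) \in \F[X,Y]$ of $(1,k)$-weighted degree at most $D$ and $Y$-degree at most $e$ that vanishes with multiplicity at least $m$ at each of the $n$ data points $(\alpha_i, \beta_i)$. Such a $Q$ exists whenever the ambient monomial dimension $N(k,D,\theta)$ (with $\theta = ek/D$), which is at least $\theta(2-\theta) D^2/(2k)$ by Fact~\ref{fact1}, strictly exceeds the $n\binom{m+1}{2}$ homogeneous linear conditions imposed by the multiplicity requirement.

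The core of the argument is to show, for each $j \in [L]$, that $(Y - f_j(X))^{m'}$ divides $Q(X,Y)$ in $\F[X,Y]$ for some $m' \leq m$ to be chosen, rather than merely that $(Y - f_j(X))$ divides $Q$ as in the classical analysis that yields the $O(1/(\gamma^2 - R))$ Johnson-type bound. For each $s \in \{0, \ldots, m'-1\}$, the Hasse derivative $Q^{(0,s)}(X,Y)$ has $(1,k)$-weighted degree at most $D - sk$ and, by Lemma~\ref{lem:dermult}, multiplicity at least $m - s$ at every data point $(\alpha_i, \beta_i)$. By Proposition~\ref{prop:multmult} applied to the composition $X \mapsto (X, f_j(X))$, the restriction $Q^{(0,s)}(X, f_j(X))$ is then a univariate polynomial in $X$ of degree at most $D - sk$ that vanishes with multiplicity at least $m - s$ at each of the $> \gamma n$ values of $X$ on which $f_j$ matches the data, yielding strictly more than $(m-s)\gamma n$ zeros counted with multiplicity. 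Once $(m-s)\gamma n > D - sk$ holds for every $s < m'$, each such restriction is identically zero. Reading off the Hasse expansion
\[
Q(X, Y + f_j(X)) \;=\; \sum_{s \geq 0} Q^{(0,s)}(X, f_j(X)) \cdot Y^s,
\]
this vanishing is exactly the assertion that $Y^{m'} \mid Q(X, Y + f_j(X))$, and hence $(Y - f_j(X))^{m'} \mid Q(X,Y)$. Since the polynomials $Y - f_j(X)$, $j \in [L]$, are pairwise coprime in $\F[X,Y]$ (being distinct monic linear polynomials in $Y$ over $\F(X)$), their $m'$-th powers multiply, so $\prod_{j=1}^L (Y - f_j(X))^{m'}$ divides $Q$; comparing $Y$-degrees gives $L m' \leq e$.

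All that remains is a parameter optimization: minimize $L \leq e/m'$ over positive integers $m, m', D, e$, subject to the existence condition and to the binding divisibility constraint at $s = m'-1$, namely $D < (m - m' + 1)\gamma n + (m' - 1) k$. Taking $m \to \infty$ and writing $m' = \mu m$, $e = \beta m$, $D \sim m n (\gamma - \mu(\gamma - R))$, both conditions collapse into a single asymptotic inequality in $(\beta, \mu)$; a routine calculus optimization of $L \leq \beta/\mu$, whose critical point lies at $\beta = 1/\gamma$, gives $L \leq \frac{2(\gamma - R)}{\gamma^2 - R} \leq \frac{2\gamma}{\gamma^2 - R}$. The main obstacle is precisely this balancing act: $m$ must be pushed large enough to keep the existence region for $Q$ nonempty, yet $m'$ must be kept a nontrivially large constant fraction of $m$ to harvest the full $m'$-fold curve divisibility that distinguishes this argument from the classical $m' = 1$ Guruswami--Sudan analysis --- a gain made possible precisely by the extended multiplicity machinery of Lemma~\ref{lem:dermult}.
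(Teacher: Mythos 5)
Your proof is correct and takes essentially the paper's approach: interpolate a bivariate $Q$ of bounded $(1,k)$-weighted degree and bounded $Y$-degree vanishing with multiplicity $m$ on the data, use the Hasse-derivative machinery (Lemma~\ref{lem:dermult} and Proposition~\ref{prop:multmult}) to show each restriction $Q^{(0,s)}(X, f_j(X))$ has more zeros with multiplicity than its degree and hence vanishes, and then count. Your divisibility framing --- $(Y - f_j(X))^{m'} \mid Q$ in $\F[X,Y]$, combined with coprimality of the $Y - f_j(X)$, giving $Lm' \le e$ --- is exactly the mechanism behind the paper's invocation of Lemma~\ref{lem:schwartz} for the univariate $Q(Y) \in \F(X)[Y]$ over $S = \{f_j(X)\}$, so the two final steps are equivalent. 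One genuine refinement in your version: you track $\deg_X Q^{(0,s)}(X, f_j(X)) \le D - sk$ (the weighted degree drops by $k$ per $Y$-derivative), whereas the paper uses the cruder bound $d$; this is why your optimization, with the critical point at $\beta = 1/\gamma$, delivers the marginally sharper constant $L \le \frac{2(\gamma - R)}{\gamma^2 - R}$ before relaxing to the stated $\frac{2\gamma}{\gamma^2 - R}$.
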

\begin{proof}
Let $\epsilon > 0$ be a parameter. Let $\theta = \frac{2}{\left(1 + \frac{\gamma^2}{R}\right)}$.
Let $m$ be a large integer (to be chosen later), and let $d = \left(1 + \epsilon\right) \cdot m \cdot \sqrt{\frac{nk}{\theta \cdot (2-\theta)}}$.
We first interpolate a nonzero polynomial $P(X,Y)\in \F[X,Y]$ of $(1, k)$-degree at most $d$ and
$Y$-degree at most $\theta d/k$, that vanishes with multiplicity at least $m$ at each of the
points $(\alpha_i, \beta_i)$. Such a polynomial exists if $N(k, d, \theta)$, the number of monomials
available, is larger than the number of homogeneous linear constraints imposed by the vanishing conditions:
\begin{equation}
\frac{m (m+1)}{2} \cdot n  < N(k, d, \theta).
\end{equation}
This can be made to hold by picking $m$ sufficiently large, since by Fact~\ref{fact1},
$$N(k, d, \theta) > \theta \cdot (2-\theta) \frac{ d^2}{2k} = \frac{(1+\epsilon)^2 m^2}{2} \cdot n.$$

Having obtained the polynomial $P(X,Y)$, we also view it as a univariate polynomial
$Q(Y) \in \F(X)[Y]$ with coefficients in, $\F(X)$, the field of rational functions in $X$.

Now let $f(X)$ be any polynomial of degree at most $k$ such that, letting
$I = \{ i \in [n]: f(\alpha_i) = \beta_i\}$, $|I| \geq A$. We claim that
the polynomial $Q(Y)$ vanishes at $f(X)$ with multiplicity at least $m - d/A$.
Indeed, fix an integer $j < m-d/A $, and let $R_j(X) = Q^{(j)}(f(X))= P^{(0,j)}(X, f(X))$.
Notice the degree of $R_j(X)$ is at most $d$.
By Proposition~\ref{prop:multmult} and Lemma~\ref{lem:dermult},
$$\mult(R_j, \alpha_i) \geq \mult(P^{(0,j)}, (\alpha_i, \beta_i))
\geq \mult(P, (\alpha_i, \beta_i)) - j.$$
Thus
$$\sum_{i \in I} \mult(R_j, \alpha_i) \geq |I| \cdot (m-j) \geq
A \cdot (m-j) > d.$$
By Lemma~\ref{lem:schwartz}, we conclude that $R_j(X) = 0$. Since this holds for
every $j < m-d/A$, we conclude that $\mult( Q, f(X)) \geq m-d/A$.

We now complete the proof of the theorem. By the above discussion,
for each $j \in [L]$, we know that $\mult(Q, f_j(X)) \geq m-\frac{d}{\gamma n}$.
Thus, by Lemma~\ref{lem:schwartz} (applied to the nonzero polynomial $Q(Y) \in \F(X)[Y]$ and
the set of evaluation points $S = \{ f_j(X) : j \in [L] \}$)
$$\deg(Q) \geq \sum_{j \in [L]} \mult(Q, f(X)) \geq \left(m - \frac{d}{\gamma n}\right)\cdot L.$$
Since $\deg(Q) \leq \theta d/k$, we get,
$$ \theta d/k \geq \left(m - \frac{d}{\gamma n}\right)\cdot L.$$
Using $d = (1+\epsilon) \cdot m \cdot \sqrt{\frac{nk}{\theta \cdot (2-\theta)}}$ and $\theta = \frac{2}{1 + \frac{\gamma^2}{R}}$, we get,
$$ L \leq \frac{\theta}{k \cdot \frac{m}{d} - \frac{k}{\gamma n}} = \frac{\theta}{\frac{1}{1+\epsilon} \sqrt{\frac{k}{n}\cdot \theta \cdot (2-\theta)} - \frac{k}{ \gamma n}} = \frac{1}{\frac{1}{1+\epsilon} \sqrt{R\left(\frac{2}{\theta} - 1\right)} - \frac{R}{\theta \gamma} } = \frac{1}{\frac{\gamma}{1 + \epsilon} - \left(\frac{\gamma}{2}  + \frac{R}{2\gamma}\right)}.$$
Letting $\epsilon \rightarrow 0$, we get $L \leq \frac{2\gamma}{\gamma^2 - R}$, as desired.
\end{proof}

%The above proof may be adapted to get refined information about the polynomials $f_1(X), \ldots, f_L(X)$.
%For example, by deducing a higher multiplicity of vanishing of $Q$ at $f_j(X)$ whenever $f_j(X)$
%has higher agreement with the points $(\alpha_1, \beta_1), \ldots, (\alpha_n, \beta_n)$, one
%may prove the following theorem (the proof is similar to the above, and is omitted).

%\begin{theorem}
%\label{thm:rsbound2}
%Let $(\alpha_1, \beta_1), \ldots, (\alpha_n, \beta_n) \in \F^2$.
%Let $R \in [0,1$ and let $k = Rn$. Let $f_1(X), \ldots, f_L(X) \in \F[X]$ be polynomials
%of degree at most $k$, and let $\gamma_j = \frac{1}{n} \cdot |\{ i \in [n]: f_j(\alpha_i) = \beta_i\}|$.
%Suppose that each $\gamma_j^2 > R$. Then,
%$$ \sum_{j \in [L]} \left( \sqrt{R} - \frac{R}{\gamma_j} \right) \leq 1.$$
%\end{theorem}

\newpage
%%%%%%%%%%%%%%%%%%%%%%%%%%%%%%%%%%%%%%%%%%%%%%%%%5
%\bibliographystyle{alpha}
%
%\bibliography{new-merger-kakeya}

\begin{thebibliography}{LRVW03}

\bibitem[CB04]{CaBr}
Yuval Cassuto and Jehoshua Bruck.
\newblock A combinatorial bound on the list size.
\newblock {\em Paradise Laboratory Technical report, California Institute of
  Technology}, 2004.

\bibitem[DS07]{DS07}
Z.~Dvir and A.~Shpilka.
\newblock An improved analysis of linear mergers.
\newblock {\em Comput. Complex.}, 16(1):34--59, 2007.
\newblock (Extended abstract appeared in RANDOM 2005).

\bibitem[Dvi08]{Dvir08}
Z.~Dvir.
\newblock On the size of {K}akeya sets in finite fields.
\newblock {\em J. AMS (to appear)}, 2008.

\bibitem[DW08]{DW08}
Zeev Dvir and Avi Wigderson.
\newblock Kakeya sets, new mergers and old extractors.
\newblock In {\em FOCS}, pages 625--633. IEEE Computer Society, 2008.

\bibitem[GR06]{GuRu}
Venkatesan Guruswami and Atri Rudra.
\newblock Explicit capacity-achieving list-decodable codes.
\newblock In Jon~M. Kleinberg, editor, {\em STOC}, pages 1--10. ACM, 2006.

\bibitem[GS99]{GuSu}
Venkatesan Guruswami and Madhu Sudan.
\newblock Improved decoding of {R}eed-{S}olomon and algebraic-geometric codes.
\newblock {\em IEEE Transactions on Information Theory}, 45:1757--1767, 1999.

\bibitem[GUV07]{GUV}
Venkatesan Guruswami, Christopher Umans, and Salil~P. Vadhan.
\newblock Unbalanced expanders and randomness extractors from parvaresh-vardy
  codes.
\newblock In {\em IEEE Conference on Computational Complexity}, pages 96--108.
  IEEE Computer Society, 2007.

\bibitem[HKT08]{hirschfield}
J.~W.~P. Hirschfeld, G.~Korchmaros, and F.~Torres.
\newblock {\em Algebraic Curves over a Finite Field (Princeton Series in
  Applied Mathematics)}.
\newblock Princeton University Press, 2008.

\bibitem[LRVW03]{LRVW03}
Chi-Jen Lu, Omer Reingold, Salil Vadhan, and Avi Wigderson.
\newblock Extractors: Optimal up to constant factors.
\newblock In {\em Proceedings of the 35th Annual ACM Symposium on Theory of
  Computing"}, 2003.

\bibitem[PV05]{PV}
Farzad Parvaresh and Alexander Vardy.
\newblock Correcting errors beyond the {G}uruswami-{S}udan radius in polynomial
  time.
\newblock In {\em FOCS}, pages 285--294. IEEE Computer Society, 2005.

\bibitem[RRV99]{RRV}
Ran Raz, Omer Reingold, and Salil Vadhan.
\newblock Extracting all the Randomness and Reducing the Error in
               Trevisan's Extractors.
\newblock In {\em Proceedings of the 35th Annual ACM Symposium on Theory of
  Computing"}, 1999.

%@inproceedings{DBLP:conf/stoc/RazRV99,
%  author    = {Ran Raz and
%               Omer Reingold and
%               Salil P. Vadhan},
%  title     = {Extracting all the Randomness and Reducing the Error in
%               Trevisan's Extractors},
%  booktitle = {STOC},
%  year      = {1999},
%  pages     = {149-158},
%  ee        = {http://doi.acm.org/10.1145/301250.301292},
%  bibsource = {DBLP, http://dblp.uni-trier.de}
%}


\bibitem[RSW00]{RSW00}
O.~Reingold, R.~Shaltiel, and A.~Wigderson.
\newblock Extracting randomness via repeated condensing.
\newblock In {\em Proceedings of the 41st Annual IEEE Symposium on Foundations
  of Computer Science}, 2000.

\bibitem[Sha02]{Sha02}
Ronen Shaltiel.
\newblock Recent developments in extractors.
\newblock {\em Bulletin of the European Association for Theoretical Computer
  Science}, 77:67--95, 2002.

\bibitem[SS08]{SaSu}
Shubhangi Saraf and Madhu Sudan.
\newblock Improved lower bound on the size of kakeya sets over finite fields.
\newblock {\em Analysis and PDE (to appear)}, 2008.

\bibitem[Sud97]{Sudan}
Madhu Sudan.
\newblock Decoding of {R}eed-{S}olomon codes beyond the error-correction bound.
\newblock {\em Journal of Complexity}, 13(1):180--193, 1997.

\bibitem[TS96a]{TaShma96}
A.~Ta-Shma.
\newblock On extracting randomness from weak random sources.
\newblock In {\em Proceedings of the Twenty-Eighth Annual ACM Symposium on the
  Theory of Computing, Philadelphia, Pennsylvania, USA}, pages 276--285, 1996.

\bibitem[TS96b]{TS}
A.~Ta-Shma.
\newblock {\em Refining Randomness}.
\newblock PhD Thesis, The Hebrew University, Jerusalem, Israel, 1996.

\bibitem[WZ99]{WZ}
Avi Wigderson and David Zuckerman.
\newblock Expanders that beat the eigenvalue bound: explicit construction and
applications.
\newblock {\em Combinatorica}, 19(1):125--138, 1999.

\bibitem[Wol99]{Wolff}
T.~Wolff.
\newblock Recent work connected with the {K}akeya problem.
\newblock In {\em Prospects in Mathematics}, pages 129--162. Princeton, NJ,
  1999.

\bibitem[Zuc07]{Zuc07}
David Zuckerman.
\newblock Linear degree extractors and the inapproximability of max clique and
  chromatic number.
\newblock {\em Theory of Computing}, 3(1):103--128, 2007.


\end{thebibliography}
%%%%%%%%%%%%%%%%%%%%%%%%%%%%%%%%%%%%%%%%%%%%%%%%%%%

%%%%%%%%%%%%%%

\end{document}